\newcommand{\N}{\ensuremath{\mathbb N}}
\newcommand{\scrs}{\ensuremath{\mathscr S}}
\newcommand{\Z}{\ensuremath{\mathbb Z}}
\newcommand{\R}{\ensuremath{\mathbb R}}
\newcommand{\Q}{\ensuremath{\mathcal Q}}
\newcommand{\qhat}{\ensuremath{\widehat{\mathcal Q}}}
\newcommand{\ptild}{\ensuremath{\widetilde{P}}}
\newcommand{\G}{\ensuremath{\mathcal G}}
\newcommand{\HH}{\ensuremath{\mathcal H}}
\newcommand{\W}{\ensuremath{\mathcal W_{[0,1]}}}
\newcommand{\WW}{\ensuremath{\mathcal W}}
\newcommand{\Wopen}[1]{\ensuremath{\mathcal W}_{(-#1,#1)}}
\newcommand{\Wclosed}[1]{\ensuremath{\mathcal W}_{[-#1,#1]}}
\newcommand{\Wlimit}[1]{\ensuremath{\mathcal W}_{\uparrow #1}}
\newcommand{\cutnorm}[1]{\ensuremath{\| #1 \|_{\rm cut}}}
\newcommand{\boxnorm}[1]{\ensuremath{\| #1 \|_\Box}}
\newcommand{\lie}[1]{\ensuremath{\mathfrak{#1}}}
\newcommand{\novert}[1]{\ensuremath{\widetilde{#1}}}
\newcommand{\comment}[1]{}
\def\img {\mathop{\rm Im}}
\def\Aut{\mathop{\rm Aut}}
\def\Surj{\mathop{\rm Surj}}
\def\Adm{\mathop{\rm Adm}}
\DeclareMathOperator{\cut}{\ensuremath{cut}\xspace}
\newtheorem{theorem}[equation]{Theorem}
\newtheorem{prop}[equation]{Proposition}
\newtheorem{cor}[equation]{Corollary}
\newtheorem{lemma}[equation]{Lemma}
\theoremstyle{definition}
\newtheorem{remark}[equation]{Remark}
\newtheorem{definition}[equation]{Definition}
\numberwithin{equation}{section}
\begin{document}
\title{Differential Calculus on Graphon Space}
\author[Peter Diao, Dominique Guillot, Apoorva Khare, and Bala
Rajaratnam]{Peter Diao, Dominique Guillot, Apoorva Khare, and Bala
Rajaratnam\\Stanford University}
\thanks{P.D., D.G., A.K., and B.R.~are partially supported by the
following: US Air Force Office of Scientific Research grant award
FA9550-13-1-0043, US National Science Foundation under grant DMS-0906392,
DMS-CMG 1025465, AGS-1003823, DMS-1106642, DMS-CAREER-1352656, Defense
Advanced Research Projects Agency DARPA YFA N66001-111-4131, the UPS
Foundation, SMC-DBNKY, and an NSERC postdoctoral fellowship}
\address{Department of Mathematics, Stanford University, Stanford, CA
94305, USA}
\date{\today}

\begin{abstract}
Recently, the theory of dense graph limits has received attention from
multiple disciplines including graph theory, computer science,
statistical physics, probability, statistics, and group theory. In this
paper we initiate the study of the general structure of differentiable
graphon parameters $F$.
We derive consistency conditions among the higher G\^ateaux derivatives
of $F$ when restricted to the subspace of edge weighted graphs $\WW_{\bf
p}$.
Surprisingly, these constraints are rigid enough to imply that the
multilinear functionals $\Lambda: \WW_{\bf p}^n \to \R$ satisfying the
constraints are determined by a finite set of constants indexed by
isomorphism classes of multigraphs with $n$ edges and no isolated
vertices. Using this structure theory, we explain the central role that
homomorphism densities play in the analysis of graphons, by way of a new
combinatorial interpretation of their derivatives. In particular,
homomorphism densities serve as the monomials in a polynomial algebra
that can be used to approximate differential graphon parameters as Taylor
polynomials.
These ideas are summarized by our main theorem, which asserts that
homomorphism densities $t(H,-)$ where $H$ has at most $N$ edges form a
basis for the space of smooth graphon parameters whose $(N+1)$st
derivatives vanish.
As a consequence of this theory, we also extend and derive new proofs of
linear independence of multigraph homomorphism densities, and
characterize homomorphism densities. In addition, we develop a theory of
series expansions, including Taylor's theorem for graph parameters and a
uniqueness principle for series. We use this theory to analyze questions
raised by Lov\'asz, including studying infinite quantum algebras and the
connection between right- and left-homomorphism densities. 
Our approach provides a unifying framework for differential calculus on
graphon space, thus providing further links between combinatorics and
analysis.
\end{abstract}
\maketitle

%{{{1 Section 1 - Introduction
\section{Introduction}\label{S1}

The theory of dense graphs and their limits introduced in \cite{LS:2006}
has attracted much attention recently (see e.g.~\cite{AT:2010, BCL:2010,
BCLSV:2008, BCLSV:2012, CV:2011, CV:2012, ES:2012}).
%The theory provides a language in which graphs of different sizes can be
%treated uniformly.
It has also been observed that limit theories developed in the context of
(i) graphons, (ii) exchangeable arrays of random variables
(\cite{Hoover:1982}), and (iii) metric measure spaces (\cite[Chapter
3$\frac{1}{2}$]{Gromov:2001} and \cite{Vershik:1998}) can often be
translated into each other (see \cite{Aldous:ICM:2010, DJ:2008,
Elek:2012}).
Several questions have benefited from reformulation in this language -
see for instance \cite{Hatami:2010, Lovasz:2011, LS:2011} and
\cite[Chapter 16]{Lovasz:2013}. The monograph \cite{Lovasz:2013} covers
many aspects of this development, including topology and analysis on the
space of graph limits.
Since graphs have become a central abstraction for the modern analysis of
complex systems, the theory has also been used to address applied
questions in the study of estimable graph parameters \cite{LS:2010},
machine learning (\cite{LOGR:2012}), and statistical modelling of
networks \cite{BC:2009, DHJ:2013,RCY:2011}.
It seems that such a language was needed as much for mathematical theory
as for practical application.
%Of course new languages are still needed. In particular, this theory
%does not allow for sparse graphs which arise most often in applications.

The present paper begins the study of functional analysis of dense graph
limits. In order to explain our motivation and results, we first briefly
review dense graph limit theory and set some notation.
By a \textit{graphon} we mean a bounded symmetric measurable function $f:
[0,1]^2 \to [0,1]$. Recall that a finite simple labelled graph $G$ with
vertices $V = \{ 1, 2, \dots, n \}$ is identified with the graphon $f^G$,
defined as follows:
\[ f^G(x,y) = {\bf 1}_{(\lceil nx \rceil, \lceil ny \rceil) \in E} =
\begin{cases}
    1, 	&\text{if $(\lceil nx \rceil, \lceil ny \rceil)$ is an edge in
    $G$,}\\
    0,	&\text{otherwise.}\\
\end{cases} \]

One description of the topology on isomorphism classes of finite simple
graphs in dense graph limit theory is given as follows.
The space $\W$ of all graphons sits inside $\mathcal{W}$, the vector
space of bounded symmetric measurable functions $f: [0,1]^2 \to \R$. The
space $\WW$ has a seminorm called the \textit{cut norm}
\[ \cutnorm{f} := \sup_{S,T \subset [0,1]} \left| \int_{S \times T}
f(x,y)\ dx\ dy \right|\]
where the supremum is taken over all pairs of Lebesgue measurable subsets
$S,T$ of $[0,1]$.
The monoid of measure-preserving maps $\overline{S}_{[0,1]}$ acts on $\W$
by $f^\sigma(x,y) := f(\sigma(x),\sigma(y))$ for $\sigma \in
\overline{S}_{[0,1]}$. Let $S_{[0,1]}$ be the group of invertible
measure-preserving maps and define
\begin{equation*}
\delta_\Box(f,g) = \inf_{\psi \in S_{[0,1]}} \cutnorm{f - g^\psi}.
\end{equation*}

If $G,G'$ are isomorphic finite simple graphs, then $\delta_\Box(f^G,
f^{G'}) = 0$, so $\delta_\Box$ can be used to define graph convergence of
isomorphism classes of finite graphs.  

Another description of the same topology can be given in terms of
homomorphism densities.  Given two finite simple graphs $G = (V(G),
E(G))$, $H = (V(H), E(H))$, let $\hom(H,G)$ denote the number of
edge-preserving maps $: V(H) \to V(G)$. Now define the homomorphism
densities by $t(H,G) = \hom(H,G) / |V(G)|^{|V(H)|}$. A sequence of simple
graphs $(G_n)_{n=1}^\infty$ \textit{left converges} if $t(H,G_n)$
converges for every finite simple graph $H$.  
Intuitively, a sequence of graphs left converges if their edge densities,
triangle densities, etc. all converge when normalized for dense graph
sequences.

More generally, the homomorphism densities are defined for a graphon $f$
and a multigraph $H = (\{ 1, \dots, k \}, E(H))$ by
\begin{equation}\label{Ehom}
t(H,f) := \int_{[0,1]^k} \prod_{(i,j) \in E(H)} f(x_i, x_j)\ d x_1 \cdots
d x_k.
\end{equation}
Note that if $H$ is a graph with no edges, then we define $t(H,f) \equiv
1$.  Moreover, $t(H,f^G) = t(H,G)$ for finite simple graphs $H,G$ so
these two definitions are compatible.

Theorem 3.8 of \cite{BCLSV:2008} asserts the equivalence of left
convergence and convergence in the $\delta_\Box$ topology.  It also
explains that graphons serve as limits to such convergent sequences of
graphs.

\begin{theorem}[Borgs-Chayes-Lov\'asz-S\'os-Vesztergombi,
2008]\label{thm:BCLSV2008}
Let $(W_n)_{n=1}^\infty$ be a sequence of graphons in $\W$. Then the
following are equivalent:
\begin{enumerate}[(i)]
\item $t(H,W_n)$ converges for all finite simple graphs $H$;
\item $W_n$ is a Cauchy sequence in the $\delta_\Box$ metric;
\item there exists $W \in \W$ such that $t(H,W_n) \to t(H,W)$ for all
finite simple graphs $H$.
\end{enumerate}
Furthermore, $t(H,W_n) \to t(H,W)$ for all finite simple graphs $H$ for
some $W \in \W$ if and only if $\delta_\Box(W_n,W) \to 0$.
\end{theorem}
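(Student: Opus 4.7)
The plan is to prove the equivalence by combining two analytic tools: a \emph{counting lemma} bounding $|t(H,f)-t(H,g)|$ in terms of $\delta_\Box(f,g)$, and the Lov\'asz--Szegedy compactness of $(\W,\delta_\Box)$ modulo $\delta_\Box$-null equivalence. The trivial direction (iii) $\Rightarrow$ (i) requires no work.

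First I would prove the counting lemma: for any finite simple graph $H$ and $f,g \in \W$,
\[
|t(H,f) - t(H,g)| \;\leq\; |E(H)| \cdot \cutnorm{f-g},
\]
by a telescoping argument replacing one factor $f(x_i,x_j)$ by $g(x_i,x_j)$ at a time, bounding each replacement using the cut norm of $f-g$ in the $(x_i,x_j)$ coordinates with the other variables held fixed. Since $t(H,\cdot)$ is invariant under measure-preserving rearrangements, this upgrades to $|t(H,f)-t(H,g)| \leq |E(H)|\,\delta_\Box(f,g)$. This yields (ii) $\Rightarrow$ (i) directly, and combined with compactness also yields (ii) $\Rightarrow$ (iii): a $\delta_\Box$-Cauchy sequence has a $\delta_\Box$-limit $W \in \W$, and then the counting bound forces $t(H,W_n) \to t(H,W)$ for every finite simple $H$.

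The substantive direction is deducing $\delta_\Box$-convergence of the whole sequence (hence (ii) and (iii)) from (i). Assuming $t(H,W_n) \to c_H$ for every finite simple $H$, I would extract, by compactness, a $\delta_\Box$-convergent subsequence $W_{n_k} \to W$; the counting lemma then identifies $c_H = t(H,W)$. Any second $\delta_\Box$-subsequential limit $W'$ likewise satisfies $t(H,W') = c_H = t(H,W)$ for all finite simple $H$, so if I can establish that homomorphism densities separate points of the $\delta_\Box$ quotient, then $W' = W$ there, and a standard diagonal argument forces $\delta_\Box(W_n,W) \to 0$. This point-separation (moment determinacy) is the crux; I would handle it by approximating each graphon in cut norm by step functions via the weak regularity lemma, reducing the cut distance between two step functions to a finite linear combination of densities of small bipartite test graphs, and then letting the partition refine.

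The \emph{Furthermore} claim is now a repackaging: one direction is the counting lemma, and the other is the subsequence argument above applied to the prescribed $W$. The main obstacle throughout is the moment-determinacy step, which rests on the combination of the weak regularity lemma and compactness and is substantially deeper than the elementary counting bound powering the easier implications.
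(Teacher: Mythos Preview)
The paper does not prove this theorem at all: it is quoted verbatim as background, attributed to Theorem~3.8 of \cite{BCLSV:2008}, and the companion uniqueness statement (that equality of all $t(H,\cdot)$ forces $\delta_\Box$-equivalence) is likewise imported from \cite{BCL:2010} without argument. So there is no ``paper's own proof'' to compare against.

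Your outline follows the standard architecture --- counting lemma for the easy directions, compactness plus moment uniqueness for the hard one --- and that architecture is correct. The one place where your sketch is underdetermined is the moment-determinacy step. The sentence ``reducing the cut distance between two step functions to a finite linear combination of densities of small bipartite test graphs'' does not describe an argument that actually works as stated: $\delta_\Box$ between two step functions involves a minimum over block relabellings, and there is no direct expression of that minimum as a linear combination of bipartite homomorphism densities. The actual proofs (either the sampling route, where the $t(H,\cdot)$ determine the law of $\mathbb{G}(k,W)$ for all $k$ and one invokes an inverse counting/sampling lemma, or the moment approach of \cite{BCL:2010}) are more delicate than your sketch suggests. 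Since the paper simply cites this as a black box, you are free to do the same; but if you intend to supply a self-contained proof, that paragraph needs to be replaced by a genuine argument rather than a heuristic.
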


The following theorem summarizes the results about uniqueness of graphon
representations in \cite[Theorem 2.1 and Corollary 2.2]{BCL:2010} that we
rely on in this paper.

\begin{theorem}[Borgs-Chayes-Lov\'asz, 2010]
For two graphons $f, g\in \W$, the following are equivalent:
\begin{enumerate}[(i)]
\item the graphons $f$ and $g$ satisfy $\delta_\Box(f,g) = 0$;
\item the graphons $f$ and $g$ satisfy $t(H,f) = t(H,g)$ for all finite
simple graphs $H$;
\item there exists $U \in \W$ and Lebesgue measure-preserving maps $\phi,
\psi: [0,1] \to [0,1]$ such that $U^\phi = f$ and $U^\psi = g$ almost
everywhere.
\end{enumerate}
\end{theorem}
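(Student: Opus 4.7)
The equivalence naturally splits into three implications; two are routine, and one is genuinely hard.

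For (iii) $\Rightarrow$ (ii), the plan is to use the basic invariance of homomorphism densities under measure-preserving pullback: for any multigraph $H$ and any measure-preserving map $\phi$, a change of variables in \eqref{Ehom} gives $t(H, U^\phi) = t(H, U)$. Thus if $U^\phi = f$ and $U^\psi = g$ almost everywhere, then $t(H,f) = t(H,U) = t(H,g)$ for every finite simple graph $H$.

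For (i) $\Leftrightarrow$ (ii), I would leverage Theorem~\ref{thm:BCLSV2008}. Applying the final clause of that theorem to the constant sequence $W_n \equiv f$ yields: $t(H,f) = t(H,g)$ for all $H$ is equivalent to $\delta_\Box(f,g) = \lim_n \delta_\Box(W_n, g) = 0$. (The counting-lemma direction embedded here uses continuity of each $t(H,-)$ in the cut norm, combined with the invariance of $t(H,-)$ under the maps $\psi \in S_{[0,1]}$ appearing in the definition of $\delta_\Box$.)

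The main obstacle is the nontrivial direction (i) or (ii) $\Rightarrow$ (iii), which constructs a common graphon $U$ from which $f$ and $g$ are both pullbacks. My plan here is to pass to a canonical form. Call two points $x,x' \in [0,1]$ \emph{twins} (with respect to a graphon $h$) if $h(x, \cdot) = h(x', \cdot)$ almost everywhere, and call $h$ \emph{twin-free} if this equivalence relation has only null equivalence classes. The strategy is: (a) show every graphon admits a twin-free ``quotient'' $\tils h$, obtained by pushing forward $[0,1]$ along the map to equivalence classes (after choosing a Borel-measurable parametrization of the quotient by $[0,1]$); (b) verify $h$ is a measure-preserving pullback of $\tils h$; (c) prove that twin-free graphons with identical homomorphism densities are equal up to a single measure-preserving automorphism. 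Given (c), we set $U := \tils f$ and, using $t(H, \tils f) = t(H, f) = t(H, g) = t(H, \tils g)$, obtain a measure-preserving map identifying $\tils f$ with $\tils g$, which composes with the canonical projections from (a)(b) to produce $\phi, \psi$.

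The hardest step is (c), the rigidity of twin-free graphons. I would prove it by a moment-style argument: encode each point $x$ by the sequence of ``neighborhood moments'' $(t(H, \tils f; x))_H$ of rooted multigraphs $H$ evaluated with one vertex pinned at $x$, show these moments determine $x$ modulo nullsets in the twin-free setting, and show that matching global densities force the pushforward measures on the moment space to coincide. The measure-preserving bijection between the parameter spaces then comes from a standard isomorphism theorem for standard Borel probability spaces. Everything else is bookkeeping or a direct appeal to Theorem~\ref{thm:BCLSV2008}.
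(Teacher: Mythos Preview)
The paper does not prove this theorem; it is stated with attribution to \cite[Theorem 2.1 and Corollary 2.2]{BCL:2010} as a known result that the paper relies on, with no proof given. So there is no ``paper's own proof'' to compare against.

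Your sketch is a reasonable outline of the original Borgs--Chayes--Lov\'asz argument: the easy directions (iii)$\Rightarrow$(ii) and (i)$\Leftrightarrow$(ii) are handled correctly, and for the hard direction you correctly identify the reduction to twin-free (also called ``almost twin-free'' or ``pure'') graphons and a rigidity statement for those. One caution: step (c) in your plan---showing that twin-free graphons with identical homomorphism densities agree up to a single measure-preserving bijection---is the entire content of the theorem, and the phrase ``moment-style argument'' understates the work involved. The original proof uses a martingale argument and a rather delicate coupling construction; the ``neighborhood moments'' idea you describe is the right starting intuition but making it rigorous (in particular, producing an honest measure-preserving \emph{bijection} rather than just a measure isomorphism of abstract probability spaces, and handling the measurability issues in the quotient construction) requires substantially more than bookkeeping. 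If you intend to actually write out a proof rather than cite \cite{BCL:2010}, be prepared for that step to dominate.
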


We say that two graphons $f \sim g$ are {\it weakly equivalent} if they
satisfy any of the three equivalent conditions in the above theorem.

The basic properties of the metric space $(\W/\sim, \delta_\Box)$ are as
follows:

\begin{enumerate}
\item The countable family of graphons $f^G$ associated with simple
graphs $G$ is dense in $(\W/\sim, \delta_\Box)$ (see \cite{LS:2006}).

\item As a consequence of the Weak Regularity Lemma in graph theory,
$(\W/\sim, \delta_\Box)$ is a compact metric space (see \cite{LS:2007}).
\end{enumerate}

We refer to functions $F : \W \to \R$ which factor through $\W / \sim$ as
{\it class functions}. These are also sometimes referred to in the
literature as ``graphon parameters". We say that a function $F: \W \to
\R$ is continuous if it is continuous with respect to the cut norm on
$\W$ (unless a different topology is specified).  We say that $F:\W/\sim
\to \R$ is continuous if the lift of $F$ to $\W$ is continuous.

%What role do homomorphism densities play in the functional analysis of
%$\W/\sim$? 
%First of all, a homomorphism density is continuous with respect to the
%cut-norm if and only if the underlying multigraph is simple (see
%Proposition \ref{prop:cont_func_char}).  Let $\scrs_t$ denote the linear
%span of homomorphism densities $t(H,-)$ where $H$ is a finite simple
%graph. Since for any two disjoint finite simple graphs $H_1, H_2$, one
%has $t(H_1 \coprod H_2,f) = t(H_1,f) \times t(H_2,f)$, $\scrs_t$ is
%actually an algebra. We show in Proposition \ref{Stone}, that $\scrs_t$
%is dense in the space of continuous class functions in the topology of
%uniform convergence.

The fundamental work on graph limits cited above builds a bridge between
combinatorics and analysis.  In this paper, we exploit the linear
structure of $\WW$ to build a theory of differentiation and further the
understanding of analysis on $\W$.  
Let $d^n F(f;g_1, \dots, g_n)$ denote the (higher order) G\^ateaux
derivative of the function $F : \W \to \R$ at $f \in \W$ along the
directions $g_1, \dots, g_n \in \W$. (See Definition \ref{Dgateaux}.)
The main result of the present paper shows that homomorphism densities
span the complete space of solutions to fundamental differential
equations on $\W$.
%An important consequence is that G\^ateaux-smooth class functions
%can be expanded in Taylor series written in terms of $t(H,f)$ for $H$ a
%multigraph (see Definition \ref{DCnatf} and Theorem \ref{thm:taylor}).
%Therefore we see that homomorphism densities are fundamental to the space
%of both continuous and differentiable functions of $\W/\sim$.

\begin{theorem}\label{thm:main}
Let $F: \mathcal{W}_{[0,1]} \rightarrow \R$ be a class function which is
continuous with respect to the $L^1$ norm and $(N+1)$ times G\^ateaux
differentiable for some $N \geq 0$. Then $F$ satisfies:
\begin{equation}\label{eqn:main}
d^{N+1} F(f; g_1, \dots, g_{N+1}) = 0, \qquad \forall f \in \W,\ g_1,
\dots, g_{N+1} \in \Adm(f),
\end{equation}

\noindent if and only if there exist constants $a_H$ such that
\begin{equation*}
F(f) =\sum_{H \in \HH_{\leq N}} a_H t(H, f).
\end{equation*}

\noindent Moreover, the constants $a_H$ are unique.
(Here, $\HH_{\leq N}$ denotes the set of isomorphism classes of
multigraphs with no isolated vertices and at most $N$ edges, and
$\Adm(f)$ is the set of admissible directions for computing the G\^ateaux
derivative; see Definition \ref{Dadmissible}.)
If in addition $F$ is continuous with respect to the cut-norm, then $a_H
= 0$ if $H \in \HH_{\leq N}$ is not a simple graph.
\end{theorem}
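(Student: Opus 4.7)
\medskip
\noindent\textbf{Proof proposal.} The plan is to prove both implications by leveraging (i) the polynomial nature of $t(H,f)$ in $f$ and (ii) the structural classification of symmetric multilinear functionals on the edge-weighted subspace $\WW_{\bf p}$ promised in the abstract.

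For the easy direction, I would fix $f \in \W$ and admissible directions $g_1,\dots,g_{N+1}$, and expand
\[
t(H, f + \textstyle\sum_i \epsilon_i g_i) = \int_{[0,1]^k} \prod_{(i,j) \in E(H)} \Bigl(f(x_i,x_j) + \textstyle\sum_\ell \epsilon_\ell g_\ell(x_i,x_j)\Bigr)\, d\mathbf{x}.
\]
Multiplying out shows this is a polynomial in $(\epsilon_1,\dots,\epsilon_{N+1})$ of total degree $|E(H)|\leq N$, so the mixed partial $\partial^{N+1}/\partial \epsilon_1\cdots\partial \epsilon_{N+1}$ vanishes at the origin, giving $d^{N+1}t(H,f;g_1,\dots,g_{N+1})=0$. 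By linearity, the same holds for any finite sum $\sum_{H\in\HH_{\leq N}}a_H t(H,f)$.

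For the main (only if) direction, I would proceed in three steps. First, restrict $F$ to the subspace $\WW_{\bf p}$ of edge-weighted graphons. Along any admissible line starting at $f=0$, the assumption $d^{N+1}F\equiv 0$ together with the Taylor-type expansion developed earlier in the paper should force
\[
F(f) \;=\; \sum_{k=0}^{N} \tfrac{1}{k!}\, d^{k}F(0;\underbrace{f,\ldots,f}_{k}), \qquad f \in \WW_{\bf p} \cap \W,
\]
reducing the problem to understanding the symmetric multilinear functionals $\Lambda_k := d^k F(0;\cdot,\ldots,\cdot)$ on $\WW_{\bf p}^{\,k}$. Second, I would invoke the consistency/structure theorem from the earlier part of the paper, which says that any such $\Lambda_k$ arising from a class function is a finite linear combination $\Lambda_k = \sum_H c_H\, t_H(\cdot,\ldots,\cdot)$ indexed by isomorphism classes of multigraphs $H$ with exactly $k$ edges and no isolated vertices, where $t_H$ is the obvious polarization of $t(H,\cdot)$. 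Collecting these and evaluating on the diagonal yields constants $a_H$ with $F(f)=\sum_{H\in\HH_{\leq N}}a_H t(H,f)$ for all $f\in\WW_{\bf p}\cap \W$. Third, I would extend this identity from $\WW_{\bf p}\cap\W$ to all of $\W$ using $L^1$-density of edge-weighted graphons and the hypothesized $L^1$-continuity of $F$ (and of each $t(H,\cdot)$, which is immediate from \eqref{Ehom} by a dominated convergence/telescoping argument).

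Uniqueness of the $a_H$ reduces to the linear independence of multigraph homomorphism densities (also developed earlier in the paper): if $\sum_H a_H t(H,\cdot) \equiv 0$ on $\W$, then all $a_H=0$. Finally, for the cut-norm continuity strengthening, I would show that if $H$ has a repeated edge then $t(H,\cdot)$ fails to be cut-norm continuous: concretely, one can produce a sequence $f_n\in\W$ with $\cutnorm{f_n}\to 0$ but $\int f_n^2\not\to 0$ (e.g.\ suitably normalized random $\pm 1$-patterns), which lifts to a counterexample for any multigraph containing a multi-edge. Assuming $F$ is cut-norm continuous, subtracting the simple-graph part yields a cut-norm continuous function equal to a combination of non-simple multigraph densities; applying the preceding non-continuity and the uniqueness from linear independence forces all multigraph coefficients to vanish.

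The main obstacle I expect is not the Taylor reduction or the density extension, but rather verifying that the hypotheses of the structure theorem for $\Lambda_k$ (the consistency conditions on derivatives of a class function restricted to $\WW_{\bf p}$) are actually met by $d^k F(0;\cdot)$, and carefully tracking which directions are admissible from $f=0$ so that the Taylor expansion above is valid on a large enough subset of $\W$ to make the $L^1$-density extension go through. The last step of the cut-norm case, namely exhibiting an explicit obstruction to cut-norm continuity for every non-simple multigraph, is the other delicate point.
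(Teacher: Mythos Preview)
Your proposal tracks the paper's argument closely for the main implication: the Taylor reduction $F(f)=\sum_{k\le N}\tfrac{1}{k!}\,d^kF(0;f,\ldots,f)$ on $\WW_{\bf p}$, the appeal to the structure theory for the symmetric $S_{[0,1]}$-invariant multilinear functionals $d^kF(0;\cdots)$ (Theorems~\ref{thm:upper} and~\ref{thm:hom_prop}) to write each derivative in terms of homomorphism-density polarizations, the extension by $L^1$-density of $\WW_{\bf p}$, and uniqueness via Corollary~\ref{cor:whitney} are exactly what the paper does. The easy direction is also fine, and the obstacle you flag (checking that $d^kF(0;\cdot)$ satisfies the hypotheses of the structure theorem) is precisely what Proposition~\ref{prop:Cn_consistent} handles.

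The cut-norm strengthening, however, has a real gap in your sketch. You propose to show that each individual non-simple $t(H,\cdot)$ is cut-discontinuous (via a random $\pm 1$ sequence) and then combine this with linear independence to force all non-simple coefficients to vanish. But individual discontinuity does not rule out a nontrivial linear combination $\sum_{H\text{ non-simple}} a_H\, t(H,\cdot)$ being cut-continuous; linear independence tells you such a combination is nonzero as a function on $\W$, not that it is discontinuous. The paper's route (Proposition~\ref{prop:cont_func_char}(ii)) avoids this entirely: on $\{0,1\}$-valued graphons one has $t(H,f)=t(H^{\mathrm{simp}},f)$, so $F$ agrees with $\sum_H a_H\, t(H^{\mathrm{simp}},\cdot)$ on that cut-dense set, hence everywhere by cut-continuity of both sides; then applying Corollary~\ref{cor:whitney} to the identity $\sum_H a_H\, t(H,\cdot)=\sum_H a_H\, t(H^{\mathrm{simp}},\cdot)$ forces $a_H=0$ for every non-simple $H$. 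If you want to rescue the $\pm 1$-pattern idea, you would need a single sequence along which you can simultaneously read off \emph{all} the non-simple coefficients, which is considerably more delicate than the $\{0,1\}$-graphon trick.
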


One way to understand this theorem is that it shows that the solution
space of the natural generalization of $\frac{d^n}{dx^n}F(x) \equiv 0$ to
class functions on $\W$ is spanned by homomorphism densities.  So it
confirms the intuition that homomorphism densities play the role of
monomials in the algebra of continuous class functions.

To explain why the theorem is surprising, consider removing the condition
that $F$ is a class function on $\W$. In that case, consider any
multilinear $\Lambda : \W^n \to \R$ with $n \leq N$. Then $F(f) :=
\Lambda(f,f,\dots,f)$ would be a G\^ateaux-smooth solution to
\eqref{eqn:main}. Since the ``tangent space" to $\W$ at the origin is
infinite-dimensional, the space of such solutions is also
infinite-dimensional. Theorem \ref{thm:main} shows that imposing the
symmetry condition on $F$ (i.e., assuming that $F$ is a class function)
collapses the set of solutions to a finite-dimensional space.

The proof of this theorem requires several steps which we now outline
(see beginning of Section \ref{S4} for a more detailed outline). It
begins with the observation that the differential equation
\eqref{eqn:main} has solutions $F$ that satisfy:
\begin{equation*}
F(g) = F(0) + dF(0;g) + \frac{d^2F(0;g,g)}{2!} + \dots +
\frac{d^nF(0;g,\dots, g)}{n!}.
\end{equation*}

\noindent The proof of the theorem then proceeds by first understanding
the structure of the functional $\Lambda (g_1, \dots, g_n) := d^nF(0;g_1,
\dots, g_n)$. In fact, the maps $\Lambda : \W^n \to \R$ are multilinear
functionals satisfying rigid symmetries.  We exploit these symmetries to
show that $\Lambda$ restricted to weighted graphs are determined by
constants indexed by the isomorphism classes $\HH_n$ of graphs with $n$
edges and no isolated vertices.  This surprising local structure of
derivatives of smooth class functions at the origin is developed in
Section \ref{section:cc}. Computing the constants of Section
\ref{section:cc} for homomorphism densities shows that the derivatives of
any smooth class function at zero restricted to weighted graphs can be
written uniquely in terms of the derivatives of homomorphism densities.
In order to prove this, we give a new combinatorial interpretation of
higher G\^ateaux derivatives of homomorphism densities in Section
\ref{section:mainproof}.

Although the local structure theory of derivatives is interesting in its
own right, we show that it yields rich rewards.\medskip

\noindent {\bf Taylor polynomials.}
The finite-dimensionality of the solution spaces to \eqref{eqn:main}
allows us to develop the theory of Taylor polynomials.
%We provide a structure theory for derivatives of smooth class functions
%at zero that allows us to understand the local structure of such
%functions at zero.
In particular, we prove that every smooth continuous class function $F$
has a unique Taylor expansion where homomorphism densities play the role
of monomials. We also give sufficient conditions for when this sequence
of Taylor polynomials converges to $F$ (see Theorem \ref{thm:analytic}).
\medskip

\noindent {\bf Linear independence of homomorphism densities.}
Our techniques allow us to prove the linear independence of homomorphism
densities for partially labelled multigraphs (see Theorem
\ref{thm:gen_whitney}).  Such linear independence results go back to
Whitney \cite{Whitney} for simple graph homomorphism densities (see also
\cite{ELS:1979}). The proof technique depends on a combinatorial
interpretation of the formula for derivatives of homomorphism
densities.\medskip

\noindent {\bf Partially labelled multigraphs and infinite series.}
Theorem \ref{thm:main} shows that homomorphism densities $t(H,-)$ can be
seen as monomials with degree $|E(H)|$.
We investigate infinite {\it power series} of such monomials in the
general setting of partially labelled graphs. This allows us to provide
an answer to a question of Lov\'asz about infinite quantum algebras (see
Theorem \ref{thm:alpha_injection}).\medskip

\noindent {\bf Characterizing homomorphism densities.} 
As another application of our theory, in Theorem
\ref{thm:characterization} we characterize homomorphism densities
$t(H,-)$ as the continuous maps on $\W / \sim$ which are multiplicative
with respect to tensor products.
This complements previous characterizations of $\hom(H,-)$ and
$\hom(-,H)$ (see \cite[\S 5.6]{Lovasz:2013}).
%Our techniques are also applicable in solving extremal graph theory
%problems (Sidorenko's conjecture for stars; see Theorem
%\ref{thm:sidorenkostars}).
\medskip

\subsection{Organization of the paper}

The rest of this paper is organized as follows.  

In Section \ref{Sprelim} we review some basic properties of the cut-norm
on $\W$ and characterize the continuous homomorphism densities. The main
theme is how to effectively exploit the density of the finite simple
graphs in $\W$.
In Section \ref{Sdiff} we develop the general theory of differentiation
on $\W$. We explain what kind of smoothness assumptions are required, as
well as why we use the G\^ateaux derivative in favor of the Fr\'echet
derivative.

In Section \ref{S4} we prove the main theorem by investigating the
derivatives of smooth class functions in detail. In particular, we find
that the G\^ateaux derivatives $d^nF(0;g_1,\dots,g_n)$ for a continuous
class function satisfy relations that allow us to extract combinatorial
data indexed by isomorphism classes of graphs to characterize the
function.  We also give a new proof of the linear independence of
homomorphism densities using this structure theory, as well as a
combinatorial interpretation of derivatives of homomorphism densities.
Using the results of this section and the previous sections, we prove
Theorem \ref{thm:main}. As an application, we obtain an analytic
characterization of homomorphism densities $t(H,-)$.

In Section \ref{Sseries} we consider partially labelled multigraphs and
form algebras of linear combinations of them.  We define weighted
homomorphism densities for such graphs and develop a general analytic
theory of infinite series of such functions.  As an application, we
investigate whether right homomorphism densities can be expanded in terms
of left homomorphism densities (see \cite[Problem 16]{Lovasz:Open}).  We
also explain the uniqueness and existence of Taylor series of
homomorphism densities of smooth class functions and give sufficient
conditions for their convergence.  Finally, we generalize linear
independence of homomorphism densities to partially labelled multigraphs,
and explain how to construct an analytic theory of infinite quantum
algebras. The theory allows us to address another of Lov\'asz's questions
(\cite[Problem 7]{Lovasz:Open}).
%}}}

\section{Preliminaries}

In this section we review the topology on $\W$ and its basic properties.
We then introduce the general notions of differentiability on $\W$.

%{{{1 Section 2.1 - Continuity and homomorphism densities
\subsection{Continuity and homomorphism densities}\label{Sprelim}

A parallel viewpoint on the $\delta_\Box$ metric on $\W/\sim$ to the
viewpoints discussed in Section \ref{S1} is that two graphs are close in
the $\delta_\Box$-topology if finite random subgraphs of them have
similar distributions. As a consequence, continuous class functions on
graphon space are precisely the estimable (or ``testable") graph
parameters \cite[Theorem 15.1]{Lovasz:2013}. Informally, these are the
functions of isomorphism classes of graphs, that can be estimated at a
graph from a random induced subgraph.

Our goal is to study continuous functions on $(\W/ \sim, \delta_\Box)$,
and by extension, on $\W$. As we now explain, the homomorphism densities
$t(H,-)$ are fundamental amongst such continuous functions. 
Let $\scrs_t$ denote the linear span of homomorphism densities $t(H,-)$
for $H$ a simple graph. Since for any two disjoint finite simple graphs
$H_1, H_2$, one has
\begin{equation}\label{Eproduct}
t(H_1 \coprod H_2,f) = t(H_1,f) \times t(H_2,f),
\end{equation}
\noindent $\scrs_t$ is actually an algebra.
We now prove a Stone-Weierstrass-type theorem for this algebra.

\begin{theorem}[Density theorem]\label{Stone}
The linear span of homomorphism densities $\scrs_t$ is dense in
$C(\W/\sim, \delta_\Box)$, the space of continuous functions on
$\W/\sim$, under the topology of uniform convergence.  
\end{theorem}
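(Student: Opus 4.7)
The plan is to apply the classical Stone--Weierstrass theorem to the compact Hausdorff space $(\W/\sim, \delta_\Box)$. Since the excerpt already records that this quotient is a compact metric space, I only need to verify the usual hypotheses for $\scrs_t$, viewed as a subalgebra of $C(\W/\sim,\delta_\Box)$.

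First I would check that each $t(H,-)$ really descends to a continuous function on $\W/\sim$. That the functions descend to the quotient is immediate from the Borgs--Chayes--Lov\'asz theorem stated above: if $f\sim g$ then $t(H,f)=t(H,g)$ for every finite simple graph $H$. Continuity with respect to $\delta_\Box$ is the content of Theorem \ref{thm:BCLSV2008}, specifically the implication $(ii)\Rightarrow(i)$ and the concluding sentence of that theorem, which states that $\delta_\Box(W_n,W)\to 0$ implies $t(H,W_n)\to t(H,W)$ for every finite simple $H$. Hence $\scrs_t \subseteq C(\W/\sim,\delta_\Box)$.

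Next I would verify the three Stone--Weierstrass hypotheses. \textbf{Algebra structure:} by \eqref{Eproduct}, for disjoint finite simple graphs $H_1,H_2$ we have $t(H_1\coprod H_2,-) = t(H_1,-)\cdot t(H_2,-)$, so products of generators are again generators; combined with linearity this shows $\scrs_t$ is a subalgebra. \textbf{Unit:} the empty graph $H=\emptyset$ (no vertices, no edges) gives $t(H,f)\equiv 1$ by the convention stated after \eqref{Ehom}, so the constant function $1$ lies in $\scrs_t$. \textbf{Separation of points:} if $[f]\neq[g]$ in $\W/\sim$, then by the Borgs--Chayes--Lov\'asz theorem (the equivalence of (i) and (ii)) there must exist a finite simple graph $H$ with $t(H,f)\neq t(H,g)$, and the corresponding element of $\scrs_t$ separates the two classes.

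With these three properties verified, the Stone--Weierstrass theorem for compact Hausdorff spaces gives that $\scrs_t$ is uniformly dense in $C(\W/\sim,\delta_\Box)$, which is exactly the claim. I do not anticipate a genuine obstacle: each of the three hypotheses is essentially handed to us by results quoted earlier in the section (compactness of the quotient, Theorem \ref{thm:BCLSV2008} for continuity, the uniqueness theorem for separation, and the disjoint-union formula \eqref{Eproduct} for the algebra structure). The only mild care needed is to keep track of the distinction between $\W$ and $\W/\sim$ when invoking continuity and Stone--Weierstrass, but since every $t(H,-)$ is a class function this is purely bookkeeping.
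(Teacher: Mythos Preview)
Your proposal is correct and follows essentially the same approach as the paper: apply Stone--Weierstrass on the compact Hausdorff space $\W/\sim$, noting that $\scrs_t$ is an algebra containing the constants and separating points via the Borgs--Chayes--Lov\'asz uniqueness theorem. You are in fact more thorough than the paper, which does not explicitly verify continuity of $t(H,-)$ in the proof itself.
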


\begin{proof}
The space of functions $\scrs_t$ is an algebra of functions which
contains the constant function.  The space $\W/\sim$ is compact
(\cite{LS:2007}) and Hausdorff and so it suffices to check that the set
of functions separates points \cite[Theorem 7.32]{Rudin:1964}.  We need
only recall that $f \sim g$ if and only if $t(H,f) = t(H,g)$ for all
finite simple $H$. 
\end{proof}

Similarly, function values can be interpolated using functions from
$\scrs_t$.

\begin{theorem}[Lagrange Interpolation]\label{Plagrange}
Let $f_1, \dots, f_k \in \W/\sim$ be distinct graphons and $a_1, \dots ,
a_k \in \R$ be arbitrary real numbers. Then there exist finite simple
graphs $H_i$ and scalars $c_i$ such that $\sum_i c_i t(H_i,f_j) = a_j$
for all $1 \leq j \leq k$.  In other words, there exists an element $F
\in \scrs_t$ such that $F(f_j) = a_j$.
\end{theorem}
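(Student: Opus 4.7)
The plan is to mimic the classical Lagrange interpolation construction, replacing the monomials $x^i$ by elements of $\scrs_t$. Since $\scrs_t$ is both a linear space and (by \eqref{Eproduct}) an algebra containing the constants, we have enough flexibility to build indicator-like functions that vanish on all but one of the $f_j$.

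First, for each pair $i \neq j$ the graphons $f_i, f_j \in \W/\sim$ are distinct, so by the uniqueness theorem of Borgs--Chayes--Lov\'asz (part (ii) of the equivalent conditions defining $\sim$), there exists a finite simple graph $H_{ij}$ such that $t(H_{ij}, f_i) \neq t(H_{ij}, f_j)$. Define the affine function
\[
g_{ij}(f) := \frac{t(H_{ij}, f) - t(H_{ij}, f_i)}{t(H_{ij}, f_j) - t(H_{ij}, f_i)},
\]
which belongs to $\scrs_t$ (as a linear combination of $t(H_{ij}, -)$ and the constant function), and satisfies $g_{ij}(f_i) = 0$ and $g_{ij}(f_j) = 1$.

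Next, for each $j$ set $F_j := \prod_{i \neq j} g_{ij}$. Since $\scrs_t$ is closed under products, $F_j \in \scrs_t$. By construction $F_j(f_j) = 1$ and $F_j(f_\ell) = 0$ whenever $\ell \neq j$, because the factor $g_{\ell j}$ vanishes at $f_\ell$. Finally, put
\[
F := \sum_{j=1}^k a_j F_j \in \scrs_t.
\]
Then $F(f_j) = a_j$ for every $j$. Expanding the products and sums exhibits $F$ explicitly as $\sum_i c_i t(H_i, -)$ for finitely many finite simple graphs $H_i$ (obtained as disjoint unions of the $H_{ij}$'s, via \eqref{Eproduct}) and scalars $c_i \in \R$, which is the form required.

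There is no real obstacle here; the only substantive input is the separation-of-points fact from the weak equivalence theorem, and the algebra structure of $\scrs_t$ does the rest. The same argument is what underlies the Stone--Weierstrass step in Theorem \ref{Stone}, so one could alternatively phrase this proof as ``$\scrs_t$ is a point-separating subalgebra of $C(\W/\sim)$ containing the constants, hence admits Lagrange interpolation at any finite set of points.''
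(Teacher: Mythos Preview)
Your proof is correct and follows essentially the same approach as the paper: both use the separation-of-points property to build affine elements of $\scrs_t$ that take prescribed values at two of the $f_j$, then multiply them using the algebra structure to produce indicator-like functions. The only cosmetic difference is that the paper first reduces by linearity to the single case $a_1=1$, $a_j=0$ for $j\ge 2$, whereas you construct all the $F_j$ directly and then take the linear combination.
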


\begin{proof}
It suffices to show the result for $a_1 = 1$ and $a_j = 0$ for all $2 \le
j \le k$. Since the functions $t(H,-)$ separate points in $\W / \sim$,
for each $j \geq 2$ there exists a graph $H_j$ such that $t(H_j,f_j) \neq
t(H_j,f_1)$.  In particular there exist $b_j$ and $c_j$ such that $b_j
t(H_j,f_j) + c_j= 1$ and $b_j t(H_j,f_1) + c_j = 0$.  Recalling that the
linear span of homomorphism densities form an algebra, the function
$\prod_{j=2}^k (b_j t(H_j,f_j) + c_j)$ works.
\end{proof}\smallskip

\noindent {\bf Homomorphism densities as monomials.}

The above results suggest that $\scrs_t$ may play an important role in
the functional analysis of $(\W/\sim)$, for several reasons. For
instance, in addition to spanning an algebra of functions, the
homomorphism densities $t(H,f)$ naturally have a notion of degree. To
elaborate, let $\G_n$ be the set of isomorphism classes of unlabelled
simple graphs with $n$ edges and no isolated vertices. Now define
\begin{equation}
\G_{\leq n} := \bigcup_{j \leq n} \G_j, \qquad \G := \bigcup_{j \in \N}
\G_j.
\end{equation}

Equation \eqref{Eproduct} clearly shows that if $H_1 \in \G_n$ and $H_2
\in \G_{m}$ then $t(H_1,f) \times t(H_2,f) = t(H_3,f)$ for $H_3 \in \G_{n
+ m}$.  Therefore, the number of edges in the graph $H$ naturally serves
as a degree (i.e., a $\Z_+$-grading) for the function $t(H,f)$. Combined
with Theorems \ref{Stone} and \ref{Plagrange}, this suggests that
homomorphism densities may play the role of monomials in the algebra
$\scrs_t$. To carry this analogy further, recall that polynomials of
degree at most $N$ could be defined as solutions to the differential
equation
$\displaystyle \frac{d^{N+1}}{dx^{N+1}} F \equiv 0$.
A question of interest in the graphon setting would thus be to
ask which functions satisfy the system of differential equations
\begin{equation}\label{Ediff}
d^{N+1} F(f;g_1,\dots,g_{N+1}) \equiv 0, \qquad \forall f \in \W, \ g_i
\in \Adm(f).
\end{equation}

\noindent It is not hard to check that all homomorphism densities $F(f)
:= t(H,f)$ are solutions of \eqref{Ediff}, for $H \in \G_{\leq N}$.
Therefore Theorem \ref{thm:main} first of all gives further weight to our
notion of degree. Any function $\phi: \G \to \N$ satisfying $\phi (H_1
\coprod H_2) = \phi(H_1) + \phi(H_2)$ might be a candidate.   However, we
also show later that taking $N+1$ derivatives annihilates $t(H,-)$ for
$H$ with at most $N$ edges (see Proposition \ref{prop:highergateaux}).

What is more surprising is the fact that the homomorphism densities
$t(H,f)$ for $H \in \G_{\leq N}$ span all solutions of \eqref{Ediff}.
Therefore homomorphism densities (in fact for all multigraphs, not just
for all simple graphs) will play a fundamental role in the differential
calculus of class functions on $\W$.
We will make this analogy more precise at the end of Section
\ref{Sseries}.\bigskip

\noindent {\bf Continuity of multigraph homomorphism densities.}

Our main result, Theorem \ref{thm:main}, refers to homomorphism densities
$t(H,f)$ for $H$ a general multigraph without loops. For this paper, an
(undirected) multigraph $G$ is given by the data of the set of vertices
$V(G)$, set of undirected edges $E(G)$, and a map sending an edge $e$ to
its endpoints $\{e_s, e_t\} \subset V(G)$. We allow multiple edges to
have the same endpoints $\{e_s, e_t\}$, and the graph is undirected so
$e_s$ and $e_t$ could just as easily be interchanged.

Recall \cite[Section 5.2.1]{Lovasz:2013} that a {\it (node-and-edge)
homomorphism} of multigraphs $f: H \to G$ is defined by the data of a map
of vertices $V_f : V(H) \to V(G)$ and a map of edges $E_f : E(H) \to
E(G)$. The maps $E_f$ and $V_f$ must be compatible in the sense that
\begin{equation}\label{Enodeandedge}
\{E_f(e)_s,E_f(e)_t\} = \{V_f(e_s),V_f(e_t)\} \ \forall e \in E(H).
\end{equation}

\noindent Note that when $G$ is a multigraph, $E_f$ is not completely
determined by $V_f$. However, if $G$ is simple, we will identify $e \in
E(G)$ with its endpoints $\{ e_s,e_t \}$.
Moreover, we say that $f$ is (respectively) {\it injective}, {\it
surjective}, or {\it bijective}, when both $V_f$ and $E_f$ have the same
property.

Now let $\HH_n$ denote the isomorphism classes of graphs with $n$ edges,
no isolated vertices, and no self loops but possible multi-edges. Also
let $\HH_{\leq n} = \bigcup_{j \le n} \HH_j$. Clearly $\G_n \subset
\HH_n$ for all $n$. Then we have already defined
in Equation \eqref{Ehom} the homomorphism density $t(H,f)$
for an arbitrary multigraph $H \in \HH_n$, where $E(H)$ now denotes the
multiset of edges in $H$ and is independent of the choice of
representative $H$.
We can extend the definition of $t(H,-)$ to all $f \in \WW$.
When it is important to be more explicit about
naming the vertices and edges of $H$ we will write
\begin{equation*}
t(H,f) = \int_{[0,1]^{V(H)}} \prod_{e \in E(H)} f(x_{e_s},x_{e_t})
\prod_{i \in V(H)} dx_i, \qquad \forall H \in \HH, \ f \in \WW.
\end{equation*}

There is no consensus on the definition of a graph morphism between
multigraphs. One advantage of using the node-and-edge notion of
homomorphism is that if we define for multigraphs $H,G$ the combinatorial
quantity $t(H,G) := \hom(H,G)/|V(G)|^{|V(H)|}$, then
%\begin{equation*}
$t(H,G) =t(H,f^G)$
%\end{equation*}
where $f^G$ is defined as for simple graphs, but weighted according to
the multiplicity of the edge.  So these multigraph homomorphism densities
are class functions and behave similarly to simple graph homomorphism
densities. We now show that multigraph homomorphism densities are no
longer continuous in the cut-norm topology unless they lie in $\scrs_t$.
The following proposition collects together the continuity properties of
homomorphism densities that are needed for the proof of Theorem
\ref{thm:main}.  The proof exploits the fact that $\{ 0, 1 \}$-valued
graphons are dense in $\W$.

\begin{prop}\label{prop:cont_func_char}
Fix $n \geq 0$ and consider $F : \WW \to \R$ of the form
%\begin{equation}\label{Efcont}
$\displaystyle F(f) := \sum_{H \in \HH_{\leq n}} a_H t(H,f)$
%\end{equation}
for some constants $a_H$.
\begin{enumerate}[(i)]
\item Then $F : \WW \to \R$ is continuous in the $L^1$ topology.

\item If moreover $F$ is continuous in the cut-norm topology, then
\[ F(f) = \sum_{H \in \HH_{\leq n}} a_H t(H^{simp},f), \]

\noindent where $H^{simp}$ is the simple graph obtained from $H$ by
replacing each set of repeated edges between a pair of vertices by one
edge.
\end{enumerate}
\end{prop}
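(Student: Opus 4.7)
The plan is to prove (i) by a telescoping estimate and to prove (ii) by combining agreement of $F$ and its ``simplification'' on simple graphons, $\delta_\Box$-density in $\W/\sim$, and a polynomial-identity argument to extend from $\W$ to $\WW$.

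For (i), I fix $H \in \HH_{\leq n}$ with $k := |E(H)|$ edges and show that $t(H, \cdot)$ is $L^1$-Lipschitz on each $L^\infty$-bounded subset of $\WW$. The key ingredient is the telescoping identity
\[
\prod_{j=1}^k f_j - \prod_{j=1}^k g_j = \sum_{i=1}^k \Bigl(\prod_{j<i} f_j\Bigr)(f_i - g_i)\Bigl(\prod_{j>i} g_j\Bigr),
\]
applied with $f_j := f(x_{(e_j)_s}, x_{(e_j)_t})$ and $g_j := g(x_{(e_j)_s}, x_{(e_j)_t})$ after an arbitrary ordering $e_1, \ldots, e_k$ of $E(H)$. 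Integrating each summand over $[0,1]^{V(H)}$ and using Fubini to first integrate out the vertex variables not incident to $e_i$ contributes at most $M^{k-1}$ when $\|f\|_\infty, \|g\|_\infty \leq M$; the remaining integral in $(x_{(e_i)_s}, x_{(e_i)_t})$ is bounded by $\|f-g\|_1$. Summing the $k$ terms gives $|t(H,f) - t(H,g)| \leq kM^{k-1}\|f-g\|_1$, and taking the $a_H$-linear combination yields (i).

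For (ii), define $\tilde F(f) := \sum_{H \in \HH_{\leq n}} a_H\, t(H^{simp}, f)$; I will show $F \equiv \tilde F$. For any simple graph $G$, $f^G$ takes values in $\{0,1\}$, so $f^G(x,y)^m = f^G(x,y)$ for all $m \geq 1$, and hence replacing parallel edges of $H$ by single edges leaves the integrand defining $t(H, f^G)$ unchanged. This yields $t(H, f^G) = t(H^{simp}, f^G)$, and thus $F(f^G) = \tilde F(f^G)$ for every finite simple graph $G$. Now $\tilde F$ is cut-norm continuous as a finite linear combination of simple-graph homomorphism densities (by Theorem \ref{thm:BCLSV2008}), and both $F$ and $\tilde F$ are class functions. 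Any cut-norm continuous class function is $\delta_\Box$-continuous on $\W/\sim$: given $\delta_\Box(f_n, f) \to 0$, choose $\psi_n \in S_{[0,1]}$ with $\cutnorm{f_n - f^{\psi_n}} \to 0$, then $|F(f_n) - F(f)| = |F(f_n) - F(f^{\psi_n})| \to 0$ by cut-norm continuity and the class-function invariance $F(f^{\psi_n}) = F(f)$. Since simple graphons are $\delta_\Box$-dense in $\W/\sim$, $F = \tilde F$ on $\W$.

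To extend from $\W$ to $\WW$, I use a polynomial identity on step functions. For any partition $\{I_1, \ldots, I_m\}$ of $[0,1]$ and symmetric real matrix $(c_{ij})$, both $F$ and $\tilde F$ evaluated at $f := \sum_{i,j} c_{ij}\mathbf{1}_{I_i \times I_j}$ are polynomials in the entries $c_{ij}$ of degree at most $n$. The previous step shows these polynomials agree for $c_{ij} \in [0,1]$, a set with nonempty interior in $\R^{m(m+1)/2}$, so they agree as polynomials in $(c_{ij})$. Hence $F = \tilde F$ on all step functions, and by density of step functions in $\WW$ in $L^1$ (with uniform $L^\infty$ bound), combined with the $L^1$-continuity from part (i), $F = \tilde F$ on all of $\WW$. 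The main obstacle is this last extension: density of simple graphons only directly gives equality on $(\W/\sim, \delta_\Box)$, so bridging to $\WW$ requires the polynomial-identity argument together with part (i).
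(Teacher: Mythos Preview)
Your proof is correct and follows essentially the same approach as the paper: the telescoping estimate for (i) and the observation $t(H,f)=t(H^{simp},f)$ for $\{0,1\}$-valued graphons combined with density and continuity for (ii). The paper's proof of (ii) stops at equality on $\W$ (via cut-norm density of $\{0,1\}$-valued graphons there), whereas you add the polynomial-identity argument on step functions together with part (i) to push the equality to all of $\WW$; this extra step is a genuine refinement, since the statement is phrased for $F:\WW\to\R$ but the paper's argument as written only yields the conclusion on $\W$.
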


\begin{proof}\hfill
\begin{enumerate}[(i)]
\item It suffices to consider a single multigraph homomorphism density
$t(H,-)$.  Consider the multilinear functional $\Lambda : \W^n \to \R$
defined by
\begin{equation*}
\Lambda((f_e)_{e \in E(H)}) := \int_{[0,1]^{V(H)}} \prod_{e \in E(H)}
f_e(x_{e_s}, x_{e_t}) \prod_{i \in V(H)} dx_i.
\end{equation*}

\noindent It is not difficult to see, by replacing $f_e$ by $g_e$ one
term at time, that for $f_e, g_e \in \W$,
\begin{equation*}
\left|\Lambda((f_e)_{e \in E(H)}) - \Lambda((g_e)_{e \in E(H)}) \right|
\le \sum_{e \in E(H)} \|f_e - g_e\|_1
\end{equation*}

\noindent since $\|f_e\|_\infty, \|g_e\|_\infty \le 1$ for $f_e, g_e \in
\W$. It follows that $|t(H,f) - t(H,g)| \le |E(H)| \cdot \|f_e - g_e\|_1$
and so is continuous with respect to the $L^1$ topology.

\item  We can rewrite the function $t(H,f)$ as
\begin{equation*}
t(H,f) = \int_{[0,1]^k} \prod_{(i,j) \in E(H^{simp})} \varphi_{ij}(f(x_i,
x_j)) \prod_{i=1}^k dx_i
\end{equation*}  

\noindent where $\varphi_{ij}(x) = x^{m_{ij}}$, with $m_{ij}$ the
multiplicity of the edge $(i,j)$ in $H$.  From this expression, it is
clear that $t(H,f) = t(H^{simp},f)$ for $\{0,1\}$-valued graphons $f$.
In particular, $F(f) = \sum_{H \in \HH_{\leq n}} a_H t(H^{simp},f)$ for
$\{0,1\}$-valued graphons $f$. The result now follows from the continuity
of both sides in the cut-norm and the density of such graphons in $\W$.
\end{enumerate}

\end{proof}

\begin{remark}
In fact we show below that the homomorphism densities over all
multigraphs are linearly independent as functions on $\W$ -- see
Corollary \ref{cor:whitney}. As a consequence, if $F$ is given as in
%\eqref{Efcont}
Proposition \ref{prop:cont_func_char}
and is continuous in the cut-norm, then $a_H = 0$ for all $H$ that are
not simple. The converse also holds, because homomorphism densities for
simple graphs are continuous in the cut-norm.
\end{remark}
%}}}

\subsection{Differentiation on \W}\label{Sdiff}

In this section we develop a general theory of differentiating functions
on graphon space. There are two standard notions of derivatives in such a
setting: the G\^ateaux derivative and the Fr\'echet derivative.
We show in this section that taking the Fr\'echet derivative is a very
restrictive notion and is not appropriate for our analysis, in that most
homomorphism densities are not Fr\'echet differentiable. We proceed to
develop some technical machinery to refine the G\^ateaux theory on $\W$
to helps bypass the fact that $\W$ is not a vector space (so one cannot
take G\^ateaux derivatives at all points along all directions). We also
make precise our notion of (sufficiently) smooth G\^ateaux differentiable
functions on $\W$. Finally, we illustrate our analytic methods by
providing a new proof of Sidorenko's Theorem for star graphs.\medskip

%{{{1 Section 3.1 - Gateaux derivatives, admissibility, and smoothness
\noindent {\bf G\^ateaux derivatives, admissibility, and
smoothness}

The G\^ateaux derivative is usually defined in the context of a real
linear space $E$ and a map $F : E \to \mathbb{R}$.  In such settings the
{\it G\^ateaux derivative of $F$} at $f \in E$ in the direction of $g \in
E$ is defined to be the limit
$dF(f;g) := \lim_{\lambda \to 0} \frac{1}{\lambda}(F(f + \lambda g) -
F(f))$,
if such a limit exists. However, in this paper we have to
differentiate functions $F : \W \to \R$ where $\W$ is not a vector space.
In that case, if $f \in \W$ and $g \in E = \WW \supset \W$, then $f +
\lambda g$ need not always lie in $\W$, so we need to clarify what we
mean by the G\^ateaux derivative for functions on $\W$.  For instance, if
$f \equiv 0$ and $g$ is defined to be $-1$ and $1$ on disjoint
complementary subsets of $[0,1]^2$, then $f + \lambda g \notin \W$ for
any $\lambda \neq 0$. In this paper, we use the following notion to deal
with this issue.

\begin{definition}\label{Dadmissible}
Given a nonempty convex subset $U$ in a real linear space $E$ and $f \in
U$, define the {\it admissible directions at $f$} to be
\begin{equation}
\Adm{}_U(f) :=\{ g \in E : f + \epsilon g \in U \mbox{ for all
sufficiently small } 0 \leq \epsilon < 1 \}.
\end{equation}
\end{definition}

\begin{remark}
In this paper, unless otherwise specified, {\it admissibility} is
always assumed to be with respect to $E = \WW$ and $U = \W$. Thus we
always write $\Adm(f)$ to mean $\Adm_{\W}(f)$.
Also note that $\Adm_U(f)$ always contains the origin and is itself a
convex subset and a cone in $E$, since $U$ is convex.
\end{remark}

We now explain how the notion of admissibility applies to (higher)
G\^ateaux derivatives.
%In this paper we will often write ``derivatives" to refer to
%G\^ateaux derivatives of all orders in all directions.

\begin{definition}\label{Dgateaux}
Let $E$ be a real linear space, $U \subset E$ a nonempty convex subset,
and $F : U \to \R$.
We say that the {\it G\^ateaux derivative} exists at $f \in U$ in the
direction $g \in \Adm_U(f)$ if the limit
$dF(f;g) := \lim_{\lambda \to 0,\ f + \lambda g \in U}
\frac{1}{\lambda}(F(f + \lambda g) - F(g))$
exists. Note that this limit is one-sided if $-g \notin \Adm_U(f)$.

Similarly, we say that $F$ is {\it $n$-times G\^ateaux differentiable}
at $f \in Y$ in the directions $g_1, \dots, g_n \in \Adm_U(f)$, if the
{\it higher mixed G\^ateaux derivatives} $d^{n-1}F(f + \lambda g_n; g_1,
\dots, g_{n-1})$ exist for all $\lambda$ small enough such that $f +
\lambda g_n \in U$, and the limit
\begin{equation*}
d^n F(f;g_1, \dots, g_n) := \lim_{\lambda \to 0,\ f + \lambda g_n \in U}
\frac{d^{n-1}F(f + \lambda g_n;g_1, \dots, g_{n-1}) - d^{n-1}F(f;g_1,
\dots, g_{n-1})}{\lambda}
\end{equation*}

\noindent exists.
\end{definition}

\begin{remark}
In the formula for defining higher G\^ateaux derivatives, one might
suspect that admissibility issues arise -- for instance, that $g_1 \in
\Adm_U(f + \lambda g_n)$ needs to hold for all small $\lambda$. However,
these issues are immediately bypassed since we assume $U$ to be convex.
Indeed, this is easily verified by induction on $n$, using the fact that
if $g_i \in \Adm_U(f)$ for all $i$, then $f + \sum_{i=1}^n \lambda_i g_i
\in U$ for all sufficiently small $0 \leq \lambda_i < 1$.
\end{remark}

In the remainder of this paper, we will need the notion of a continuously
differentiable function. This is made precise in the following
definition.

\begin{definition}\label{DCnatf}
Fix a real linear space $E$, a convex subset $U \subset E$, and a
function $F: U \to \mathbb{R}$.
Given $f \in U$, $m \in \N$, and $g_1, \dots, g_m \in \Adm_U(f)$, define
an auxiliary function $\mathcal{F}_{f, \bf g}(\lambda_1, \dots,
\lambda_m) := F(f + \lambda_1 g_1 + \dots + \lambda_m g_m)$. Note that
$\mathcal{F}_{f,  \bf g}$ is defined on a convex subset of $\mathbb{R}^m$
containing zero.  Now given an integer $n \geq 0$, we say that $F$ is
{\it $C^n$ at $f$} if $\mathcal{F}_{f, \bf g}$ has a $C^n$ extension to
all of $\mathbb{R}^m$ for all $g_1, \dots, g_m \in \Adm_U(f)$ and $m \in
\N$. We say that $F$ is {\it smooth at $f$} if it is $C^n$ at $f$ for
every $n \geq 0$. We say that $F$ is $C^n$ (or smooth) if it is $C^n$ (or
smooth) everywhere.
\end{definition}

\begin{remark}
The question of the possibility of extending a function $f$ and its
candidate derivatives $(f_\alpha)_{|\alpha| \le n}$ on a closed domain $D
\subset \mathbb{R}^m$ to a $C^n$ function and its derivatives on all of
$\mathbb{R}^m$ is addressed by Whitney-type extension theorems  (see for
example the exposition in \cite{BB:2012}).  In this paper, we will be
able to find extensions explicitly for the functions of interest -- see
Lemma \ref{lem:multilinear_deriv}.
\end{remark}

%Note that Definition \ref{DCnatf} is weaker than the requirement that the
%G\^ateaux derivatives $d^nF(f;g_1, \dots, g_n)$ be continuous in $f$.
%This would imply that $F$ is $C^n$, but is difficult to verify in practice.

In order to show that $d^n F(f; g_1, \dots, g_n)$ is multilinear in the
$g_i$, we make the following definition.

\begin{definition}\label{Dlinear}
Suppose $V \subset E$ is a subset of a real vector space $E$. A function
$\Lambda : V^n \to \R$ is said to be {\it multilinear} if $\Lambda$
extends (uniquely) to a multilinear functional $: ({\rm span}_\R V)^n \to
\R$.
\end{definition}

We now write down a precise statement about the multilinearity of the
(higher) G\^ateaux derivatives. This lemma is crucial for the rest of
the paper.

\begin{lemma}\label{lem:std_gateaux}
Suppose $U \subset E$ is convex, and the function $F : U \to \R$ is $C^n$
at $f \in U$.
\begin{enumerate}[(i)]
\item For any permutation $\tau \in S_n$, we have $d^nF(f;g_1, \dots,
g_n) = d^nF(f;g_{\tau(1)}, \dots, g_{\tau(n)})$ for all $g_i \in
\Adm_U(f)$ for $1 \leq i \leq n$.
\item The G\^ateaux derivatives $d^n F(f;g_1, \dots, g_n)$ are
multilinear in the $g_i \in \Adm_U(f)$.
\end{enumerate}
\end{lemma}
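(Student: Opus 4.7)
The plan is to reduce both claims to standard multivariable calculus applied to the auxiliary functions $\mathcal{F}_{f, \bf g}$ of Definition \ref{DCnatf}. The preliminary step is to show by induction on $n$ that for all $g_1, \dots, g_n \in \Adm_U(f)$,
\begin{equation*}
d^n F(f; g_1, \dots, g_n) = \frac{\partial^n \mathcal{F}_{f, (g_1, \dots, g_n)}}{\partial \lambda_1 \cdots \partial \lambda_n}(0, \dots, 0),
\end{equation*}
computed via the $C^n$ extension guaranteed by hypothesis. The base case $n = 1$ is the definition. For the inductive step, I would use the pointwise identity
\begin{equation*}
\mathcal{F}_{f + \lambda g_n, (g_1, \dots, g_{n-1})}(\lambda_1, \dots, \lambda_{n-1}) = \mathcal{F}_{f, (g_1, \dots, g_n)}(\lambda_1, \dots, \lambda_{n-1}, \lambda),
\end{equation*}
apply the inductive hypothesis to rewrite $d^{n-1}F(f + \lambda g_n; g_1, \dots, g_{n-1})$ as $\partial_1 \cdots \partial_{n-1} \mathcal{F}_{f, (g_1, \dots, g_n)}(0, \dots, 0, \lambda)$, and take the $\lambda \to 0$ limit to obtain the top-order mixed partial.

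Given this formula, part (i) is immediate from Clairaut/Schwarz, which applies to the $C^n$ extension and forces the mixed partials to commute. For part (ii), positive homogeneity $d^n F(f; \alpha g_1, g_2, \dots, g_n) = \alpha\, d^n F(f; g_1, \dots, g_n)$ for $\alpha \ge 0$ follows by rescaling a single coordinate. For additivity in the first slot, given $g, h \in \Adm_U(f)$ I would invoke the auxiliary function in the $n+1$ directions $(g, h, g_2, \dots, g_n)$ and use
\begin{equation*}
\mathcal{F}_{f, (g+h, g_2, \dots, g_n)}(\lambda_1, \dots, \lambda_n) = \mathcal{F}_{f, (g, h, g_2, \dots, g_n)}(\lambda_1, \lambda_1, \lambda_2, \dots, \lambda_n).
\end{equation*}
Differentiating the right-hand side in $\lambda_1$ via the chain rule produces $(\partial_1 + \partial_2)$ acting on the $(n+1)$-variable extension; then applying $\partial_{\lambda_2} \cdots \partial_{\lambda_n}$ and evaluating at the origin yields exactly $d^n F(f; g, g_2, \dots, g_n) + d^n F(f; h, g_2, \dots, g_n)$ by the preliminary formula. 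Combined with part (i), this gives additivity and homogeneity in every slot. Finally, since $\Adm_U(f)$ is a convex cone, every element of $\mathrm{span}_\R \Adm_U(f)$ has the form $g - h$ with $g, h \in \Adm_U(f)$, so the natural prescription $\Lambda(g - h, \dots) := \Lambda(g, \dots) - \Lambda(h, \dots)$ extends $d^n F(f; \cdot, \dots, \cdot)$ to a well-defined multilinear functional on $(\mathrm{span}_\R \Adm_U(f))^n$, as required by Definition \ref{Dlinear}.

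The principal subtlety is the preliminary formula: because the natural domain of $\mathcal{F}_{f, \bf g}$ is only a cone at the origin, the $C^n$ extension to $\R^m$ is not unique. However, its mixed partial derivatives at $(0, \dots, 0)$ are intrinsic, since they can be obtained as iterated one-sided limits along the rays $t \mapsto f + t g_i$ with $t \ge 0$, and any $C^n$ extension must reproduce these limits. This intrinsic character legitimizes both the application of Clairaut and the chain-rule decomposition above, and closes the argument.
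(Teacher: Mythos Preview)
Your proposal is correct and follows essentially the same approach as the paper: both reduce the claims to ordinary multivariable calculus via the auxiliary functions $\mathcal{F}_{f,\mathbf{g}}$, invoking equality of mixed partials for part (i) and the multilinearity of higher differentials for part (ii). Your treatment is somewhat more explicit---you verify additivity by passing to an $(n+1)$-variable auxiliary function and apply the chain rule, whereas the paper simply asserts that the derivatives of the $C^n$ extension of $\mathcal{F}_{f,\mathbf{h}}$ furnish the multilinear extension on each finite-dimensional $(\mathrm{span}_\R(h_1,\dots,h_m))^n$ and then checks consistency across different $\mathbf{h}$; your remark that the mixed partials at the origin are intrinsic (independent of the chosen $C^n$ extension) makes explicit a point the paper leaves implicit.
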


\begin{proof}
The functions $\mathcal{F}_{f, \bf h}$ for $h_1, \dots, h_m \in
\Adm_U(f)$ can be extended to $C^n$ functions on $\mathbb{R}^m$.
Therefore part (i) follows from choosing $m = n$ and $h_i = g_i$ and the
equality of mixed partials.  

To show part (ii), we must show that $d^n F(f;g_1, \dots, g_n)$ extends
to a unique multilinear functional on $(g_1, \dots, g_n) \in ({\rm
span}_\R \Adm_U(f))^n$.  Since $({\rm span}_\R \Adm_U(f))^n$ is the union
of subspaces $({\rm span}_\R (h_1, \dots, h_m))^n$ for all $m \in \N$ and
$h_i \in \Adm_U(f)$, it suffices to show that $d^n F(f;g_1, \dots, g_n)$
extends multilinearly to each $({\rm span}_\R (h_1, \dots, h_m))^n$.  Let
${\bf h} := (h_1, \dots, h_m)$. By the definition of $C^n$ functions,
$\mathcal{F}_{f, \bf h}$ can be extended to a $C^n$ function on
$\mathbb{R}^m$.  Its derivatives can now be used to extend $d^n F(f;g_1,
\dots, g_n)$ to $({\rm span}_\R (h_1, \dots, h_m))^n$. It is not hard to
see that the different extensions for $\bf h$ are consistent so the
result follows.
\end{proof}

%\noindent Theorem \ref{thm:std_gateaux} can be proved by adopting the
%strategy used in proving \cite[Theorems 9.21 and 9.41]{Rudin:1964} and by
%using the following preliminary result, which is not hard to show.

%\begin{lemma}
%Suppose $V$ is a convex subset of a real vector space $E$, with $0 \in
%V$. Let $\Lambda : V^n \to \R$ for some $n \in \N$. Then the following
%are equivalent:
%\begin{enumerate}
%\item $\Lambda$ is multi-linear (in the sense of Definition
%\ref{Dlinear}).
%
%\item For all $m_1, \dots, m_n \in \N$ and $\alpha_{ij} \in \R$, $g_{ij}
%\in V$ (with $1 \leq i \leq m_j$) such that $h_j := \sum_{i=1}^{m_j}
%\alpha_{ij} g_{ij} \in V$ for all $j$,
%\[ \Lambda(h_1, \dots, h_n) = \sum_{i_1=1}^{m_1} \cdots
%\sum_{i_n=1}^{m_n} \left( \prod_{j=1}^n \alpha_{i_j j} \right) \cdot
%\Lambda(g_{i_1 1}, \dots, g_{i_n n}). \]
%
%\item For all $1 \leq j \leq n$, $m_j \in \N$, and $\alpha_{ij} \in \R$,
%$g_{ij} \in V$ (with $1 \leq i \leq m_j$) such that $h_j :=
%\sum_{i=1}^{m_j} \alpha_{ij} g_{ij} \in V$,
%\[ \Lambda(h_1, \dots, h_n) = \sum_{i=1}^{m_j}
%\alpha_{ij} \Lambda(h_1, \dots, h_{j-1}, g_{ij}, h_{j+1}, \dots, h_n)
%\qquad \forall h_1, \dots, \widehat{h_j}, \dots, h_n \in V. \]
%
%\item For all $1 \leq j \leq n$, and $\alpha_{1j}, \alpha_{2j} \geq 0$,
%$g_{1j}, g_{2j} \in V$ such that $h_j := \alpha_{1j} g_{1j} + \alpha_{2j}
%g_{2j} \in V$,
%\[ \Lambda(h_1, \dots, h_n) = \sum_{i=1}^2 \alpha_{ij} \Lambda(h_1,
%\dots, h_{j-1}, g_{ij}, h_{j+1}, \dots, h_n) \qquad \forall h_1, \dots,
%\widehat{h_j}, \dots, h_n \in V. \]
%\end{enumerate}
%\end{lemma}
%and is deferred to Appendix \ref{Agateaux}.

In later sections, we will almost always assume that our functions are
G\^ateaux smooth or $C^n$. Indeed, we need these properties (including
Lemma \ref{lem:std_gateaux}) in order to prove Theorem \ref{thm:main}
and other main results.  \bigskip

%\noindent {\bf Fr\'echet derivative.}

\begin{remark}
A stronger notion of differentiability of functions $F : E \to \R$ which
is often used on normed linear spaces $E$ is the Fr\'echet derivative. It
is natural to ask if such a notion can be used to study functions on
$\WW$ equipped with the seminorm $\|\cdot\|_{\cut}$.  However, even
homomorphism densities are generally not Fr\'echet differentiable, as we
now explain.

Let $A_n$ denote the graph with vertex set $\{1,2,3, \dots, n\}$ and
edges $(i,i+1)$ for $1 \le i \le n-1$.  If $H$ is a disjoint union of
copies $A_1$ and $A_2$ then it is easy to see that $t(H,f)$ is Fr\'echet
differentiable on all of $\WW$.  However, for general $H$, the formula
for $t(H,f)$ may not define a continuous function on $\WW$.  Even if we
restrict to $\W$ we can compute that for $f \ge c > 0$, the G\^ateaux
derivatives of $t(H,f)$ at $f$ do not form a good enough linear
approximation to $t(H,f)$. To see this, define $g_n \in \W$ via:
$g_n(x_1,x_2) := {\bf 1}(\min(x_1,x_2) < n^{-1})$. One now checks that if
$f \in \WW$ is bounded below by $c>0$, and $H$ is not a disjoint union of
copies of $A_1$ and $A_2$, then
\[ \lim_{n \to \infty} \frac{|t(H,f + g_n) - t(H,f) - d(t(H,f);
g_n)|}{\boxnorm{g_n}} \geq \frac{1}{2} c^{|E(H)|-2} > 0. \]

\noindent We therefore work with G\^ateaux derivatives in the rest of the
paper.
\end{remark}

G\^ateaux derivatives and other variational techniques have been used to
investigate problems in combinatorics and graph theory in the literature;
see for instance \cite{CD:2011}, \cite[Chapter 16.2]{Lovasz:2013}.
We illustrate how the G\^ateaux derivative can be used to solve
optimization problems via a simple case of Sidorenko's conjecture --
namely, for star graphs. This case was solved in Sidorenko's original
paper \cite{Sidorenko:1993}.

\begin{theorem}\label{thm:sidorenkostars} 
Let $S_k$ be the star graph with $k+1$ vertices $\{0,1,2, \dots, k\}$
and $k$ edges from $0$ to all $i>0$.
\begin{enumerate}[(i)]
\item If $f \in \W$ has edge density $t(K_2, f) = c$ then $t(S_k,f) \ge
t(S_k,c)$. 
\item If $f \in \W$ has edge density $t(K_2, f) = c$ then $t(S_k,f) =
t(S_k,c)$ if and only if $\int_x f(x,y) = c$ for almost every $y$. 
\end{enumerate}  
\end{theorem}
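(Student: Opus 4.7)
The plan is to reduce both sides to a one-variable integral of the ``degree function'' and then apply Jensen's inequality; the G\^ateaux machinery gives an alternate, variational route that matches the motivating theme of the section.

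First I would integrate out the leaf variables. Since $S_k$ has a single central vertex $0$ joined to leaves $1,\dots,k$, and each leaf variable $x_i$ appears in exactly one edge factor $f(x_0,x_i)$, Fubini gives
\[
t(S_k,f) \;=\; \int_0^1 \Bigl(\int_0^1 f(x,y)\,dy\Bigr)^{\!k}\,dx \;=\; \int_0^1 d_f(x)^k\,dx,
\]
where $d_f(x) := \int_0^1 f(x,y)\,dy$ is the ``degree'' of $x$ in $f$. The same reduction shows $t(K_2,f) = \int_0^1 d_f(x)\,dx = c$, and $t(S_k,c) = c^k$ for the constant graphon with value $c$.

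For part (i), apply Jensen's inequality to the convex function $\varphi(t) = t^k$ on $[0,1]$, regarding $d_f \in L^1([0,1])$ as a random variable under Lebesgue measure:
\[
t(S_k,f) \;=\; \int_0^1 d_f(x)^k\,dx \;\geq\; \Bigl(\int_0^1 d_f(x)\,dx\Bigr)^{\!k} \;=\; c^k \;=\; t(S_k,c).
\]
For part (ii), the case $k=1$ is trivial. For $k\geq 2$, $\varphi$ is strictly convex, so Jensen is tight if and only if $d_f$ is constant almost everywhere; the constraint $\int d_f = c$ then forces the constant value to be $c$, yielding the stated condition $\int_x f(x,y)\,dx = c$ for a.e.\ $y$ (equivalently a.e.\ $x$, by symmetry of $f$).

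To tie this to the G\^ateaux framework developed in the section, one can alternatively argue by Lagrange multipliers. Direct computation gives $d\,t(S_k,\cdot)(f;g) = k\int d_f^{k-1}\,d_g\,dx$ and $d\,t(K_2,\cdot)(f;g) = \int d_g\,dx$, so a stationary point of $t(S_k,\cdot)$ over the convex (cut-norm closed, hence compact in $\W/{\sim}$) constraint set $\{t(K_2,\cdot) = c\}$ must satisfy $\int (k d_f^{k-1} - \lambda)\,d_g\,dx = 0$ for all $g \in \Adm(f)$. Since admissible directions of the form $g(x,y) = h(x) + h(y)$ with $h \in L^\infty$ (scaled small) make $d_g$ range over a dense subspace, this forces $d_f^{k-1}$, hence $d_f$, to be a.e.\ constant, again giving (ii). The main technical point in this variational route is verifying that the family of admissible $d_g$ is rich enough to separate points; the Jensen argument sidesteps this entirely and is the cleaner proof to present.
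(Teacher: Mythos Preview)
Your Jensen argument is correct and is essentially Sidorenko's original proof. The paper, however, takes a different route tailored to its theme: it works directly with G\^ateaux derivatives along the segment $I(f,c) = \{\lambda c + (1-\lambda)f : \lambda \in [0,1]\}$. Setting $g = f - c$, it computes $dt(S_k,c;g) = 0$ (since $g$ has edge density zero) and then shows
\[
d^2 t(S_k,f;g,g) = 2\binom{k}{2}\int_0^1 \Bigl(\int_0^1 g(x_0,z)\,dz\Bigr)^{2}\Bigl(\int_0^1 f(x_0,y)\,dy\Bigr)^{k-2}dx_0 \geq 0
\]
for every $f \in \W$, so $t(S_k,\cdot)$ is convex on $I(f,c)$ with a critical point at $c$, giving (i). For (ii) it observes that this second derivative at $c$ is strictly positive precisely when $\int g(x_0,z)\,dz$ is not a.e.\ zero, i.e.\ when $d_f$ is not a.e.\ equal to $c$.

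The two arguments are closely related under the surface: the paper's second-derivative formula is exactly the quadratic term one sees when expanding $\int(d_f + \lambda d_g)^k$, which is what makes Jensen work. Your approach is cleaner and more self-contained; the paper's approach is chosen deliberately to showcase the G\^ateaux calculus it has just set up, and in particular to illustrate how convexity along admissible line segments can replace global inequalities. Your Lagrange-multiplier sketch is a third variant; as you note, the density of $\{d_g : g \in \Adm(f)\}$ needs justification, and the paper sidesteps this by working on a one-dimensional segment rather than over the full constraint set.

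One small remark: your handling of $k=1$ in (ii) glosses over the fact that for $k=1$ equality always holds regardless of whether $d_f$ is constant, so the ``if and only if'' as literally stated requires $k \geq 2$; the paper's formulation has the same issue.
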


\begin{proof}

\begin{enumerate}[(i)]
\item  Let $\mathcal{W}^{(0)} \subset \WW$ denote the linear subspace of
all $f \in \WW$ with edge density $0$. 

\noindent Either by direct computation or by Proposition
\ref{prop:highergateaux} we can compute the higher derivatives of
$t(S_k,f)$ and see that it is G\^ateaux smooth on $\W$.  Let $g := f - c$
and note that $g \in \WW^{(0)}$.  The first derivative of $t(S_k,-)$ at
$c$ is
\begin{equation}\label{Estarlocalminimum}
dt(S_k,c;g) = \sum_{(i_1,j_1)\in E(S_k)} \int_{[0,1]^{k+1}}
g(x_{i_1},x_{j_1}) \left( \prod_{(i,j) \neq (i_1,j_1)} c \right)
\prod_{i=0}^k dx_i = 0,
\end{equation}
since $g \in \WW^{(0)}$.  

\noindent The second derivative at any $f \in \W$ and $g \in \Adm(f)$ is
given by
\begin{equation}\label{Estarsecondderivative}
d^2t(S_k,f;g,g) = 2\sum_{1 \le i < j \le k} \int_{[0,1]}
\left(\int_{[0,1]} g(x_0,z) dz\right)^2 \left(\int_{[0,1]} f(x_0,y)
dy\right)^{k-2} dx_0.
\end{equation}

\noindent Since $f \ge 0$ and $(\int_{[0,1]} g(x_0,z) dz)^2 \ge 0$, we
conclude that 
$d^2t(S_k,f;g,g) \ge 0$.
Now consider the set
\[ I(f,c) := \{ \lambda c + (1-\lambda)f : \lambda \in [0,1] \} \subset c
+ \WW^{(0)}. \]

\noindent By Equation \eqref{Estarlocalminimum}, we conclude that the
constant graphon $c$ is a local minimum of $t(S_k,f)$ on $I(f,c)$.
The star density $t(S_k,-)$ is convex on $I(f,c)$
%by equation \eqref{Estarconvex}
since from above, $d^2t(S_k, \lambda f + (1-\lambda)c ;f-c,f-c) \geq 0$
for all $\lambda \in [0,1]$. Therefore $t(S_k,f) \geq t(S_k,c)$.

\item Assume now that $\int_x f(x,y) dx = c$ for almost every $y$. 
We compute
\[ t(S_k,f) = \int_{[0,1]} \prod_{1 \le i \le k} \left(\int_{[0,1]}
f(x_0,x_i) dx_i\right) dx_0 = \int_{[0,1]} c^k dx_0 = t(S_k,c). \]

\noindent Assume now that $\int_x f(x,y)$ is not equal to a constant for
almost every $y$. Let $g := f - c$. Using Equation
\eqref{Estarsecondderivative} we compute that the second derivative is
given by
\[ d^2t(S_k,c;g,g) = 2 c^{k-2} \sum_{1 \le i < j \le k} \int_{[0,1]}
\left(\int_{[0,1]} g(x_0,z) dz\right)^2 dx_0. \]

\noindent This last integral is positive because $\int_x f(x,y)$ is not
equal to a constant for almost every $y$.  It follows that $t(S_k, -)$ is
strictly convex on $I(f,c)$ so $t(S_k,f) > t(S_k,c)$.  
\end{enumerate}
\end{proof}
%}}}

\section{Derivatives of $C^N$ class functions}\label{S4}

The main goal of this section is to prove Theorem \ref{thm:main}. As the
proof of Theorem \ref{thm:main} is long and technical, we begin with an
overview of the ingredients that will be used to prove it. The main
ingredients have been separated out into subsections for ease of
presentation. We begin by investigating the derivatives of smooth class
functions as developed in Section \ref{Sdiff}. To explain that
connection, consider the differential equation of Theorem \ref{thm:main}
one direction at a time -- i.e., $d^{N+1}F(f;g,g,\dots,g) = 0$ for all $f
\in \W$ and some fixed $g \in \W$. This differential equation has
solutions $F$ that satisfy:
\begin{equation*}
F(g) = F(0) + dF(0;g) + \frac{d^2F(0;g,g)}{2!} + \dots +
\frac{d^nF(0;g,\dots, g)}{n!}.
\end{equation*}

\noindent In other words, $F$ is determined by the initial data of its
derivatives at $0$.  From this perspective the differential equation of
the main theorem could have an uncountable dimensional space of solutions
corresponding to the different possible derivatives at $0$.  
In fact, this is not the case because any solution $F$ to Theorem
\ref{thm:main} is a smooth class function, and the derivatives of any
smooth class function satisfy the following two important symmetry
properties. 

\begin{definition}
Fix $n \in \N$. A functional $\Lambda: \W^n \to \R$ is said to be
\begin{itemize}
\item {\it symmetric} if for all permutations $\tau \in S_n$,
$\Lambda(g_1, \dots, g_n) = \Lambda(g_{\tau(1)}, \dots, g_{\tau(n)})$;
\item \textit{$S_{[0,1]}$-invariant} if $\Lambda(g_1^\sigma, \dots,
g_n^\sigma) = \Lambda(g_1, \dots, g_n)$ for all Lebesgue measure
preserving bijections $\sigma: [0,1] \to [0,1]$.
\end{itemize}
\end{definition}

\begin{prop}\label{prop:Cn_consistent}
If $F : \W \to \R$ is a $C^n$ class function for some integer $n>0$, then
$d^n F(0; g_1, \dots, g_n)$ is a symmetric $S_{[0,1]}$-invariant
multilinear functional.
\end{prop}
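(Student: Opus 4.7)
The three claims—multilinearity, symmetry in the arguments $g_i$, and $S_{[0,1]}$-invariance—split naturally. The first two are direct invocations of Lemma \ref{lem:std_gateaux}: since $F$ is $C^n$ at $0$, part (ii) gives multilinearity of $d^n F(0;-,\dots,-)$ on $\Adm(0)$ (hence a unique extension to $\mathrm{span}_\R \Adm(0)$), and part (i) gives the permutation symmetry in the $g_i$. So the real content is the $S_{[0,1]}$-invariance.

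For the $S_{[0,1]}$-invariance, the plan is to exploit the identity $F(h) = F(h^\sigma)$ (valid for $h \in \W$ because $F$ is a class function and $\sigma \in S_{[0,1]}$ is an invertible measure-preserving map) at $h = \lambda_1 g_1 + \dots + \lambda_n g_n$ for small nonnegative $\lambda_i$. First I would note that the operation $g \mapsto g^\sigma$ is linear, so $h^\sigma = \sum_i \lambda_i g_i^\sigma$, and it preserves $\W$, so $g_i \in \Adm(0)$ implies $g_i^\sigma \in \Adm(0)$. By the remark after Definition \ref{Dgateaux}, there is a common neighborhood $U_0 \subset \R^n_{\ge 0}$ of the origin on which both $h = \sum \lambda_i g_i$ and $\sum \lambda_i g_i^\sigma$ lie in $\W$; on this neighborhood the auxiliary functions $\mathcal{F}_{0,\mathbf g}$ and $\mathcal{F}_{0,\mathbf g^\sigma}$ from Definition \ref{DCnatf} agree identically.

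Next I would identify the iterated mixed partial $\partial_{\lambda_1}\cdots\partial_{\lambda_n} \mathcal{F}_{0,\mathbf g}$ at the origin with $d^n F(0;g_1,\dots,g_n)$. This is done by induction: from the definition of higher G\^ateaux derivatives,
\begin{equation*}
\partial_{\lambda_{k+1}}\cdots\partial_{\lambda_n}\,\mathcal{F}_{0,\mathbf g}(\lambda_1,\dots,\lambda_k,0,\dots,0) \;=\; d^{n-k}F\!\Bigl(\tsum_{i\le k}\lambda_i g_i;\,g_{k+1},\dots,g_n\Bigr),
\end{equation*}
where $\tsum$ denotes $\sum$; the base case $k=n$ is trivial and each inductive step is the definition of a one-directional G\^ateaux derivative, valid since $\mathcal{F}_{0,\mathbf g}$ extends to a $C^n$ function on $\R^n$ (or, equivalently, since $\partial_{\lambda_k}$ of a $C^n$ function is still differentiable in the other variables). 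Taking the final $\partial_{\lambda_1}$ at $\lambda_1 = 0$ yields $d^n F(0;g_1,\dots,g_n)$. The same identification applied to $\mathbf g^\sigma$ yields $d^n F(0;g_1^\sigma,\dots,g_n^\sigma)$. Since $\mathcal{F}_{0,\mathbf g} \equiv \mathcal{F}_{0,\mathbf g^\sigma}$ on $U_0$, the mixed partials at $0$ coincide, and we conclude
\begin{equation*}
d^n F(0;g_1,\dots,g_n) \;=\; d^n F(0;g_1^\sigma,\dots,g_n^\sigma).
\end{equation*}

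The only delicate point is bookkeeping: one must verify that the mixed-partial identification above is legitimate on the convex, possibly one-sided domain $U_0 \cap \R^n_{\ge 0}$, rather than on a full open neighborhood of $0$ in $\R^n$. The $C^n$-at-$f$ assumption is designed precisely to bypass this: it provides a $C^n$ extension to all of $\R^n$, so mixed partials at the origin exist unambiguously and equal the iterated one-sided G\^ateaux derivatives by continuity. I expect no other obstacles.
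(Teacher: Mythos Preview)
Your proposal is correct and follows essentially the same approach as the paper: symmetry and multilinearity are read off Lemma~\ref{lem:std_gateaux}, and $S_{[0,1]}$-invariance comes from the identity $F(h)=F(h^\sigma)$ pushed through to the derivatives. The paper simply asserts the more general equation $d^nF(f^\sigma; g_1^\sigma,\dots,g_n^\sigma)=d^nF(f;g_1,\dots,g_n)$ and specializes to $f=0$, whereas you carefully unpack this at $f=0$ via the auxiliary functions $\mathcal{F}_{0,\mathbf g}$ and their $C^n$ extensions---a welcome elaboration of what the paper leaves implicit.
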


\begin{proof}
By part (i) of Lemma \ref{lem:std_gateaux}, we know for $C^n$ functions
$F: \W \to \R$ that mixed $n$th partial G\^ateaux derivatives are equal.
Therefore,  $d^n F(0; g_1, \dots, g_n)$ is symmetric.  By part (ii) of
Lemma \ref{lem:std_gateaux}, we also get that the derivative is
multilinear.

Next, note that if $F : \W \to \R$ is an $n$-times G\^ateaux
differentiable class function, $\sigma \in S_{[0,1]}$, and $g_i \in
\Adm(f)$ for $f \in \W$, then $g_i^\sigma \in \Adm(f^\sigma)$ and
\begin{equation}
d^nF(f^\sigma; g_1^\sigma, \dots, g_n^\sigma) = d^nF(f;g_1, \dots, g_n).
\end{equation}

\noindent Applying this equation to $f = 0$, we obtain that $d^n
F(0;g_1,\dots, g_n)$ is $S_{[0,1]}$-invariant.
\end{proof}

Let $\lie{X}_n$ denote the vector space of symmetric
$S_{[0,1]}$-invariant multilinear functionals $\Lambda : \W^n \to \R$ for
$n \geq 1$. Note by Proposition \ref{prop:Cn_consistent} that the
derivatives at zero of the solutions of the differential equation in
Theorem \ref{thm:main} all lie in $\lie{X}_n$. In Section
\ref{section:cc}, we study the space $\lie{X}_n$ via its image under
linear maps $C_{n,p} : \lie{X}_n \to X_{n,p}$ for $p \geq 2$. Here,
$X_{n,p} = \R^{\HH_n^{(p)}}$ with $\HH_n^{(p)}$ the set of isomorphism
classes of multigraphs with $n$ edges and $p$ vertices. Notice that
$\HH_n^{(p)}$ is not necessarily a subset of $\HH_n$ because it allows
for isolated vertices.
We show for each $\Lambda \in \lie{X}_n$ that the value of $\Lambda$
restricted to $n$-tuples of edge-weighted graphs is determined by
$C_n(\Lambda) := (C_{n,p}(\Lambda))_{p \geq 2}$. In addition, we show for
$p|q$ that there are linear relations $\pi_{n,q \to p}$ called the
\textit{consistency constraints} mapping $X_{n,q}$ to $X_{n,p}$, which
send $C_{n,q}(\Lambda)$ to $C_{n,p}(\Lambda)$. The upshot is that the
image $C_n(\lie{X}_n)$ has dimension at most $|\HH_n|$.  

In Section \ref{section:mainproof}, we first note that the $n$th
G\^ateaux derivatives at $0$ of $\{t(H,-)\}_{H \in \HH_n}$ are in
$\lie{X}_n$.  Next we show that $C_n(d^n t(H,-)(0;-))$ are linearly
independent for $H \in \HH_n$. Therefore by counting dimensions, the
image $C_n(\lie{X}_n)$ is spanned by the $n$th G\^ateaux derivatives of
homomorphism densities for $H \in \HH_n$. Finally, we collect the
different solutions for $n \leq N$ and use the continuity assumptions as
in Proposition \ref{prop:cont_func_char} to conclude the proof.

%{{{1 Section 4.1 - Symmetric S_[0,1]-invariant multilinear functionals
\subsection{Symmetric $S_{[0,1]}$-invariant multilinear
functionals}\label{section:cc}

The main goal of this section is to investigate symmetric
$S_{[0,1]}$-invariant multilinear functionals $\Lambda: \W^n \to \R$,
where $n \in \N$.

\begin{definition}
For integers $1 \le a < b \le p$, define 
\begin{equation}
e^p_{(a,b)} := {\bf 1}_{\left( \frac{a-1}{p}, \frac{a}{p} \right] \times
\left( \frac{b-1}{p}, \frac{b}{p} \right]} + {\bf 1}_{\left(
\frac{b-1}{p}, \frac{b}{p} \right] \times \left( \frac{a-1}{p},
\frac{a}{p} \right]}.
\end{equation}

\noindent Now define $E_p := \{e^p_{(a,b)} : 1 \le a < b \le p\}$, and
\begin{equation}
\WW_p := \W \cap {\rm span}_\R E_p, \qquad \WW_{\bf p} :=
\bigcup_{p=1}^\infty \WW_p.
\end{equation}
\end{definition}

We classify the different symmetric $S_{[0,1]}$-invariant multilinear
functionals restricted to $\WW_{\bf p}^n$ by defining constants that
determine them. Let $\Lambda: \W^n \to \R$ be a multilinear functional.
By multilinearity, the restriction of $\Lambda$ to $\WW_{\bf p}$ is
determined by the infinite set of constants $(\Lambda(e))_{e \in
E^n_{p},\ p \geq 2}$. Surprisingly, once we assume that $\Lambda$ is
symmetric and $S_{[0,1]}$-invariant, $\Lambda$ is determined by only a
finite number of these constants.  To prove this, we investigate the
relations between the $\Lambda(e)$ for $e \in E^n_p$.

We begin by defining and explaining some basic notation that is used in
the proof of Theorem \ref{thm:main}. First note that there is a group
action of the symmetric group $S_p$ on $E_p$ and therefore on $E_p^n$,
defined by $\sigma (e^p_{(a,b)}) := e^p_{(\sigma(a),\sigma(b))}$.  There
is also an $S_n$ action on $E_p^n$ defined by permuting the coordinates
of $(e^p_{(a_l,b_l)})_{l=1}^n$.  The $S_p$ and $S_n$ actions commute so
together they define an $S_p \times S_n$ action on $E_p^n$.

There is a natural map way to associate a multigraph to any tuple $x =
(e^p_{(a_l,b_l)})_{l=1}^n \in E_p^n$, with vertex set $\{1, \dots, p\}$
and edges $\{x(l): (x(l)_s,x(l)_t) = (a_l,b_l), 1 \le l \leq n \}$.
Denote this multigraph by $\Gamma_{n,p}((e^p_{(a_l,b_l)})_{l=1}^n)$.
Given any multigraph $G$, let $[G]$ denote the equivalence class of
multigraphs isomorphic to $G$.

For simplicity, we will often drop either of the subscripts in the
notation for $\Gamma$ when there is no chance of confusion.  If $h \in
\HH_n^{(p)}$, let $\novert{h} \in \HH_n$ be the graph obtained by
removing the isolated vertices of $h$. Similarly, if $H$ is any
multigraph, then denote by $\novert{H}$ the multigraph obtained by
removing the isolated vertices of $H$. The following proposition
summarizes the basic properties of the map $\Gamma_{n,p}$.

\begin{prop}[Properties of $\Gamma_{n,p}$]\label{prop:gamma}\hfill
\begin{enumerate}[(i)]
\item The map sending $x \in E^n_p$ to $[\Gamma_{n,p}(x)] \in
\HH_n^{(p)}$ is surjective.
\item The fibers of the map sending $x \in E^n_p$ to $[\Gamma_{n,p}(e)]
\in \HH_n^{(p)}$ are precisely the $S_p\times S_n$-orbits.
\item The map $h \to \novert{h}$ sending an element of $\HH_n^{(p)}$ to
$\HH_n$ by removing the isolated vertices is injective.  In addition, it
is bijective if and only if $p \geq 2n$.  
\end{enumerate}
\end{prop}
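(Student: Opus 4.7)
The plan is to verify each part directly from the combinatorial definitions, since the map $\Gamma_{n,p}$ is essentially a bookkeeping device translating between tuples of indicator-like graphons and the combinatorial data of labeled multigraphs on $\{1,\dots,p\}$ with $n$ edges.

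For part (i), I would just reverse the construction: given $h \in \HH_n^{(p)}$, pick any representative with vertex set $\{1,\dots,p\}$, list its edge multiset as $\{(a_l,b_l)\}_{l=1}^n$ with $a_l < b_l$, and observe that $x := (e^p_{(a_l,b_l)})_{l=1}^n \in E^n_p$ satisfies $\Gamma_{n,p}(x) = h$. For part (ii), I would split into two directions. First, that the $S_p \times S_n$ action preserves the isomorphism class: the $S_n$ action permutes coordinates and hence does not change the underlying multiset of edges, while $\sigma \in S_p$ sends $\Gamma(x)$ to a graph on the same vertex set $\{1,\dots,p\}$ that is isomorphic to $\Gamma(x)$ via $\sigma$ itself. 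Conversely, if $[\Gamma(x)] = [\Gamma(y)]$, pick a multigraph isomorphism between the two representatives on $\{1,\dots,p\}$; since both have the same underlying vertex set, this isomorphism is realized by some $\sigma \in S_p$, after which the edge multisets of $\sigma \cdot y$ and $x$ coincide; finally choose $\tau \in S_n$ reordering the coordinates of $\sigma \cdot y$ to match $x$ exactly.

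For part (iii), injectivity follows from a vertex-count: if $\novert{h_1} \cong \novert{h_2}$ for $h_1,h_2 \in \HH_n^{(p)}$, then both have exactly $p - |V(\novert{h_i})|$ isolated vertices, so any isomorphism between the non-isolated parts extends (by a bijection of the isolated-vertex sets) to $h_1 \to h_2$. For the bijectivity equivalence, the decisive observation is that every $h' \in \HH_n$ satisfies $|V(h')| \le 2n$, since no isolated vertices and $n$ edges together give at most $2n$ endpoints; equality occurs precisely for $n \cdot K_2$, the disjoint union of $n$ edges. Hence if $p \ge 2n$, any $h' \in \HH_n$ has $|V(h')| \le p$ and we can produce a preimage by adjoining $p - |V(h')|$ isolated vertices, whereas if $p < 2n$ the element $n \cdot K_2 \in \HH_n$ has too many vertices to arise as $\novert{h}$ for any $h \in \HH_n^{(p)}$. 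There is no substantive obstacle: the only mild care needed is in part (ii), where one must distinguish the roles of $S_p$ (acting on vertex labels) and $S_n$ (acting on the ordering of the edge list), and confirm that together they exactly realize every multigraph isomorphism.
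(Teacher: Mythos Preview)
Your proposal is correct and follows essentially the same approach as the paper's proof: both argue surjectivity in (i) by choosing a representative on $\{1,\dots,p\}$ and reading off an edge list, handle (ii) by showing the $S_n$-action fixes the multigraph while the $S_p$-action realizes isomorphisms (and conversely extracting $(\sigma,\tau)$ from any isomorphism), and prove (iii) by noting that the extremal element of $\HH_n$ is the disjoint union of $n$ edges with $2n$ vertices. Your write-up is slightly more explicit on the injectivity in (iii) than the paper, which simply asserts it.
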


\begin{proof}\hfill
\begin{enumerate}[(i)]
\item Let $G$ denote a representative of a class in $\HH_n^{(p)}$, and
fix bijections
\[ \phi: V(G) \to \{1, \dots, p\}, \qquad \psi : E(G) \to \{ 1, \dots, n
\}. \]

\noindent Now define $x = (e^p_{(a_i, b_i)})_{i=1}^n \in E_p^n$ via: $a_i
:= \phi(\psi^{-1}(i)_s), b_i := \phi(\psi^{-1}(i)_t)$. Clearly
$\Gamma_{n,p}(x)$ and $G$ are isomorphic so $\Gamma_{n,p}$ is surjective
onto $\HH_n^{(p)}$.

\item Let $x \in E_p^n$, $\sigma \in S_p$, and $\tau \in S_n$.  Then
$\Gamma_{n,p}(x)$ and $\Gamma_{n,p}(\tau(x))$ are the same multigraph and
$\Gamma_{n,p}(x)$ and $\Gamma_{n,p}(\sigma(x))$ are clearly isomorphic
multigraphs.

Conversely, if $\Gamma_{n,p}(x_1)$ is isomorphic to $\Gamma_{n,p}(x_2)$
then there exist two maps $V_f:V(\Gamma_{n,p}(x_1)) \to
V(\Gamma_{n,p}(x_2))$ and $E_f: E(\Gamma_{n,p}(x_1)) \to
E(\Gamma_{n,p}(x_2))$ that form an isomorphism of multigraphs $f$.  Note
that $V_f \in S_p$ because it is a bijection $\{1, \dots, p\} \to \{1,
\dots, p\}$.  If $x_1 = (e^p_{(a_l,b_l)})_{l=1}^n$ and $x_2 =
(e^p_{(c_l,d_l)})_{l=1}^n$ then the bijection $E_f$ defines a bijective
map $\tau: \{1, \dots, n\} \to \{1, \dots, n\}$ by sending $i \to j$ if
the $i$th edge $(a_i,b_i)$ maps to the $j$th edge $(c_j,d_j)$.  Then,
$(V_f,\tau) \in S_p \times S_n$ and $(V_f,\tau)(x_1) = x_2$.  

\item The map $h \to \novert{h}$ is clearly injective.  The graph with
the most number of vertices in $\HH_n$ is the one with $n$ disjoint edges
-- i.e., $A_2^{\coprod n}$.  This has $2n$ vertices and so when $p \geq
2n$, $h \to \novert{h}$ surjects onto $\HH_n$, while $A_2^{\coprod n}$
does not lie in the image when $p < 2n$.
\end{enumerate}
\end{proof}

For fixed $p \geq 2$, we now show how Proposition \ref{prop:gamma} allows
us to define constants associated to $\Lambda \in \lie{X}_n$ indexed by
$h \in \HH_n^{(p)}$, that carry all of the information of
$(\Lambda(x))_{x \in E^n_p}$. In particular, since $\Lambda$ is symmetric
and $S_{[0,1]}$-invariant, it is invariant under the $S_n$ and $S_p$
actions respectively.

\begin{definition}\label{remark:part_class}
Let $\Lambda: \W^n \to \R$ be a symmetric $S_{[0,1]}$-invariant
multilinear functional, i.e., $\Lambda \in \lie{X}_n$. Then for any $p
\geq 2$ and $h \in \HH_n^{(p)}$, pick by Proposition \ref{prop:gamma}(i)
an $x \in E_p^n$ such that $h = [\Gamma_{n,p}(x)]$, and define
\begin{equation*}
C_{n,p}(\Lambda)(h) := \Lambda(x).
\end{equation*}
The value of $C_{n,p}(\Lambda)(h)$ does not depend on the choice of $x$
by Proposition \ref{prop:gamma}(ii).

\noindent Also define the map $C_n : \lie{X}_n \to \prod_{p \geq 2}
X_{n,p}$ where $X_{n,p} = \R^{\HH_n^{(p)}}$ by
\begin{equation*}
C_n(\Lambda) := (C_{n,p}(\Lambda))_{p \geq 2}, \qquad \mbox{where} \qquad
C_{n,p}(\Lambda) := (C_{n,p}(\Lambda)(h))_{h \in \HH_n^{(p)}}.
\end{equation*}
\end{definition}

The following theorem reveals the relations between the vectors
$C_{n,p}(\Lambda)$.  We shall see that the vectors necessarily satisfy
certain compatibility conditions, for a fixed $\Lambda$ and varying $p
\in \N$. More surprisingly, we now show that for each $n, k \in \N$,
there exists a {\it single} matrix that determines the compatibility
constraints, across {\it all} $\Lambda \in \lie{X}_n$ and all $p \geq 2$.

\begin{theorem}[Consistency Relations]\label{thm:consistency}
Fix $n,k \in \N$. There exists a fixed matrix $\pi_{n,k}  \in \Z_{\geq
0}^{\HH_n \times \HH_n}$ such that for any $p \geq 2$, $\Lambda \in
\lie{X}_n$, and $g \in H_n^{(p)}$, we have:
\begin{equation}\label{Econsistency2}
C_{n,p}(\Lambda)(g) = \sum_{h \in \HH_n^{(kp)}}
\pi_{n,k}(\novert{g},\novert{h}) C_{n,kp}(\Lambda)(h), \qquad \forall
g \in \HH_n^{(p)}.
\end{equation}
In addition, $\pi_{n,k}(\novert{g}, \novert{h})$ is nonzero only if
there exists $H \in \novert{h}$ that surjects onto $G \in \novert{g}$
as a multigraph, and $\pi_{n,k}(\novert{g}, \novert{g}) > 0$.
\end{theorem}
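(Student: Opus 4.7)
The strategy is to subdivide each block graphon $e^p_{(a,b)}$ into $k^2$ finer block graphons, use multilinearity of $\Lambda$ to expand $\Lambda(x)$ accordingly, and then regroup by isomorphism class. The essential combinatorial observation is that each such subdivision realizes a canonical ``vertex blow-up'' of the underlying multigraph.

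First, for any $a<b$ and integer $k\geq 1$, directly from the definition
\[ e^p_{(a,b)} = \sum_{(c,d) \in R_k(a,b)} e^{kp}_{(c,d)}, \qquad R_k(a,b) := \{(c,d) : k(a-1) < c \le ka,\ k(b-1) < d \le kb\}, \]
where automatically $c<d$ since $a<b$. Fix $g\in \HH_n^{(p)}$ and a representative $x = (e^p_{(a_l,b_l)})_{l=1}^n \in E_p^n$ with $[\Gamma_{n,p}(x)] = g$. Substituting the refinement identity into $\Lambda(x)$ and using multilinearity,
\[ C_{n,p}(\Lambda)(g) = \Lambda(x) = \sum_{y \in R_k(x)} \Lambda(y) = \sum_{h \in \HH_n^{(kp)}} N(x,h,k) \cdot C_{n,kp}(\Lambda)(h), \]
where $R_k(x) := \{y = (e^{kp}_{(c_l,d_l)})_l \in E_{kp}^n : (c_l,d_l) \in R_k(a_l,b_l)\ \forall l\}$ and $N(x,h,k) := |\{y \in R_k(x) : [\Gamma_{n,kp}(y)] = h\}|$.

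The heart of the argument is to show that $N(x,h,k)$ depends only on $\novert{g}$ and $\novert{h}$. An element $y \in R_k(x)$ is precisely a vertex blow-up of $G := \Gamma_{n,p}(x)$: each vertex $a \in V(G)$ is replaced by the block $B_a := \{k(a-1)+1,\dots,ka\}$ of size $k$ (independent of $a$), and the $l$-th edge of $G$ is routed through one of the $k^2$ pairs in $B_{a_l} \times B_{b_l}$. Isolated vertices of $G$ therefore contribute only isolated vertices to $\Gamma(y)$, which are invisible to $\novert{\Gamma(y)}$. Formally, given another tuple $x^\circ \in E_{p^\circ}^n$ representing $\novert{g}$ on $p^\circ = |V(\novert{g})|$ vertices via a relabeling $\phi$ of non-isolated vertices, the offset-preserving map
\[ (c_l,d_l) \longmapsto \bigl(k(\phi(a_l)-1) + (c_l - k(a_l-1)),\ k(\phi(b_l)-1) + (d_l - k(b_l-1))\bigr) \]
defines a bijection $R_k(x) \to R_k(x^\circ)$ preserving $\novert{[\Gamma(\cdot)]}$. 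In the other direction, since the total vertex count $kp$ of any element of $\HH_n^{(kp)}$ is fixed, the map $h \mapsto \novert{h}$ is injective on $\HH_n^{(kp)}$. Combining these two observations shows $N(x,h,k)$ depends only on $(\novert{g},\novert{h}) \in \HH_n \times \HH_n$; defining $\pi_{n,k}(\novert{g},\novert{h})$ to be this common value yields the required identity \eqref{Econsistency2}.

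The non-vanishing claims then fall out immediately. If $\pi_{n,k}(\novert{g},\novert{h}) > 0$, pick any witnessing $y$; the ``parent'' map $v \mapsto \lceil v/k \rceil$ on vertices together with the tautological edge correspondence $(c_l,d_l) \leftrightarrow (a_l,b_l)$ is a node-and-edge multigraph homomorphism $\Gamma(y) \to \Gamma(x)$ that is surjective on both vertices and edges, and which restricts to a surjection of representatives $\novert{h} \to \novert{g}$. For positivity of $\pi_{n,k}(\novert{g},\novert{g})$, take the distinguished refinement $(c_l,d_l) := (k(a_l-1)+1,\, k(b_l-1)+1)$ for all $l$, which places every edge on the ``first'' copy of each endpoint and produces $\novert{\Gamma(y)} \cong \novert{G} = \novert{g}$ via the bijection $v \mapsto k(v-1)+1$. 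The main obstacle is the middle step: since $kp$ varies with $p$, one must verify carefully that adding or removing isolated vertices of $g$ is precisely compensated by adding or removing isolated vertices of the refinements, which is the content of the offset-preserving bijection above.
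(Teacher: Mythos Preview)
Your proof is correct and follows essentially the same approach as the paper's: refine each $e^p_{(a,b)}$ into $k^2$ blocks, expand $\Lambda(x)$ by multilinearity, and count the resulting tuples by isomorphism class, then observe that the parent map $v\mapsto\lceil v/k\rceil$ together with the edge bijection witnesses the surjection, while the choice $(c_l,d_l)=(k(a_l-1)+1,k(b_l-1)+1)$ witnesses diagonal positivity. The only cosmetic difference is in how $p$-independence of the counts is argued: the paper first invokes $S_p\times S_n$-invariance to pass from $x$ to $g$ and then separately compares representatives at $p$ and $p'\ge p$, whereas you accomplish both at once via your offset-preserving bijection $R_k(x)\to R_k(x^\circ)$ to a canonical representative on $|V(\novert{g})|$ vertices---a mild streamlining of the same idea.
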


\begin{proof}
Write the basis elements of $E_p$ in terms of $k^2$ basis elements in
$E_{kp}$ as follows:
\begin{equation}\label{eqn:splitting}
e^p_{(a,b)} = \sum_{i,j=1}^k e^{kp}_{(k(a-1) + i,k(b-1) + j)}.
\end{equation}

\noindent Now choose any $x = (e^p_{(a_l,b_l)})_{l=1}^n \in E_p^n$ such
that $g = [\Gamma_{n,p}(x)]$, and expand
\begin{equation}\label{eqn:consistency_index}
C_{n,p}(\Lambda)(g) = \Lambda((e^{p}_{(a_l,b_l)})_{1 \leq l \leq n}) =
\sum_{(i_l,j_l)_l \in \{1, \dots, k\}^{2n}} \Lambda((e^{kp}_{(k(a_l-1) +
i_l,k(b_l-1) + j_l)})_{1 \leq l \leq n})
\end{equation}

\noindent by splitting up each basis element using equation
\eqref{eqn:splitting} and multilinearity. For every choice of
$(i_l,j_l)_l \in \{1, \dots, k\}^{2n}$, define a graph
\begin{equation}\label{Eindextograph}
H((i_l,j_l)_{l=1}^n) =  \Gamma_{n,kp}((e^{kp}_{(k(a_l-1) + i_l,k(b_l-1) +
j_l)})_{1 \leq l \leq n}).
\end{equation}

\noindent We can then rewrite Equation \eqref{eqn:consistency_index} as
\begin{equation*}
C_{n,p}(\Lambda)(g) = \Lambda((e^{p}_{(a_l,b_l)})_{1 \leq l \leq n}) =
\sum_{(i_l,j_l)_l \in \{1, \dots, k\}^{2n}}
C_{n,kp}(\Lambda)([H((i_l,j_l)_{l=1}^n)]).
\end{equation*}

Let the map $\alpha_{n,k,p,x}: \{1, \dots k \}^{2n} \to \HH_n$ be defined
by sending $(i_l,j_l)_{l=1}^n$ to $[\novert{H}((i_l,j_l)_{l=1}^n)]$ and
let $M(n,k,p,x,h)$ be the size of the fiber of $\alpha_{n,k,p,x}$ over $h
\in \HH_n$.  Then
\begin{equation*}
C_{n,p}(\Lambda)(g) = \sum_{h \in \HH_n^{(kp)}} C_{n,kp}(\Lambda)(h)
\cdot M(n,k,p,x,\novert{h}).
\end{equation*}

\noindent Using the $S_p \times S_n$ action on $\alpha_{n,k,p,x}$, one
verifies that $M(n,k,p,x,h) = M(n,k,p,(\sigma,\tau)(x),h)$ for all
$(\sigma,\tau) \in S_p \times S_n$. Hence by Proposition
\ref{prop:gamma}(ii), $M(n,k,p,[\Gamma_{n,p}(x)],h) :=
M(n,k,p,x,h)$ is well-defined, and
\begin{equation*}
C_{n,p}(\Lambda)(g) = \sum_{h \in \HH_n^{(kp)}} C_{n,kp}(\Lambda)(h)
\cdot M(n,k,p,g,\novert{h}).
\end{equation*}

We now claim that for any $g \in \HH_n^{(p)}$ and $h \in \HH_n$,
$M(n,k,p,g,h) = \pi_{n,k}(\novert{g},h)$ for some fixed matrix
$\pi_{n,k} \in \Z_{\geq 0}^{\HH_n \times \HH_n}$ independent of $p$ and
$\Lambda$.  Indeed, given integers $2 \leq p \leq p'$ and $g \in
\HH_n^{(p)}$, choose $x := (e^p_{(a_l,b_l)})_{l=1}^n$ with
$[\Gamma_{n,p}(x)] = g$. Now define $x' := (e^{p'}_{(a_l,b_l)})_{l=1}^n$
and $g' := [\Gamma_{n,p'}(x')]$; then $[\novert{g}] = [\novert{g'}]$.
Moreover, $\alpha_{n,k,p,x}((i_l,j_l)_{l=1}^n) =
\alpha_{n,k,p',x'}((i_l,j_l)_{l=1}^n)$ for all $((i_l,j_l)_{l=1}^n \in
\{1, \dots, k\}^{2n}$.  Therefore since $M(n,k,p,g,h)$ is the size of the
fiber of $\alpha_{n,k,p,x}$ and $M(n,k,p',g',h)$ is the size of the
fiber of $\alpha_{n,k,p',x'}$ over $h \in \HH_n$, $M(n,k,p,g,h) =
M(n,k,p,g',h) =:  \pi_{n,k}(\novert{g},h)$. 

It remains to show the last sentence of the result. Suppose $\pi_{n,k}(g,
h) > 0$ for $g,h \in \HH_n$. Pick arbitrary fixed $p \ge 2n$ and $x =
(e^p_{(a_l,b_l)})_{l=1}^n \in E_p^n$ such that
$[\novert{\Gamma}_{n,p}(x)] = g$ by Proposition \ref{prop:gamma}. Then
there exists $(i_l,j_l)_{l=1}^n \in \{1, \dots, k\}^{2n}$ such that
$h = [\novert{H}((i_l,j_l)_{l=1}^n)]$ (see Equation
\eqref{Eindextograph}) by the above analysis. There is an obvious
surjective map from $H((i_l,j_l)_{l=1}^n)$ to $\Gamma_{n,p}(x)$ given by
sending the vertex $a$ to the vertex $\lfloor(a-1)/k\rfloor + 1$ and
sending the $l$th edge of $H((i_l,j_l)_{l=1}^n)$ to the $l$th edge of
$\Gamma_{n,p}(x)$. Therefore, $\pi_{n,k}(g,h) > 0$ implies that there
exists a surjective map from a multigraph $H \in h$ to a multigraph $G
\in g$. In addition, picking $(i_l, j_l) = (1,1)$ for $1 \le l \le n$
shows that $\pi_{n,k}(g,g) > 0$.
\end{proof}

\begin{definition}
Given $n,k, 2 \leq p \in \N$, define the map $\pi_{n,kp \to p} : X_{n,kp}
\to X_{n,p}$ as follows: $\pi_{n,kp \to p}$ sends the vector $A_{n,kp} =
(A_{n,kp}(h))_{h \in \HH_n^{(kp)}}$ to the vector $A_{n,p} =
(A_{n,p}(g))_{g \in \HH_n^{(p)}}$, where
\[ A_{n,p}(g) = \sum_{h \in \HH_n^{(kp)}} \pi_{n,k}(\novert{g},
\novert{h}) A_{n,kp}(h). \]

We call the linear maps $\pi_{n,kp \to p}$ the \textit{consistency
constraints}.  We also say that any vector $A = (A_{n,p})_{p \geq 2} \in
\prod_{p \ge 2} X_{n,p}$ satisfying the constraints
$A_{n,p} = \pi_{n,kp \to p}(A_{n,kp})$ is \textit{linearly consistent}.
\end{definition}

Using the consistency constraints, we now prove that multilinear
functionals $\Lambda \in \lie{X}_n$ restricted to $\WW_{\bf p}$ are
determined by $|\HH_n|$ constants. 

\begin{theorem}\label{thm:upper}
Fix $n,k \in \N$. Then the following hold.
\begin{enumerate}[(i)]
\item The matrix $\pi_{n,k}$ is triangular with positive diagonal
entries when $\HH_n$ is partially ordered by the existence of a
surjective map of multigraphs.

\item The $\pi_{n,kp \to p}$ are  surjective maps that are
invertible for $p \geq 2n$, and compatible in the following sense: given
positive integers $n,k_1,k_2,2 \leq p$,
\begin{equation}\label{Ecompatible}
\pi_{n,k_1 k_2 p \to p} = \pi_{n, k_2 p \to p} \circ \pi_{n, k_1 k_2 p
\to k_2 p}.
\end{equation}

\item For each $n \in \N$, the subspace $LC_n \subset \prod_{p \geq 2}
X_{n,p}$ of linearly consistent vectors
\begin{equation*}
(A_{n,p}(h))_{h \in \HH^{(p)}_n,p \geq 2} \in \prod_{p \geq 2} X_{n,p}
\end{equation*}
has dimension $|\HH_n|$.
 
\item If $\Lambda \in \lie{X}_n$, the $|\HH_n|$ components of
$C_{n,p_0}(\Lambda) \in X_{n,p_0}$ for any $p_0 \ge 2n$ determine the
value of $\Lambda$ on $\sigma(\WW_{\bf p}^n)$ for all $\sigma \in
S_{[0,1]}$.
\end{enumerate}
\end{theorem}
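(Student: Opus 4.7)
The strategy is to leverage the triangular structure of $\pi_{n,k}$ already extracted in Theorem \ref{thm:consistency} and propagate it through the inverse-limit-like system of consistency maps. Part (i) is essentially a repackaging of the last sentence of Theorem \ref{thm:consistency}: define $g \preceq h$ on $\HH_n$ iff some $H \in h$ admits a surjective node-and-edge homomorphism onto some $G \in g$. Surjectivity within $\HH_n$ forces the edge map to be a bijection (both graphs have $n$ edges), so two-sided surjections are isomorphisms and $\preceq$ is antisymmetric on isomorphism classes. Theorem \ref{thm:consistency} then says $\pi_{n,k}(g,h) = 0$ unless $g \preceq h$, with $\pi_{n,k}(g,g) > 0$, which is exactly triangularity with positive diagonal.

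Part (ii) has three pieces. Compatibility follows from observing that the splitting \eqref{eqn:splitting} for factor $k_1 k_2$ coincides with first splitting each $e^p_{(a,b)}$ by $k_2$ and then splitting each resulting basis element by $k_1$; plugging this factorization into the derivation of Theorem \ref{thm:consistency} and invoking multilinearity yields $\pi_{n,k_1 k_2 p \to p} = \pi_{n,k_2 p \to p} \circ \pi_{n,k_1 k_2 p \to k_2 p}$. Invertibility for $p \ge 2n$ is then immediate: Proposition \ref{prop:gamma}(iii) identifies both $X_{n,p}$ and $X_{n,kp}$ with $\R^{\HH_n}$ (since $p,kp \ge 2n$), and under these identifications $\pi_{n,kp\to p}$ becomes $\pi_{n,k}$, invertible by part (i). For surjectivity at arbitrary $p \ge 2$: for each $g \in \HH_n^{(p)}$ produce a distinguished $h_g \in \HH_n^{(kp)}$ with $\novert{h_g} = \novert{g}$ by adjoining $kp - |V(\novert{g})|$ isolated vertices (feasible since $|V(\novert{g})| \le p \le kp$); the square submatrix of $\pi_{n,kp\to p}$ on rows $g$ and columns $h_g$ is triangular with positive diagonal by part (i), hence already surjective onto $X_{n,p}$.

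For part (iii), fix $p_0 \ge 2n$ and show that the coordinate projection $LC_n \to X_{n,p_0}$ is a linear isomorphism. Given any $A_{n,p_0} \in X_{n,p_0}$, for arbitrary $p \ge 2$ pick a common multiple $P$ of $p$ and $p_0$ (e.g.\ $P = p p_0$), use invertibility of $\pi_{n,P \to p_0}$ from part (ii) to lift $A_{n,p_0}$ to $A_{n,P}$, then set $A_{n,p} := \pi_{n,P \to p}(A_{n,P})$. Independence of $P$ and the claim that the resulting tuple $(A_{n,p})_{p\ge 2}$ satisfies \emph{every} consistency constraint both reduce, via part (ii)'s compatibility, to comparing two candidate common multiples through their lcm. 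Combined with the obvious injectivity of the coordinate projection on $LC_n$, this yields $\dim LC_n = |\HH_n^{(p_0)}| = |\HH_n|$.

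Part (iv) then follows quickly. By $S_{[0,1]}$-invariance of $\Lambda$, determining $\Lambda$ on $\sigma(\WW_{\bf p}^n)$ reduces to determining it on $\WW_{\bf p}^n$. Any $(g_1,\dots,g_n) \in \WW_{\bf p}^n$ with $g_i \in \WW_{p_i}$ lies in $\WW_P^n$ for $P := \operatorname{lcm}(p_0, p_1, \dots, p_n)$, using $\WW_p \subset \WW_{kp}$ from \eqref{eqn:splitting}. By multilinearity and Definition \ref{remark:part_class}, $\Lambda$ on $\WW_P^n$ is determined by $C_{n,P}(\Lambda)$, which is recovered from $C_{n,p_0}(\Lambda)$ via the invertible map $\pi_{n,P \to p_0}^{-1}$ from part (ii). The main technical obstacle throughout is the bookkeeping in part (iii): one must verify that the constructed $(A_{n,p})_p$ is independent of all intermediate common multiples and satisfies \emph{every} consistency constraint, not only those used in its construction. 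Both checks reduce to compatibility via the lcm, but require careful handling to avoid circularity.
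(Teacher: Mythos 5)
Your proposal is correct and follows essentially the same route as the paper: triangularity of $\pi_{n,k}$ via the last sentence of Theorem \ref{thm:consistency}, surjectivity of $\pi_{n,kp\to p}$ by restricting to the principal submatrix on $\HH_n^{(p)}\hookrightarrow\HH_n^{(kp)}$, compatibility by factoring the splitting \eqref{eqn:splitting}, and the coordinate-projection isomorphism $LC_n \cong X_{n,p_0}$ for (iii)--(iv). The only presentational difference is that you reduce the consistency check in (iii) to an lcm argument where the paper does an explicit commutative-diagram chase; you also spell out the antisymmetry of the surjection order, which the paper leaves implicit.
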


\begin{proof}\hfill
\begin{enumerate}[(i)]
\item Note that the graphs in $\HH_n$ are partially ordered by the
existence of a surjective map of multigraphs.  By Theorem
\ref{thm:consistency}, $\pi_{n,k}([\novert{g}],[\novert{h}]) > 0$ for
any two multigraphs $g,h$ with $n$ edges only if there exists a
surjective map from $h$ to $g$. Thus $\pi_{n,k}$ is triangular when
$\HH_n$ is ordered with respect to the existence of a surjective map.  In
addition Theorem \ref{thm:consistency} states that
$\pi_{n,k}([\novert{g}],[\novert{g}]) > 0$, so the diagonal entries
of $\pi_{n,k}$ are positive.

\item It is easy to see that $\HH_n^{(p)} \hookrightarrow \HH_n^{(kp)}$.
Let the corresponding subspace of $\R^{\HH_n^{(kp)}} = X_{n,kp}$ be
called $Y_{n,p,k}$. We now claim that $\pi_{n,kp \to p} : Y_{n,p,k} \to
X_{n,p}$ is an isomorphism -- in particular, it is surjective. Indeed,
this is obvious since the restriction of $\pi_{n,kp \to p}$ to
$Y_{n,p,k}$ is given by a principal submatrix of $\pi_{n,k}$, which is
itself triangular with nonzero diagonal entries.
Now if $p \ge 2n$, the maps $\pi_{n,kp \to p}$ are invertible because
$Y_{n,p,k} = X_{n,kp}$.

Finally, to show that $\pi_{n,k_2p \to p} \circ \pi_{n,k_1k_2p \to k_2p}
= \pi_{n,k_1k_2p \to p}$, note that expanding basis elements in $E_p$ by
Equation \eqref{eqn:splitting} into basis elements elements in
$E_{k_1k_2p}$, via
\begin{equation*}
e^p_{(a,b)} = \sum_{i,j=1}^{k_1k_2} e^{k_1k_2p}_{(k_1k_2(a-1) +
i,k_1k_2(b-1) + j)}
\end{equation*}
is the same as expanding $e^p_{(a,b)}$ into basis elements in $E_{k_2p}$
and then splitting those basis elements into basis elements in
$E_{k_1k_2p}$. The proof follows by using counting arguments as in the
proof of Theorem \ref{thm:consistency}.

\item Fix $p_0 \geq 2n$. We show that the map $P : LC_n \to X_{n,p_0}$
sending $A = (A_{n,p})_{p \geq 2}$ to $A_{n,p_0}$ is a linear
isomorphism. Indeed, $P$ is injective because if $A_{n,p_0} = 0$, then by
(ii),
\[ A_{n,p} = \pi_{n, p_0 p \to p} \circ \pi_{n, p_0 p \to
p_0}^{-1}(A_{n,p_0}) = 0, \qquad \forall p \geq 2. \]

\noindent We now show that $P : LC_n \to X_{n,p_0}$ is surjective. Fix
$A_{n,p_0} \in \R^{\HH_n^{(p_0)}}$, and define for $p \geq 2$:
\[ A_{n,p} := \pi_{n, p_0 p \to p} \circ \pi_{n, p_0 p \to
p_0}^{-1}(A_{n,p_0}) \in \R^{\HH_n^{(p)}}. \]

\noindent It remains to show that $\pi_{n,kp \to p}(A_{n,kp}) = A_{n,p}$
for all $p \geq 2$ and $k \geq 1$. This follows by diagram chasing in the
following diagram, which commutes by Equation \eqref{Ecompatible}. 
\[
\xymatrix{
& & X_{n, kp p_0}
\ar[rr]^{\pi_{n,kp p_0 \to kp}}
\ar[d]|{\pi_{n,kp p_0 \to p_0}}
\ar[dll]_{\pi_{n,kp p_0 \to p_0}}
\ar[drr]^{\pi_{n,kp p_0 \to p}}
&& X_{n, kp}
\ar[d]^{\pi_{n,kp \to p}}\\
X_{n, p_0} & & X_{n,p p_0}
\ar[ll]^{\pi_{n, p p_0 \to p_0}}
\ar[rr]_{\pi_{n, p p_0 \to p}}
&& X_{n,p}
}
\]

%Now compute using the definitions and Equation \eqref{Ecompatible}:
%\begin{align*}
%\pi_{n,kp \to p}(A_{n,kp}) = &\ \pi_{n, kp \to p} \circ \pi_{n, kpp_0
%\to kp} \circ \pi_{n, kp p_0 \to p_0}^{-1}(A_{n,p_0})\\
%= &\ \pi_{n, kp p_0 \to p} \circ \pi_{n, kp p_0 \to
%p_0}^{-1}(A_{n,p_0})\\
%= &\ (\pi_{n, p p_0 \to p} \circ \pi_{n, kp p_0 \to p p_0}) \circ (
%\pi_{n, kp p_0 \to p p_0}^{-1} \circ \pi_{n,p p_0 \to
%p_0}^{-1})(A_{n,p_0})\\
%= &\ \pi_{n, p p_0 \to p} \circ \pi_{n, p p_0 \to p_0}^{-1}(A_{n,p_0}) =
%A_{n,p}.
%\end{align*}

\item Recall by $S_{[0,1]}$-invariance, that $\Lambda$ restricted to
$\sigma(\WW_{\bf p}^n)$ is determined by $C_n(\Lambda) =
(C_{n,p}(\Lambda))_{p \geq 2}$. In turn, $C_n(\Lambda) \in LC_n$ is
determined by $C_{n,p_0}(\Lambda) \in X_{n,p_0}$ for any $p_0 \geq 2n$,
by the previous part.
\end{enumerate}
\end{proof}

Though the main goal of this subsection was to prove Theorem
\ref{thm:upper} (along the way to proving Theorem \ref{thm:main}), 
a question of independent interest is to explicitly compute all entries
of the triangular matrix $\pi_{n,k}$. We conclude this part by providing
the solution to this question.

\begin{prop}\label{prop:pi_n_k}
Fix $n, k \in \N$. The entries of the matrix $\pi_{n,k} \in \Z_{\geq
0}^{\HH_n \times \HH_n}$ from Theorem \ref{thm:consistency} are given by:
\begin{equation*}
\pi_{n,k}([\novert{G}],[\novert{H}]) := \frac{1}{|\Aut(H)|}
\sum_{\psi: H \twoheadrightarrow G} \prod_{v \in V(G)} k(k-1)\cdots (k-
|\psi^{-1}(v)|+1),
\end{equation*}

\noindent where $G,H$ are arbitrary multigraphs with $n$ edges and no
isolated nodes.
\end{prop}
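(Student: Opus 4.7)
The plan is to interpret $\pi_{n,k}([\novert{G}],[\novert{H}])$ combinatorially using Theorem \ref{thm:consistency}, and then compute it via a double counting argument. First I would fix representatives $G, H$ with no isolated vertices, label the $n$ edges of $G$ as $e_l = (a_l, b_l)$ for $l = 1, \ldots, n$, set $p = |V(G)|$, and put $x = (e^p_{(a_l,b_l)})_{l=1}^n$, so that $[\Gamma_{n,p}(x)] = [G]$. By Theorem \ref{thm:consistency}, $\pi_{n,k}([\novert{G}],[\novert{H}])$ equals $M(n,k,p,[G],[H])$, which is the number of tuples $(i_l,j_l)_{l=1}^n \in \{1, \ldots, k\}^{2n}$ for which the multigraph $T = H((i_l,j_l))$ (whose $l$-th edge joins $k(a_l-1)+i_l$ and $k(b_l-1)+j_l$), after deleting isolated vertices, is isomorphic to $H$.

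To evaluate this count, I would double count pairs $(T, \xi)$ in which $T$ is such a tuple-multigraph with $[\novert{T}] = [H]$ and $\xi : H \to \novert{T}$ is an isomorphism of unlabeled multigraphs (ignoring the natural labels $1, \ldots, n$ that the construction places on the edges of $T$). Since each admissible tuple $T$ admits exactly $|\Aut(H)|$ such isomorphisms $\xi$, one tally of these pairs equals $\pi_{n,k}([\novert{G}],[\novert{H}]) \cdot |\Aut(H)|$.

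On the other hand, I would show that specifying a pair $(T,\xi)$ is equivalent to choosing a surjective node-and-edge homomorphism $\psi : H \twoheadrightarrow G$ together with an injection $\phi : V(H) \hookrightarrow \{1, \ldots, kp\}$ which, for each $v \in V(G)$, restricts to an injection of the fiber $\psi^{-1}(v)$ into the block $\{k(v-1)+1, \ldots, kv\}$. Here the vertex map of $\psi$ is the composition of $V\xi$ with $\psi_0(m) = \lceil m/k \rceil$, and $\phi$ is the composition of $V\xi$ with the inclusion $V(\novert{T}) \subseteq \{1, \ldots, kp\}$. The step requiring the most care is the reverse direction: given $(\psi,\phi)$ one must uniquely reconstruct the edge labeling of $T$. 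This works because $|E(H)| = |E(G)| = n$ and $\psi$ is surjective on edges, which forces the edge map $E\psi$ to be a bijection; hence each edge $e \in E(H)$ acquires the unique label $l$ satisfying $E\psi(e) = e_l$, and $\phi$ then fixes the two endpoints of the corresponding edge in $T$.

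Finally, for each surjection $\psi$, the number of compatible injections $\phi$ factors over the fibers of $\psi$: for each $v \in V(G)$ one independently chooses an ordered tuple of $|\psi^{-1}(v)|$ distinct elements from the $k$-element block, contributing the factor $k(k-1)\cdots(k-|\psi^{-1}(v)|+1)$. Multiplying these contributions, summing over all surjective $\psi : H \twoheadrightarrow G$, and dividing by $|\Aut(H)|$ yields the asserted formula.
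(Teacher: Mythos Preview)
Your proposal is correct and follows essentially the same approach as the paper: both arguments fix a representative $x$ for $G$, count the tuples $(i_l,j_l)$ yielding a graph isomorphic to $H$, and relate this count to surjections $\psi: H \twoheadrightarrow G$ together with block-respecting injections of $V(H)$ into $\{1,\dots,kp\}$, the latter contributing the falling-factorial products. The only organizational difference is that you package the argument as a clean double count of pairs $(T,\xi)$, whereas the paper defines sets $A(\psi)\subset E_{kp}^n$, shows they are either equal or disjoint with $A(\psi_1)=A(\psi_2)$ iff $\psi_1=\psi_2\circ\alpha$ for some $\alpha\in\Aut(H)$, and then computes $|\bigcup_\psi A(\psi)|$; your choice $p=|V(G)|$ versus the paper's $p\ge 2n$ is immaterial since Theorem~\ref{thm:consistency} already shows the count is independent of $p$.
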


\begin{proof}
Consider fixed multigraphs $G,H$ without isolated vertices such that
$[\novert{G}], [\novert{H}] \in \HH_n$.  Denote $g = [\novert{G}]$
and $h = [\novert{H}]$.  Also fix $p \ge 2n$. Then by Proposition
\ref{prop:gamma}, there exists $x = (e^p_{(a_l,b_l)})_{l=1}^n \in E_p^n$
such that $[\novert{\Gamma}_{n,p}(x)] = g$. For the remainder of this
proof, we fix such an $x$ as well as an isomorphism $\phi: G \to
\novert{\Gamma}_{n,p}(x)$. 

Now define a map $y_x: \{1, \dots, k\}^{2n}
\to E_{kp}^n$ by:
\[ y_x((i_l,j_l)_{l=1}^n) := (e^{kp}_{(k(a_l-1) + i_l,k(b_l-1) +
j_l)})_{1 \leq l \leq n}. \]

\noindent In this new notation, our aim is to compute the quantity
$\pi_{n,k}(g,h)$, which is the number of tuples $(i_l,j_l)_{l=1}^n \in
\{1, \dots, k\}^{2n}$ such that
$[\novert{\Gamma}_{n,kp}(y_x((i_l,j_l)_{l=1}^n)))] = h$.  To do so,
recall from the proof of Theorem \ref{thm:consistency} that such a tuple
also yields a graph surjection from
$\Gamma_{n,kp}(y_x((i_l,j_l)_{l=1}^n)))$ to $\Gamma_{n,p}(x)$ as follows.

Given a tuple $(i_l,j_l)_{l=1}^n \in \{ 1, \dots, k \}^{2n}$, let $y =
y_x((i_l,j_l)_{l=1}^n) \in E_{kp}^n$.  As in the proof of Theorem
\ref{thm:consistency}, there exists a surjective map $\Gamma_{n,kp}(y)
\to \Gamma_{n,p}(x)$ defined by sending the vertex $v \in \{1, \dots,
kp\}$ to $\lfloor(v-1)/k\rfloor + 1 \in \{1, \dots p\}$ and sending the
$l$th edge $y(l) \in E(\Gamma_{n,kp}(y))$ to the $l$th edge $x(l) \in
E(\Gamma_{n,p}(x))$.  Composing this surjection with the map $\phi^{-1}$
yields a surjective map $\novert{\Gamma}_{n,kp}(y) \to G$ that we call
$P_y$.  Since the map $y_x$ is an injection there is no ambiguity in
using the $P_y$ notation.  

We would like to compute the quantity $\pi_{n,k}(g,h)$ by summing over
all possible surjective maps $\novert{\Gamma}_{n,kp}(y) \to G$ that arise
in the above manner.  However, to deal with the fact that the
$\Gamma_{n,kp}(y)$ are distinct graphs, we instead begin by associating
to surjective maps from a fixed graph $H$ to $G$ different tuples $y \in
\img(y_x)$ such that the surjective map can be factored through $P_y$.
More precisely, we define a map $A: \Surj(H,G) \to 2^{E_{kp}^n}$ as
follows. Given any surjective map of multigraphs $\psi: H \to G$, the set
$A(\psi) \subset E_{kp}^n$ will consist of the distinct $y \in \img(y_x)
\subset E_{kp}^n$ such that there exists an isomorphism $\beta: H \to
\novert{\Gamma}_{n,kp}(y)$ with $\psi = P_y \circ \beta$.

We would like to compute the number of $y \in \img(y_x)  \subset
E_{kp}^n$ such that $[\novert{\Gamma}_{n,p}(y)] = h$.  We claim that any
such $y$ is in $\cup_{\psi:H \twoheadrightarrow G}A(\psi)$.  Indeed, one
can take an arbitrary isomorphism $\beta^{-1}: \novert{\Gamma}_{n,p}(y)
\to H$, and then define $\psi : = P_y \circ \beta$.  Then $\psi: H \to G$
is a surjective map and $y \in A(\psi)$.
Conversely, if $y \in A(\psi)$ then by the above analysis,
$\Gamma_{n,p}(y)$ is isomorphic to $H$ through $\beta$ and $y \in
\img(y_x)$ by definition. Therefore, to compute $\pi_{n,k}$ it suffices
to compute the size of $\cup_{\psi:H \twoheadrightarrow G}A(\psi)$.  

To do this, we first show that $|A(\psi)| = \prod_{v \in V(G)}
k(k-1)\cdots (k- |\psi^{-1}(v)|+1)$. Indeed, for every $v \in V(G)$
consider the $k(k-1)\cdots (k- |\psi^{-1}(v)|+1)$ distinct ways of
sending the vertices of $u \in \psi^{-1}(v)$ to distinct vertices
$k(V_\psi(v) - 1) + i_u$ for $i_u \in \{1, \dots, k\}$.

Each choice defines an injective map $V_\beta: V(H) \to \{1, \dots,
kp\}$.  We can now pick a tuple $(i_l,j_l)_{l=1}^n \in \{1, \dots,
k\}^{2n}$ and $E_\beta: E(H) \to \{1, \dots, n\}$ such that
$(V_\beta(e_s), V_\beta(e_t)) = (y(E_\beta(e))_s, y(E_\beta(e))_t)$ where
$y = y_x((i_l,j_l)_{l=1}^n)$.  Then $V_\beta$ and $E_\beta$ together
define an isomorphism $\beta: H \to \novert{\Gamma}_{n,kp}(y)$ such that
$\psi = P_y \circ \beta$.  It is not hard to see that every element of $y
\in A_{\psi}$ arises in this way.

To finish the proof we show that counting every $y \in A_\psi$ overcounts
by a factor of $|\Aut(H)|$. Note that each automorphism $\alpha \in
\Aut(H)$ yields a distinct surjective map $\psi \circ \alpha: H \to G$
and $A(\psi) = A(\psi \circ \alpha)$.  Conversely, assume that there
exists $y \in E_{kp}^n$ such that $y \in A(\psi_1) \cap A(\psi_2)$.
Then there are isomorphisms $\beta_i: H \to \novert{\Gamma}_{n,kp}(y)$
satisfying $\psi_i = P_y \circ \beta_i$. Therefore $\psi_1  \circ
\beta_1^{-1} \circ \beta_2 = \psi_2$, and $\beta_1^{-1} \circ \beta_2 \in
\Aut(H)$.  We conclude that the sets $A(\psi)$ are either disjoint or
equal.  In addition, $A(\psi_1) = A(\psi_2)$ if and only if there exists
$\alpha \in \Aut(H)$ such that $\psi_1 = \psi_2 \circ \alpha$.
The result now follows.
\comment{
Therefore we finally obtain:
\begin{equation*}
\pi_{n,k}([\novert{G}],[\novert{H}]) := \frac{1}{|\Aut(H)|}
\sum_{\psi: H \twoheadrightarrow G} \prod_{v \in V(G)} k(k-1)\cdots (k-
|\psi^{-1}(v)|+1).
\end{equation*}
}
\end{proof}

\begin{remark}
In certain special cases, the formula of Proposition \ref{prop:pi_n_k} is
easy to evaluate. For instance when $[\novert{G}]=[\novert{H}]$, we
have
\begin{equation*}
\pi_{n, k}([\novert{G}],[\novert{H}]) = \frac{1}{|\Aut(G)|}
\sum_{\psi: H \twoheadrightarrow G} \prod_{v \in V(G)} k(k-1)\cdots (k-
|\psi^{-1}(v)|+1).
\end{equation*} 
However, $|\psi^{-1}(v)| = 1$ always in this case and $|\Surj(H,G)| =
|\Aut(G)|$, so this formula reduces to:
$\pi_{n,k}([\novert{G}],[\novert{H}]) = k^{|V(G)|}$.

If instead $[\novert{H}] = A_2^{\coprod n}$, then
\comment{
\begin{equation*}
\pi_{n, k}([\novert{G}],  A_2^{\coprod n})= \frac{1}{|\Aut(H)|}
\sum_{\psi: H \twoheadrightarrow G} \prod_{v \in V(G)} k(k-1)\cdots (k-
|\psi^{-1}(v)|+1).
\end{equation*} 
In this case,
}
$|\Aut(H)| = 2^n n!$ and each of the $2^n n!$ maps $\psi \in \Surj(H,G)$
satisfy $|\psi^{-1}(v)| = \deg(v)$, so we obtain:
$\pi_{n,k}([\novert{G}], A_2^{\coprod n}) = \prod_{v \in V(G)} k(k-1)
\cdots (k-\deg(v)+1)$.
\end{remark}
%}}}

%{{{1 Section 4.2 - Bases of consistent vectors 
\subsection{Bases of consistent vectors}\label{section:mainproof}

Given a $C^n$ class function $F$, note by Proposition
\ref{prop:Cn_consistent} that $d^nF(0;g_1, \dots, g_n) \in \lie{X}_n$.
Now define
\begin{equation*}
T_{n,p}(F)(h) := C_{n,p}(d^n F(0,-))(h), \qquad p \geq 2, \ h \in
\HH_n^{(p)}.
\end{equation*}

\noindent By Theorem \ref{thm:consistency}, we obtain a linearly
consistent vector $T_n(F) = (T_{n,p}(F))_{p \geq 2} \in \prod_{p \geq 2}
X_{n,p}$, where $T_{n,p}(F) := (T_{n,p}(F)(h))_{h \in \HH_n^{(p)}}$.
Define $T_0(F) := F(0)$.  If $F$ is a $C^N$ class function, then $T_n(F)$
is defined for integers $n \in \{0, \dots, N\}$. We will sometimes write
\begin{equation*}
T(F) := (T_n(F))_{n \ge 0} \in \prod_{n \ge 0} \prod_{p \ge 2} X_{n,p}
\end{equation*}
for the entire collection if $F$ is smooth.  

Theorem \ref{thm:upper} then asserts that the values of the derivatives
$d^nF(0;g_1, \dots, g_n)$ for directions $g_1, \dots, g_n \in \WW_{\bf
p}$ are determined by $T(F)$.  In the case of solutions $F$ to the
differential equation of Theorem \ref{thm:main}, $T(F)$ determines $F$
restricted to $\WW_{\bf p}$ -- and by continuity, on all of $\W$.
Therefore, Theorem \ref{thm:upper} in fact already shows that the space
of solutions in Theorem \ref{thm:main} has dimension at most $\sum_{n \le
N} |\HH_n|$.

To complete the proof we now show that the $t(H,-)$ form a linearly
independent family of solutions. In order to do so, we obtain a general
formula for the derivatives of $t(H,-)$ in Proposition
\ref{prop:highergateaux}. We then use that result in Theorem
\ref{thm:hom_prop} to give a combinatorial formula for the $T(t(H,-))$
from which linear independence follows.  

We begin by stating the following useful lemma, whose proof is
straightforward after extending $\Lambda$ to its unique multilinear 
extension on $\WW^n$.

\begin{lemma}\label{lem:multilinear_deriv}
Let $\Lambda: \W^n \to \R$ be a multilinear functional.  Then the
function $F(g) := \Lambda(g,g,\dots, g)$ is G\^ateaux smooth. The
G\^ateaux derivatives of $F$ are
\[ d^kF(f;g_1,g_2, \dots, g_k) = \frac{1}{(n-k)!}\sum_{\sigma \in S_n}
\Lambda (\sigma(f,\dots, f, g_1, \dots, g_k)) \qquad \forall 0 \leq k
\leq n, \]
where $f \in \W$, $g_i \in \Adm(f)$, and $(f,\dots, f, g_1, \dots, g_k)
\in \WW^n$.
\end{lemma}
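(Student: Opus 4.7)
The plan is to reduce everything to a polynomial computation by first extending $\Lambda$ multilinearly to all of $\WW^n$. By Definition \ref{Dlinear} the multilinear functional $\Lambda : \W^n \to \R$ extends uniquely to a multilinear $\widetilde\Lambda : \WW^n \to \R$; from here on I would tacitly identify $\Lambda$ with $\widetilde\Lambda$ so that expressions like $\Lambda(f + \lambda_1 g_1 + \cdots + \lambda_m g_m, \ldots)$ make sense for all real scalars regardless of whether the arguments remain in $\W$.

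First I would verify smoothness in the sense of Definition \ref{DCnatf}. Fix $f \in \W$ and $g_1, \dots, g_m \in \Adm(f)$. Expanding
\[ \mathcal{F}_{f, \mathbf{g}}(\lambda_1, \dots, \lambda_m) = \Lambda\Bigl(f + \sum_{i=1}^m \lambda_i g_i,\ \dots,\ f + \sum_{i=1}^m \lambda_i g_i\Bigr) \]
by $n$-linearity yields a polynomial in $(\lambda_1, \dots, \lambda_m)$ of total degree at most $n$, which agrees with the original auxiliary function on the convex domain where the argument lies in $\W$. Since polynomials are $C^\infty$ on $\R^m$, this provides the required $C^n$ extension for every $n$, so $F$ is smooth at each $f \in \W$.

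For the derivative formula I would proceed by induction on $k$. The case $k = 0$ reduces to the identity $F(f) = \Lambda(f, \dots, f)$, since every $\sigma \in S_n$ fixes the tuple $(f, \dots, f)$ and the normalization $1/n!$ cancels $|S_n| = n!$. For the inductive step I would use Definition \ref{Dgateaux}: compute
\[ d^k F(f; g_1, \dots, g_k) = \lim_{\lambda \to 0} \frac{d^{k-1}F(f + \lambda g_k; g_1, \dots, g_{k-1}) - d^{k-1}F(f; g_1, \dots, g_{k-1})}{\lambda}, \]
substitute the inductive formula on the right, expand each occurrence of $f + \lambda g_k$ multilinearly, and isolate the coefficient of $\lambda^1$. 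The surviving contributions are precisely those obtained by replacing one of the $n-k+1$ slots containing $f$ by $g_k$. Summing over $\sigma \in S_n$ before and after produces the prefactor change from $1/(n-k+1)!$ to $1/(n-k)!$, matching the stated formula exactly.

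The only mildly delicate point is the bookkeeping of the factor $1/(n-k)!$. Conceptually it reflects the fact that permuting the $n-k$ copies of $f$ among themselves leaves $\Lambda(\sigma(f, \dots, f, g_1, \dots, g_k))$ unchanged, so each distinct placement of $g_1, \dots, g_k$ into $k$ of the $n$ slots (with the remaining slots filled by $f$) is counted exactly $(n-k)!$ times by the sum over $S_n$. Equivalently, the expression $(n-k)!^{-1}\sum_{\sigma \in S_n} \Lambda(\sigma(f, \dots, f, g_1, \dots, g_k))$ is precisely the coefficient of $\lambda_1 \cdots \lambda_k$ in the polynomial $\mathcal{F}_{f,\mathbf{g}}$ with $m = k$ and $\mathbf{g} = (g_1, \dots, g_k)$, so the formula can alternatively be read off in one step from the explicit polynomial expansion, bypassing the induction entirely. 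I do not expect any genuine obstacle here; the argument is essentially a careful combinatorial expansion.
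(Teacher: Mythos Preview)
Your proposal is correct and follows essentially the same route as the paper: extend $\Lambda$ multilinearly to all of $\WW^n$, then read off the derivatives either by induction or directly from the polynomial expansion of $\mathcal{F}_{f,\mathbf{g}}$. The paper in fact leaves the proof as ``straightforward after extending $\Lambda$ to its unique multilinear extension on $\WW^n$,'' so your write-up is if anything more detailed than what appears there.
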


\comment{
\begin{proof}
By multilinearity,
\begin{align*}
dF(f;g_1) &= \lim_{h \to 0} \frac{\Lambda(f + hg_1,f+hg_1,\dots, f +
hg_1) - \Lambda(f,f,\dots, f) }{h}\\ 
&= \lim_{h \to 0} \left[\Lambda(g_1, f,\dots, f) +  \Lambda(f, g_1, f,
\dots, f) + \cdots +  \Lambda(f,\dots, f,g_1) + O(h) \right]\\
&=\Lambda(g_1, f,\dots, f) +  \Lambda(f, g_1, f, \dots, f) + \cdots +
\Lambda(f,\dots, f,g_1)\\
&=\frac{1}{(n-1)!}\sum_{\sigma \in S_n} \Lambda (\sigma(f,\dots, f,
g_1)).
\end{align*}
Note that each term in the sum is also multilinear so the result follows
by induction. Finally, all G\^ateaux derivatives of order $n+1$ are zero,
so all G\^ateaux derivatives of all orders are continuous.
\end{proof}
}

\noindent We now apply Lemma \ref{lem:multilinear_deriv} to compute
derivatives of homomorphism densities $t(H,f)$.

\begin{prop}\label{prop:highergateaux}
Given $H \in \HH_n$, the functions $t(H,-)$ are G\^ateaux smooth. Their
G\^ateaux derivatives $d^nt(H,f;g_1,g_2, \dots, g_n)$ are all zero if $n
> |E(H)|$, while if $n \leq |E(H)|$, then
\begin{align*}
&\ d^nt(H,f;g_1,g_2, \dots, g_n) \\
= &\ \sum_{\substack{A \subset E(H)\\ |A| = n}} \quad \sum_{\sigma: \{1,
\dots n\} \twoheadrightarrow A} \ \int_{[0,1]^{V(H)}} \prod_{l=1}^n
g_l(x_{\sigma(l)_s}, x_{\sigma(l)_t}) \prod_{e\in E(H) \backslash A}
f(x_{e_s}, x_{e_t}) \prod_{i \in V(H)}dx_i.
\end{align*}
\end{prop}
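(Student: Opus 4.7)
The key observation is that for fixed $H$, the map
$$\Lambda\bigl((f_e)_{e \in E(H)}\bigr) := \int_{[0,1]^{V(H)}} \prod_{e \in E(H)} f_e(x_{e_s}, x_{e_t}) \prod_{i \in V(H)} dx_i$$
is a multilinear functional $\Lambda : \WW^{|E(H)|} \to \R$, where we view the edges of the multigraph $H$ as distinguishable labels indexing the arguments. By Fubini and the uniform bound $\|f_e\|_\infty \le 1$ on $\W$, $\Lambda$ is well-defined, and clearly $t(H,f) = \Lambda(f,f,\dots,f)$ where $f$ is repeated $|E(H)|$ times.

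The plan is to invoke Lemma \ref{lem:multilinear_deriv} with this $\Lambda$ and $n$ replaced by $|E(H)|$. That lemma immediately gives G\^ateaux smoothness of $t(H,-)$ and the identity that $d^n t(H,f;g_1,\dots,g_n) = 0$ for $n > |E(H)|$ (because the corresponding symmetric sum on $|E(H)|$-tuples has no slots left for $g_1,\dots,g_n$), and for $n \le |E(H)|$,
$$d^n t(H,f;g_1,\dots,g_n) = \frac{1}{(|E(H)|-n)!} \sum_{\sigma \in S_{|E(H)|}} \Lambda\bigl(\sigma(f,\dots,f,g_1,\dots,g_n)\bigr).$$

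The remaining step is combinatorial bookkeeping to rewrite the sum over $S_{|E(H)|}$ in the form stated in the proposition. Each permutation $\sigma \in S_{|E(H)|}$ determines (i) the subset $A \subset E(H)$ of size $n$ consisting of those edges into which $g_1,\dots,g_n$ are plugged, (ii) a bijection $\{1,\dots,n\} \twoheadrightarrow A$ specifying which $g_l$ lands on which edge of $A$, and (iii) an arbitrary permutation of the $(|E(H)|-n)$ copies of $f$ among the remaining edges. The last piece of data does not change the value of $\Lambda(\sigma(\cdots))$ since all those arguments equal $f$, but it contributes a multiplicity of exactly $(|E(H)|-n)!$ to the sum. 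This factor cancels the prefactor, leaving
$$d^n t(H,f;g_1,\dots,g_n) = \sum_{\substack{A \subset E(H) \\ |A|=n}} \ \sum_{\sigma : \{1,\dots,n\} \twoheadrightarrow A} \Lambda\bigl(\text{$g_l$ at $\sigma(l)$, $f$ at $E(H)\setminus A$}\bigr),$$
which after writing out $\Lambda$ explicitly is exactly the claimed formula.

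There is essentially no obstacle beyond correct bookkeeping; the only thing to be slightly careful about is that when $H$ is a genuine multigraph we must treat parallel edges as distinguishable labels for the arguments of $\Lambda$ (so that $|E(H)|$ really does equal the number of arguments), which matches the convention established earlier in the paper where $E(H)$ is a multiset. Once this convention is fixed, the extension of $\Lambda$ to $(\mathrm{span}_\R \W)^{|E(H)|}$ used implicitly in Lemma \ref{lem:multilinear_deriv} is automatic, and the derivative formula follows directly.
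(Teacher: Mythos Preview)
Your proof is correct and follows exactly the same approach as the paper: define the multilinear functional $\Lambda((f_e)_{e\in E(H)})$ with $t(H,f)=\Lambda(f,\dots,f)$ and invoke Lemma~\ref{lem:multilinear_deriv}. The paper's proof simply asserts that ``the result follows'' from that lemma, whereas you have helpfully spelled out the combinatorial bookkeeping that converts the $\frac{1}{(|E(H)|-n)!}\sum_{\sigma\in S_{|E(H)|}}$ sum into the double sum over $A\subset E(H)$ and bijections $\{1,\dots,n\}\twoheadrightarrow A$.
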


\begin{proof}
Define $\Lambda: \W^{|E(H)|} \to \R$ by
%\begin{equation*}
$\displaystyle \Lambda((f_e)_{e \in E(H)}) := \int_{[0,1]^{V(H)}}
\prod_{e \in E(H)} f_e(x_{e_s},x_{e_t}) \prod_{i \in V(H)} dx_i$.
%\end{equation*}
Then $\Lambda$ is a multilinear functional with $\Lambda(f,f,\dots,f) =
t(H,f)$, so the result follows now from Lemma \ref{lem:multilinear_deriv}
applied to $\Lambda$.
\end{proof}

If $H \in \HH_{\le N}$, Proposition \ref{prop:highergateaux} shows that
the function $t(H,-)$ is G\^ateaux smooth and satisfies
%\begin{equation*}
$d^{N+1} t(H,f; g_1, \cdots, g_{N+1}) \equiv 0$
%\end{equation*}
for $|E(H)| \leq N$ and all $g_1, \cdots, g_{N+1}$.
To prove Theorem \ref{thm:main}, we show that the space of linearly
consistent vectors in $\prod_{p \ge 2} X_{n,p}$ is spanned by
$(T_n(t(H,-)))_{H \in \HH_n}$.  Since there are exactly $| \HH_{\leq N}|$
of them, we only need to show that they are linearly independent.  This
linear independence follows from the following result, which proves a
formula relating the derivatives of $t(H,f)$ obtained above to
combinatorial quantities.  

\begin{theorem}\label{thm:hom_prop}\hfill
\begin{enumerate}[(i)]
\item Let $n \in \N$ and $p \ge 2$ be integers.  If $H \in \HH$ and $h
\in \HH_n^{(p)}$, then $T_{n,p}(t(H,-))(h) =0$ if $|E(H)| \neq n$.

\item Let $H \in \HH_n$ and $h \in \HH_{n}^{(p)}$.  Then
%\begin{equation*}
$T_{n,p}(t(H,-))(h) = |\Surj(H,\novert{h})|/p^{|V(H)|}$.
%\end{equation*}
Therefore, $T_{n,p}(t(H,-))(h) > 0$ for $H \in \HH_n$ and $h \in
\HH_{n}^{(p)}$ if and only if there exists a surjective map from $H$ to
$\novert{h}$.

\item The vectors $(T_{n,p}(t(H,-))(h))_{h \in \HH_n^{(p)}}$ for fixed $p
\geq 2n$ and $H$ varying over all of $\HH_n$ are linearly independent.
%In addition, the vectors $(T_{n,p}(t(H,-))(h))_{h \in \G_n}$ for fixed
%$p \geq 2n$ and $H$ varying over all of $\G_n$ are also linearly
%independent.

\item For all $n \in \N$, the vector space $LC_n$ of linearly consistent
vectors $(A_{n,p} (h))_{h \in \HH_n, p \geq 2} \in \prod_{p \geq 2}
X_{n,p}$ has a basis given by
$\{ (T_n(t(H,-))(h))_{h \in \HH_n^{(p)}, p \ge 2} \ : \ H \in \HH_n \}$.
\end{enumerate}
\end{theorem}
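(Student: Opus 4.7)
For part (i), the observation is that Proposition \ref{prop:highergateaux} gives $d^n t(H, f; g_1, \dots, g_n) = 0$ when $n > |E(H)|$, while for $n < |E(H)|$ the formula contains at least one factor $f(x_{e_s}, x_{e_t})$ in the integrand (since $E(H) \setminus A \neq \emptyset$), which vanishes upon evaluation at $f = 0$. Hence $T_{n,p}(t(H,-))(h) = 0$ whenever $|E(H)| \neq n$.

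For part (ii), fix $H \in \HH_n$ and $h \in \HH_n^{(p)}$, and pick $x = (e^p_{(a_l, b_l)})_{l=1}^n \in E_p^n$ with $[\Gamma_{n,p}(x)] = h$. Since $|A| = n = |E(H)|$ forces $A = E(H)$ in Proposition \ref{prop:highergateaux}, only bijective $\sigma: \{1, \dots, n\} \to E(H)$ contribute, and each integral reduces to counting assignments $\phi: V(H) \to \{1, \dots, p\}$ such that $\{\phi(\sigma(l)_s), \phi(\sigma(l)_t)\} = \{a_l, b_l\}$ for every $l$, with each such $\phi$ contributing volume $p^{-|V(H)|}$. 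The key combinatorial step is recognizing that a pair $(\sigma, \phi)$ satisfying the above is precisely the data of a node-and-edge homomorphism $f: H \to \Gamma_{n,p}(x)$ where $E_f = \sigma^{-1}$ is a bijection onto $E(\Gamma_{n,p}(x))$ and $V_f = \phi$; since $H$ has no isolated vertices, $\phi$ necessarily lands inside $V(\novert{\Gamma}_{n,p}(x))$, and the resulting morphism $H \to \novert{\Gamma}_{n,p}(x)$ is automatically surjective (the edge-map is already a bijection, and surjectivity on vertices follows from the absence of isolated vertices in $\novert{\Gamma}_{n,p}(x)$). Conversely every surjection $H \twoheadrightarrow \novert{h}$ arises this way, yielding $T_{n,p}(t(H,-))(h) = |\Surj(H, \novert{h})|/p^{|V(H)|}$.

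For part (iii), I will partially order $\HH_n$ by the number of vertices. If $|V(H)| < |V(\novert{h})|$, no function $V(H) \to V(\novert{h})$ can be surjective, so $T_{n,p}(t(H,-))(h) = 0$. If $|V(H)| = |V(\novert{h})|$, any surjection $H \twoheadrightarrow \novert{h}$ has both $V_f$ and $E_f$ bijective (surjections between equinumerous finite sets), hence is an isomorphism, forcing $\novert{h} \cong H$ and giving the diagonal entry $|\Aut(H)| > 0$. Since $p \geq 2n$ makes $h \mapsto \novert{h}$ a bijection $\HH_n^{(p)} \to \HH_n$ by Proposition \ref{prop:gamma}(iii), the square matrix indexed by $H \in \HH_n$ and $h \in \HH_n^{(p)}$ is block-triangular with positive diagonal, hence the rows are linearly independent.

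Part (iv) is then an assembly: Proposition \ref{prop:Cn_consistent} shows $d^n t(H,-)(0; -) \in \lie{X}_n$, so Theorem \ref{thm:consistency} places each $T_n(t(H,-))$ inside $LC_n$; there are exactly $|\HH_n|$ such vectors, and by part (iii) their projections to $X_{n,p_0}$ for any $p_0 \geq 2n$ are linearly independent, so the vectors themselves are independent in $LC_n$; Theorem \ref{thm:upper}(iii) then forces them to be a basis since $\dim LC_n = |\HH_n|$. The main obstacle is the combinatorial translation in part (ii) between the integral formula coming from Proposition \ref{prop:highergateaux} and the count of surjective node-and-edge homomorphisms, where one must carefully track that the edge-map arising from a bijection $\sigma$ pairs correctly with the vertex-map $\phi$ under the node-and-edge compatibility condition \eqref{Enodeandedge}.
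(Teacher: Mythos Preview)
Your proof is correct and follows essentially the same approach as the paper: parts (i), (ii), and (iv) are argued identically (up to notation), reducing the derivative formula of Proposition \ref{prop:highergateaux} at $f=0$ to a count of surjective node-and-edge homomorphisms and then invoking the dimension count from Theorem \ref{thm:upper}(iii). The only variation is in part (iii): the paper orders $\HH_n$ by the surjection partial order and observes the matrix is triangular with nonzero diagonal, whereas you order by $|V(H)|$ and obtain a block-triangular matrix whose diagonal blocks are themselves diagonal with entries $|\Aut(H)|/p^{|V(H)|}$---this is a mild refinement (since a surjection $H\twoheadrightarrow G$ between non-isomorphic graphs in $\HH_n$ forces $|V(H)|>|V(G)|$), and both yield invertibility the same way.
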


\begin{proof}\hfill
\begin{enumerate}[(i)]
\item For $|E(h)| > |E(H)|$,  $T_{n,p}(t(H,-))(h) =0$ because higher
derivatives are zero by Proposition \ref{prop:highergateaux}.  For
$|E(h)| < |E(H)|$, $T_{n,p}(t(H,-))(h) =0$ by Proposition
\ref{prop:highergateaux}, because we are evaluating a lower-order
derivative at $0$.

\item We claim that $T_{n,p}(t(H,-))(h) = |\Surj(H,h)|/p^{|V(H)|}$ where
$\Surj(H,h)$ is the set of surjective maps from $H$ to $h$.  
To prove this claim, let $H \in \HH_n$ be fixed.  For every fixed $h \in
\HH_n^{(p)}$, fix a bijective map $\phi : V(h) \to \{1, \dots, p\}$. We
now define the tuples ${\bf g}(h)$ by: ${\bf g}(h)_e := e^p_{(\phi(e_s),
\phi(e_t))}$ for $e \in E(h)$. Then for all $H \in \HH_n$,
\begin{align}\label{EMHG3}
T_{n,p}(t(H,-))(h) & = d^n t(H,0;({\bf g}(h)_e)_{e \in E(h)}) \notag\\
&= \sum_{\sigma: E(H) \twoheadrightarrow E(h)} \int_{[0,1]^{V_0}}
\prod_{e \in E(H)} {\bf g}(h)_{\sigma(e)}(x_{e_s}, x_{e_t}) \prod_{i \in
V_0} dx_i,
\end{align}

\noindent where the last equality follows by Proposition
\ref{prop:highergateaux}. (Note that the order of G\^ateaux
differentiation does not matter since mixed partials are equal.)

To prove the claim, consider an arbitrary term in \eqref{EMHG3}.  
Then ${\bf g}(h)_e$ is constant on each ``sub-rectangle" in $[0,1]^{V(H)}$
of size $1/p^{|V(H)|}$. Hence
\begin{align}\label{EMHG4}
&\ \int_{[0,1]^{V(H)}} \prod_{e \in E(H)} {\bf g}(h)_{\sigma(e)}(x_{e_s},
x_{e_t}) \prod_{i \in V(H)} dx_i \notag\\
= &\ \frac{1}{p^{|V(H)|}} \sum_{\tau: V(H) \to \{1, \dots, p\}} \quad
\prod_{e \in E(H)} {\bf g}(h)_{\sigma(e)}\left(\frac{\tau(e_s)-0.5}{p},
\frac{\tau(e_t)- 0.5}{p}\right).
\end{align}

By our choice of ${\bf g}(h)_e$ we have
\begin{equation*}
{\bf g}(h)_{\sigma(e)}\left(\frac{\tau(e_s)- 0.5}{p}, \frac{\tau(e_t)-
0.5}{p}\right) =
\begin{cases}
1 & \text{ if } \{\tau(e_s), \tau(e_t)\} = \{ \phi(\sigma(e)_s),
\phi(\sigma(e)_t)\},\\
0 &\ \text{otherwise}.
\end{cases}
\end{equation*}

\noindent Since $\phi$ is bijective we can define for such $\tau$ vertex
maps $\phi^{-1}\tau: V(H) \to V(h)$. We recognize the expression on the
right hand side of Equation \eqref{EMHG4} to be equal to
$\frac{1}{p^{|V(H)|}}$ times the number of vertex maps $\phi^{-1}\tau:
V(H) \to V(h)$ that form a map of multigraphs $H \to h$, when combined
with the edge map $\sigma: E(H) \twoheadrightarrow E(h)$.  In addition,
Equation \eqref{EMHG3} sums over all surjective maps $\sigma: E(H)
\twoheadrightarrow E(h)$ so $T_{n,p}(t(H,-))(h) =
|\Surj(H,h)|/p^{|V(H)|}$.  Since $H$ has no isolated vertices, there is a
natural bijection between $\Surj(H,h)$ and $\Surj(H,\novert{h})$; thus
the result follows.

\item The vectors $(T_{n,p}(t(H,-))(h))_{h \in \HH_n}$ for fixed $p \geq
2n$ and $H$ varying over all of $\HH_n$ form an upper triangular matrix
with non-zero diagonal if ordered consistently with the existence of a
surjection. Therefore, the matrix is non-singular and the assertion of
linear independence follows.

%The vectors $(T_{n,p}(t(H,-))(h))_{h \in \G_n}$ for fixed $p \geq 2n$ and
%$H$ varying over all of $\G_n$ also form an upper triangular matrix with
%non-zero diagonal if ordered consistently with the existence of a
%surjection.  Thus the vectors are also linearly independent.

\item The vector space of linearly consistent vectors $(A_{n,p}(h))_{h
\in \HH_{n}^{(p)}, p \geq 2}$ has dimension $|\HH_n|$ by Theorem
\ref{thm:upper}.  On the other hand, $T(t(H,-))$ for $H \in \HH_{n}$ are
a linearly independent sets of size $|\HH_n|$ so they form a basis.
\end{enumerate}
\end{proof}

As a consequence of Theorem \ref{thm:hom_prop}, we show the linear
independence of $t(H,-)$ for multigraphs $H \in \HH$.  

\begin{cor}\label{cor:whitney}
The homomorphism densities $t(H,-)$ for $H \in \HH$ are linearly
independent as functions on $\WW_{\bf p}:= \cup_{p = 1}^\infty \WW_p$,
and hence on $\WW$. In particular, the $t(H,-)$ are also linearly
independent for $H \in \G$.
\end{cor}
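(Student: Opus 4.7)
The plan is to deduce linear independence on $\WW_{\bf p}$ directly from parts (i) and (iii) of Theorem \ref{thm:hom_prop} by isolating coefficients one edge-count at a time via taking derivatives at the origin. Suppose we have a finite linear combination $F = \sum_{H \in S} a_H\, t(H,-)$ vanishing identically on $\WW_{\bf p}$, where $S \subset \HH$ is finite. Partition $S = \bigsqcup_n S_n$ with $S_n = S \cap \HH_n$; our goal is to show $a_H = 0$ for all $H \in S$.

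Fix $n$ with $S_n \neq \emptyset$ and choose any $p \geq 2n$. For any tuple ${\bf g} = (g_1,\dots,g_n)$ of elements of $E_p$ (each a $\{0,1\}$-valued indicator, hence lying in $\Adm(0)$), the combination $\lambda_1 g_1 + \cdots + \lambda_n g_n$ lies in $\WW_p \subset \WW_{\bf p}$ for all $\lambda_i \geq 0$ with $\sum_i \lambda_i \leq 1$, since its pointwise value is bounded by $\sum_i \lambda_i \leq 1$. Because $F \equiv 0$ on $\WW_{\bf p}$, the auxiliary function $\mathcal{F}_{0,{\bf g}}(\lambda_1,\dots,\lambda_n) = F(\sum_i \lambda_i g_i)$ vanishes on an open neighborhood of $0$ in the nonnegative orthant, so all its mixed partials at $0$ vanish. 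By the definition of $T_{n,p}$ and the $S_{[0,1]}$-invariance encoded in Definition \ref{remark:part_class}, this yields $T_{n,p}(F)(h) = 0$ for every $h \in \HH_n^{(p)}$.

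Now expand by linearity:
\[
0 = T_{n,p}(F)(h) = \sum_{m} \sum_{H \in S_m} a_H\, T_{n,p}(t(H,-))(h).
\]
By Theorem \ref{thm:hom_prop}(i), the summand for $H \in S_m$ vanishes whenever $m \neq n$, so only the terms with $H \in S_n$ survive, giving $\sum_{H \in S_n} a_H\, T_{n,p}(t(H,-))(h) = 0$ for every $h \in \HH_n^{(p)}$. Since $p \geq 2n$ was arbitrary, Theorem \ref{thm:hom_prop}(iii) asserts that the vectors $(T_{n,p}(t(H,-))(h))_{h \in \HH_n^{(p)}}$ for $H \in \HH_n$ are linearly independent, forcing $a_H = 0$ for all $H \in S_n$. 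Running over all $n$ completes the proof on $\WW_{\bf p}$, and linear independence on $\WW \supset \WW_{\bf p}$ follows immediately by restriction. The final assertion about $\G \subset \HH$ is then automatic.

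The only subtlety, and the step that requires care, is the passage from "$F$ vanishes on the \emph{set} $\WW_{\bf p}$" to "the G\^ateaux derivatives of $F$ at $0$ in directions from $E_p$ vanish": this requires checking that the directions $e^p_{(a,b)}$ are genuinely admissible at $0$ and that small nonnegative linear combinations of them remain in $\WW_{\bf p}$, which is exactly what the indicator structure of $E_p$ provides. Once this is in hand, the result is a direct corollary of the linear independence of $\{T_{n,p}(t(H,-))\}_{H \in \HH_n}$ established in the preceding theorem.
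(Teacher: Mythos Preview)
Your proof is correct and follows essentially the same approach as the paper: differentiate at the origin to separate the linear relation by edge-count (via part (i)), then invoke linear independence of the resulting vectors. The only cosmetic difference is that you appeal to part (iii) of Theorem \ref{thm:hom_prop} directly, whereas the paper cites part (iv); since (iv) is built on (iii), your route is if anything slightly more direct, and your explicit verification of admissibility of the $E_p$ directions at $0$ fills in a detail the paper leaves implicit.
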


\begin{proof}
Assume that there is a finite linear relation $\sum_{H \in \HH} a_H
t(H,f) = 0$ (for all $f$). Now by taking derivatives at zero, it is clear
by Theorem \ref{thm:hom_prop}(i) that $\sum_{H \in \HH_n} a_H T_n(t(H,-))
= 0$ for every $n \in \N$. By Theorem \ref{thm:hom_prop}(iv), we conclude
that $a_H = 0$ for all $H \in \HH_n$ and all $n$.
\end{proof}

\begin{remark}
The proof shows that the linear independence of functions $t(H,-)$ for $H
\in \HH_{\leq n}$ holds even when restricted to their values on
$\WW_{2n}$.  Such linear independence results go back to Whitney
\cite{Whitney} for $H \in \G$.
A powerful result stated in \cite[Theorem 1]{ELS:1979} implies that there
can be no algebraic relations between homomorphism densities of connected
graphs.  We restate it in graphon language below.

\begin{theorem}[Erd\"os-Lov\'asz-Spencer, 1979]\label{thm:ELS}
Let $H_1, \dots H_m$ be all connected graphs (up to isomorphism) with
$|V(H_i)| \le k$.  
Then the image of the function $F: \W \to \R^m$ defined by $f \to
(t(H_1,f),t(H_2,f), \dots, t(H_m,f))$ contains an open ball.  
\end{theorem}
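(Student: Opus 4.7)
The plan is to reduce the statement to a finite-dimensional inverse function theorem argument carried out on step graphons. Fix an integer $p$ (to be chosen large) and identify any $f = \sum_{1 \le a < b \le p} w_{ab}\, e^p_{(a,b)} \in \WW_p$ with its weight vector $w \in [0,1]^{\binom{p}{2}}$. Expanding \eqref{Ehom} makes each $t(H_i,f)$ into a polynomial $P_i(w)$ of degree $|E(H_i)|$ in the variables $w_{ab}$, producing a polynomial map $\Psi_p : [0,1]^{\binom{p}{2}} \to \R^m$ given by $w \mapsto (P_1(w), \dots, P_m(w))$. Since the image of $\Psi_p$ is contained in $F(\W)$, it suffices to find $w^* \in (0,1)^{\binom{p}{2}}$ at which the Jacobian $(\partial P_i / \partial w_{ab})$ has rank $m$: restricting $\Psi_p$ to an $m$-dimensional affine slice through $w^*$ along $m$ columns forming an invertible submatrix then produces, by the classical inverse function theorem, an open ball around $\Psi_p(w^*)$ inside the image.

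The heart of the argument is the following algebraic independence claim: for $p$ sufficiently large, the polynomials $P_1, \dots, P_m$ are algebraically independent in $\R[w_{ab}]$. I would first upgrade the linear independence of Corollary \ref{cor:whitney} to algebraic independence of the \emph{functions} $t(H_1,-), \dots, t(H_m,-)$ on $\W$. If $Q \in \R[z_1, \dots, z_m]$ satisfies $Q(t(H_1,f),\dots, t(H_m,f)) = 0$ for every $f \in \W$, writing $Q = \sum_\alpha c_\alpha z^\alpha$ and applying the multiplicativity relation \eqref{Eproduct} rewrites each monomial as $c_\alpha\, t\bigl(\coprod_i H_i^{\coprod \alpha_i}, f\bigr)$. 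Because the $H_i$ are connected and pairwise non-isomorphic, the disjoint unions $\coprod_i H_i^{\coprod \alpha_i}$ attached to distinct multi-indices $\alpha$ are pairwise non-isomorphic multigraphs (uniqueness of decomposition into connected components), so Corollary \ref{cor:whitney} forces every $c_\alpha = 0$. I would then transfer this to polynomials: if $Q(P_1, \dots, P_m)$ were identically zero on $[0,1]^{\binom{p}{2}}$ for every $p$, then by continuity of the $t(H_i, -)$ and density of $\WW_{\bf p}$ in $\W$, the function $Q(t(H_1, -), \dots, t(H_m, -))$ would vanish on all of $\W$, contradicting the algebraic independence just established. Hence $P_1, \dots, P_m$ are algebraically independent for $p$ large enough.

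With algebraic independence in hand, the Jacobian $(\partial P_i / \partial w_{ab})$ necessarily attains rank $m$ on a nonempty Zariski-open subset of $\R^{\binom{p}{2}}$; otherwise the image of $\Psi_p$ would lie in a proper algebraic subvariety of $\R^m$, forcing a nontrivial polynomial relation among the $P_i$. Intersecting this open set with the interior cube $(0,1)^{\binom{p}{2}}$ supplies the desired $w^*$, completing the proof. The main obstacle is the algebraic independence step, and the connectedness hypothesis is essential precisely there: for disconnected $H_i$ the relation $t(H_i \coprod H_j, -) = t(H_i, -)\, t(H_j, -)$ would confine the image of $F$ to a proper algebraic hypersurface and the conclusion would fail.
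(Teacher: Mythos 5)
The paper does not prove Theorem \ref{thm:ELS}; it simply cites \cite[Theorem 1]{ELS:1979} and restates that result in graphon language inside a remark, so there is no internal proof to compare against. I will therefore evaluate your argument on its own merits.

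Your plan --- restrict to $\WW_p$, view each $t(H_i,-)$ as a polynomial $P_i^{(p)}$ in the edge weights, locate a point of full Jacobian rank, and apply the inverse function theorem --- is a sensible route, and your first step (algebraic independence of $t(H_1,-),\dots,t(H_m,-)$ as functions on $\W$, using Corollary \ref{cor:whitney} together with the unique decomposition of a simple graph into connected components) is sound. However, the transfer to a single fixed $p$ has a genuine quantifier gap. What your density argument shows is: for every nontrivial $Q$ there exists a $p$, \emph{depending on $Q$}, with $Q(P_1^{(p)},\dots,P_m^{(p)})\not\equiv 0$. What you need is: there exists one $p$ that works for \emph{all} $Q$ simultaneously. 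Nothing in your argument prevents the required $p$ from growing without bound as $\deg Q$ grows, and the obvious attempt to patch this on the ideal side fails: setting $I_p:=\{Q : Q(P_1^{(p)},\dots,P_m^{(p)})\equiv 0\}$ gives a \emph{decreasing} chain of ideals along divisibility of $p$, and decreasing chains in $\R[z_1,\dots,z_m]$ need not stabilize. To close the gap one should instead work with the \emph{increasing} chain of Zariski closures $V_p:=\overline{\Psi_p\bigl(\C^{\binom{p}{2}}\bigr)}$ along a divisibility-cofinal sequence (e.g.\ $p=j!$); that chain stabilizes at some $V_{p_0}$ by Noetherianity of $\C^m$, and since $I(V_{p_0})=\bigcap_p I_p$ while your first step together with the $L^1$-density of $\WW_{\bf p}$ shows $\bigcap_p I_p=(0)$, the polynomials $P_1^{(p_0)},\dots,P_m^{(p_0)}$ are genuinely algebraically independent. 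With that patch the rest (generic full rank of the Jacobian, inverse function theorem on an affine slice) goes through.

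A smaller point worth flagging: as the theorem is literally stated, the list of $H_i$ includes $K_1$ whenever $k\geq 1$, and since $t(K_1,-)\equiv 1$ the image of $F$ then lies in a hyperplane and cannot contain a ball; moreover your appeal to Corollary \ref{cor:whitney} also breaks, because the disjoint unions $K_1^{\coprod a}$ have isolated vertices and all yield the same density. You need (and are implicitly assuming) the standing convention that each $H_i$ has at least one edge, as in the original Erd\H{o}s--Lov\'asz--Spencer formulation; this should be said explicitly.
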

\end{remark}

We now bring together the results of the previous sections and this
section to prove the main theorem.

\begin{proof}[{\bf Proof of Theorem \ref{thm:main}}] 
We associated to any smooth continuous class solution $F$, the linearly
consistent vector $T(F) \in \prod_{n \ge 0} \prod_{p \ge 2} X_{n,p}$. By
parts (i),(iv) of Theorem \ref{thm:hom_prop}, the space of linearly
consistent vectors are spanned by those arising from homomorphism
densities.  It follows that there exist constants $a_H$ so that $T(F)$
can be written as
\begin{equation}\label{Etaylorexpansion}
T(F) = \sum_{H \in \HH_{\le N}} a_H T(t(H,-)).
\end{equation}

Note that for any fixed direction $f \in \WW_{\bf p}$, the
one-dimensional differential equation is solved by
\begin{equation*}
F(f) = F(0) + dF(0;f) + \frac{d^2F(0;f,f)}{2!} + \dots +
\frac{d^N F(0;f,\dots,f)}{N!}.
\end{equation*}

\noindent Since the derivatives are all multilinear, the value of $F$ on
$\WW_{\bf p}$ is therefore determined by $T(F)$ by this formula.  The
same applies to $t(H,-)$ for all $H \in \HH_{\leq N}$, so we see that
$F(f) = \sum_{H \in \HH_{\le N}} a_H t(H,f)$ on $f \in \WW_{\bf p}$.  
 
Since both sides are continuous in the $L^1$ topology by Proposition
\ref{prop:cont_func_char}, and since $\WW_{\bf p}$ is dense in $\W$ in the
$L^1$ topology, the two sides are equal on all $f \in \W$. In addition,
by Corollary \ref{cor:whitney} the coefficients $a_H$ are unique. This 
shows the first part of the result. If we assume further that $F$ is
continuous with respect to the cut-norm, then Proposition
\ref{prop:cont_func_char} shows that
%\begin{equation*}
$F(f) = \sum_{H \in \HH_{\le N}} a_H t(H^{simp},f)$
%\end{equation*}
where $H^{simp}$ is the simple graph obtained from $H$ by retaining only
a single edge between two vertices of $H$ if there are one or more edges
connecting the same two vertices.

\noindent On the other hand, $F(f) = \sum_{H \in \HH_{\leq N}} a_H
t(H,f)$ for all $f \in \W$ from above. Using Corollary \ref{cor:whitney},
this is possible if and only if $a_H = 0$ for all $H \in \HH_{\leq N}
\setminus \G_{\leq N}$, and the second part of the result follows.
\end{proof}

\begin{remark}
Note that it is enough in the statement of the theorem to only assume
that
\[ d^{N+1}(F)(cg; g, g, \dots, g) \equiv 0, \qquad \forall g \in \WW_{\bf
p}, \ c \in (0,1), \]

\noindent since the above proof only uses this assumption.

Moreover, the first part of Theorem \ref{thm:main} holds for any topology
on $\WW$ such that $\WW_{\bf p}$ is dense in $\WW$ and such that $t(H,-)$
for $H \in \HH$ is continuous with respect to the topology.
\end{remark}
%}}}

%{{{1 Section 4.3 - Application 2: Characterizing homomorphism densities
\subsection{Characterizing homomorphism densities}

As we have seen, homomorphism densities $t(H,-)$ are fundamental
continuous class functions on $\W$.  It is natural to characterize
homomorphism densities amongst all continuous class functions.  Indeed,
similar characterizations exist (\cite[Section 5.6]{Lovasz:2013}) for
functions $F : \G \to \R$  of the form $Hom(-,H)$ and $Hom(H,-)$. 

Our characterization is based on the work of the previous sections and
the notion of the tensor product of two graphons.  Recall from
\cite[Section 7.4]{Lovasz:2013} that given graphons $f,g \in \WW$, their
\textit{tensor product} $f \otimes g : [0,1]^4 \to [0,1]$ is defined to
be the map:
\[ (f \otimes g) (x_1,x_2,y_1,y_2) := f(x_1,y_1) g(x_2,y_2). \]

\noindent Given an arbitrary measure preserving map $\phi: [0,1] \to
[0,1]^2$, the tensor product can be associated with
the following graphon:
$(f \otimes g)^\phi (x,y) := (f \otimes g) (\phi(x),\phi(y))$.
Different choices of $\phi$ yield weakly equivalent graphons. With a
small abuse of notation, we shall write $f \otimes g$ to denote the
graphon $(f \otimes g)^\phi$, where $\phi : [0,1] \to [0,1]^2$ is fixed
for the rest of this section.

The tensor product has the property (\cite[Section 7.4]{Lovasz:2013})
that for all multigraphs $H$ and graphons $f,g$,
\begin{equation}\label{eq:tensor_mult}
t(H,f \otimes g) = t(H,f) t(H,g).
\end{equation}

\noindent In particular, for every $m \in \N$ and multigraph $H \in \HH$,
we have: $t(H,f^{\otimes m}) = t(H,f)^m$.

We now characterize homomorphism densities in terms of tensor products
and G\^ateaux derivatives.

\begin{theorem}\label{thm:characterization}
A class function $F: \W \to \mathbb{R}$ is a homomorphism density
$t(H,-)$ for $H \in \HH$ (resp.~$H \in \G$), if and only if it satisfies
the following properties:
\begin{enumerate}[(i)]
\item $F$ is continuous in the $L^1$-topology (resp.~cut topology) on
$\W$;
\item there exists $n > 0$ so that $d^{n} F(f; g_1, \dots, g_n) = 0$ for
all $f \in \W$ and $g_1, \dots, g_n \in \Adm(f)$; and
\item $F(f)^m = F(f^{\otimes m})$ for some even integer $m \in \N$, for
all $f \in \W$.
\end{enumerate}
\end{theorem}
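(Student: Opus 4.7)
The forward direction ($F = t(H,-) \Rightarrow$ (i)--(iii)) is quick. Continuity in the $L^1$-topology (and in the cut-norm when $H \in \G$, as then $H^{simp} = H$) follows from Proposition~\ref{prop:cont_func_char}; vanishing of $d^{n} t(H,-)$ for any $n > |E(H)|$ follows from Proposition~\ref{prop:highergateaux}; and the tensor identity $t(H,f)^m = t(H,f^{\otimes m})$ for every $m$ is exactly Equation~\eqref{eq:tensor_mult}. The bulk of the work lies in the converse.

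For the converse, I would first apply Theorem~\ref{thm:main} to conditions (i) and (ii), obtaining a unique representation $F = \sum_{H \in \HH_{\leq n-1}} a_H t(H,-)$ (with the sum restricted to $\G_{\leq n-1}$ when $F$ is cut-continuous). Inserting this into condition (iii), expanding $F(f)^m$ multinomially and using $t(H_1 \coprod H_2,-) = t(H_1,-)\,t(H_2,-)$ to re-express cross-terms as homomorphism densities of disjoint unions, and using \eqref{eq:tensor_mult} to rewrite $F(f^{\otimes m}) = \sum_H a_H t(H^{\coprod m}, f)$, the linear independence of $\{t(K,-)\}_{K \in \HH}$ from Corollary~\ref{cor:whitney} yields, for each $K \in \HH$, the combinatorial system
\begin{equation*}
\sum_{\substack{(H_1, \dots, H_m) \in \HH^m \\ H_1 \coprod \cdots \coprod H_m \cong K}} \prod_{i=1}^m a_{H_i} \; = \; \begin{cases} a_H & \text{if } K \cong H^{\coprod m}, \\ 0 & \text{otherwise.} \end{cases}
\end{equation*}

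The crucial algebraic step is to encode this system as a polynomial identity. Since every $H \in \HH$ decomposes uniquely into its connected components, $(\HH, \coprod, \emptyset)$ is freely generated as a commutative monoid by the set $\HH_{\mathrm{conn}}$ of connected multigraphs in $\HH$. Introducing an indeterminate $y_C$ for each $C \in \HH_{\mathrm{conn}}$ and defining
\begin{equation*}
P(y) := \sum_{H \in \HH} a_H \prod_{C \in \HH_{\mathrm{conn}}} y_C^{n_C(H)},
\end{equation*}
where $n_C(H)$ is the number of components of $H$ isomorphic to $C$, the displayed system becomes equivalent to the single polynomial identity $P(y)^m = P(y^m)$ in $\R[(y_C)]$, with $y^m$ denoting raising each variable to the $m$th power.

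It then remains to show that any $P \in \R[y_1,\dots,y_s]$ satisfying $P(y)^m = P(y^m)$ for some even $m \geq 2$ must be $0$, the constant $1$, or a monic monomial $y^\alpha$. My approach is to specialize $y_i = t^{b_i}$ for arbitrary non-negative integer tuples $b$; the one-variable polynomial $Q_b(t) := P(t^{b_1}, \dots, t^{b_s})$ then satisfies $Q_b(t)^m = Q_b(t^m)$, and a short root/coefficient analysis (leading and constant coefficients satisfy $c^m = c$, so lie in $\{0,1\}$; any other non-zero coefficient of smallest positive degree would produce a term in $Q_b^m$ of degree strictly less than $m$ times its position, absent from $Q_b(t^m)$) forces $Q_b \in \{0,1,t^k\}$. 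Choosing $b$ so that the linear form $\alpha \mapsto b \cdot \alpha$ separates any two putative distinct exponents in $\mathrm{supp}(P)$ (for instance $b_i = M^i$ with $M$ larger than any coordinate of any $\alpha$ in the support) collapses $|\mathrm{supp}(P)| \leq 1$, and then $c^m = c$ gives $P \in \{0, 1, y^\alpha\}$. Translating back: $P = y^\alpha$ gives $F = t(H_0, -)$ for $H_0 = \coprod_C C^{\coprod \alpha_C} \in \HH$ (and $H_0 \in \G$ in the cut-continuous case by the last clause of Theorem~\ref{thm:main}), $P = 1$ gives $F = t(\emptyset,-)$, and $P = 0$ is the degenerate corner $F \equiv 0$. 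The main technical obstacle is this final one-variable plus specialization analysis; everything preceding it is bookkeeping from results already in the paper.
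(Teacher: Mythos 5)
Your proposal is correct, and it reaches the same conclusion by a genuinely different route. Both arguments start by applying Theorem~\ref{thm:main} to conditions (i)--(ii) to write $F = \sum_H a_H t(H,-)$ as a finite sum, and both exploit that $\HH$ is a free commutative monoid on connected multigraphs under disjoint union. The paper then constructs an explicit well-ordering on $\HH$ compatible with $\coprod$ (lexicographic on exponent vectors over connected components), orders the $H_i$ with nonzero coefficients, and observes that the cross-term $t(H_1^{\coprod(m-1)}\coprod H_2,-)$ appears with coefficient $m a_1^{m-1}a_2 \neq 0$ in $F^m$ but cannot appear in $F(\cdot^{\otimes m}) = \sum_H a_H t(H^{\coprod m},-)$, forcing at most one nonzero $a_H$; then $a_H^m = a_H$ with $m$ even gives $a_H \in \{0,1\}$. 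You instead package the coefficient constraint as the polynomial identity $P(y)^m = P(y^m)$ in $\R[(y_C)_{C\ \mathrm{conn}}]$, specialize $y_i = t^{b_i}$ with a separating tuple $b$ (e.g.\ $b_i = M^i$), and classify one-variable solutions of $Q(t)^m = Q(t^m)$ by a lowest/leading-term argument. The underlying mechanism is the same — a leading-term analysis on a free commutative monoid — but your version cleanly separates the purely algebraic lemma (``solutions of $P^m = P(\cdot^m)$ are $0$, $1$, or monic monomials'') from the graph combinatorics, and the single-variable specialization makes that lemma elementary and self-contained, at the cost of a bit more setup than the paper's direct cross-term extraction. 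Both proofs leave the same corner untreated: $F \equiv 0$ satisfies (i)--(iii) but is not a homomorphism density, so strictly the ``if'' direction should exclude it; you flag this as the $P = 0$ degenerate case, which is more than the paper does.
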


\begin{proof}
Suppose first that $F$ is of the form $F(f) = \sum_{i=1}^n a_i t(H_i,f)$
with $a_i \neq 0$ for all $i$ and $H_i \in \HH$ pairwise distinct.
To prove the result in this case, we define a well-ordering $\le$ on
$\HH$ as follows. First define a well-ordering on the set of connected
multigraphs $H \in \HH$ by setting $H < G$ if $|E(H)| < |E(G)|$, and
picking an arbitrary total ordering on the finite set of connected
multigraphs with a fixed number of edges.  Now any finite multigraph has
a unique decomposition into finitely many connected multigraphs of the
form $H = \coprod_{i=1}^l H_i^{\coprod k_i}$.  The set $\HH$ can
therefore be put into bijection with sequences of non-negative integers
$e_H$, where $H$ ranges over connected finite multigraphs and only
finitely many $e_H$ are non-zero.
Therefore $\HH$ can be ordered lexicographically since we have already
put a total order on the connected multigraphs in $\HH$.  This ordering
on $\HH$ is now a well-ordering, whose unique minimum is given by the
graph with a single vertex and no edges. It satisfies the property that
if $H_1 \le H_2$ and $G_1 \le G_2$, then $H_1 \coprod H_2 \le G_1 \coprod
G_2$, with equality if and only if $H_1 = H_2$ and $G_1 = G_2$.  

Now by assumption, 
\[ \left[\sum_{i=1}^n a_i t(H_i,f)\right]^m = \sum_{i=1}^n a_i
(t(H_i,f))^m \]
for some even $m >1$.  By the linear independence of multigraph
homomorphism densities (Corollary \ref{cor:whitney}) we must have
equality termwise in $\HH$. Assume without loss of generality that $H_1 <
\cdots < H_n$ in the total ordering $<$, and suppose for contradiction
that $n>1$. Then the cross term $m a_{1}^{m-1} a_{2} t(H_1^{m-1} \coprod
H_2, f)$ on the left hand side is non-zero but does not appear on the
right hand side.  We conclude that there is at most one non-zero $a_i$.
In that case, $a_i = a_i^m$ so $a_i \in \{0,1\}$.

Finally, suppose $F$ is any function satisfying the hypotheses. By
conditions (i) and (ii) and Theorem \ref{thm:main}, $F$ is of the form
$F(f) = \sum_{i=1}^k a_i t(H_i,f)$ with $H_i \in \HH$ or $\G$ for all
$i$, depending on the topology used in (i). Therefore the theorem follows
from the above analysis.
\end{proof}

\begin{remark}
We now explain how condition (iii) above can be replaced by a purely
combinatorial condition. Note that by the assumed continuity of $F$ in
the cut-norm, $F(f)^m$ is continuous in $f$.  Also note that if $f_k \to
f$ then $t(H,f_k^{\otimes m} ) \to t(H,f^{\otimes m})$ for every simple
graph $H$ by Equation \eqref{eq:tensor_mult}. Thus $f_k^{\otimes m} \to
f^{\otimes m}$ -- i.e., $f \mapsto f^{\otimes m}$ is continuous -- and
hence, $F(f^{\otimes m})$ is continuous with respect to $f$ as well.  
Therefore, it suffices to assume condition (iii) for simple graphs $f^H$
in place of general $f$. Moreover, the tensor product of two graphs
$f^{H_1} \otimes f^{H_2}$ is weakly equivalent to a graph(on)
corresponding to the Kronecker product of the adjacency matrices of $H_1$
and $H_2$. Thus, the third condition can be replaced by a purely
combinatorial condition involving only finite simple graphs -- i.e.,
Kronecker powers of their adjacency matrices.
\end{remark}

\comment{
\begin{remark}
A natural question that now arises is whether all class functions $F :
\WW \to \R$ that are multiplicative (with respect to $\otimes$) are of
the form $t(H,-)$ for some $H \in \HH$. The answer turns out to be
negative. For instance, consider a finite set of graphs $S := \{ H_1,
..., H_k \} \subset \G$, and define
$M_S(f) := \max_{1 \leq i \leq k} t(H_i, f)$.
This is clearly multiplicative, but we claim that it is not
$C^1$ if there are two graphs in $S$ with unequal number of edges. For in
this case, consider the behavior of $M_S$ along the ``constant" line $f
\equiv c$. Then $t(H_i, c) = c^{|E(H_i)|}$ for all $i$. So below $c=1$,
the function is $M_S(c) = c^{\min |E(H_i)|}$ while above $1$, it equals
$c^{\max |E(H_i)|}$. Hence $d M_S(1;1)$ does not exist. This
counterexample shows us why additional hypotheses are required in order
to characterize homomorphism densities.
\end{remark}
}
%}}}

\section{Power series and Taylor series}\label{Sseries}

The previous sections demonstrate that $t(H,-)$ with $|E(H)| = n$ can be
seen as the analogue of monomials of degree $n$ in the graphon space. By
analogy to single variable Taylor series, we study expansion of smooth
class functions on $\W$ in terms of infinite series of homomorphism
densities.
We give sufficient conditions for the existence of such series in Section
\ref{Staylor}, and prove their uniqueness in Section \ref{Suniqueness}.
The proofs of these results rely heavily on the work of Section \ref{S4}.
In particular, in Section \ref{Staylor} we use Theorem \ref{thm:main} to
show that the Taylor expansion of a smooth class function can be written
in terms of homomorphism densities.  In Section \ref{Suniqueness} we
generalize the linear independence result of Section \ref{S4} to prove
uniqueness and explain the general philosophy behind the proofs of both
the linear independence results.

In order to do this we first investigate general facts about
differentiation and convergence of series of homomorphism densities in
Section \ref{Subseries}. The general theory also includes an algebra
structure on such series, that is obtained from the algebra structure on
formal linear combinations of graphs $\Q_0$.  The algebra structure can
be extend to formal linear combinations of $k$-labelled multigraphs
$\Q_k$, so we develop the general properties to include their homomorphism
densities.  

In Section \ref{SqN} we explain how to package all of the $\Q_k$ into a
single algebra $\Q_\N$ and then define {\it weighted homomorphism
densities}. These simultaneously generalize multigraph homomorphism
densities, partially labelled graph homomorphism densities, and their
derivatives.  We develop the theory of series in this generality in order
that the space of series is closed under differentiation and has an
algebra structure.

Along the way, we address in Section \ref{Subseries} one of Lov\'asz's
questions \cite[Problem 16]{Lovasz:Open} about whether it is possible to
expand right homomorphism densities in terms of left homomorphism
densities.  Finally, in Section \ref{Sinfq} we apply this theory to give
a proposal for an analytic theory of infinite quantum algebras, thereby
addressing another of Lov\'asz's questions \cite[Problem 7]{Lovasz:Open}.
This last application gives a second motivation for developing the
properties of series in the generality we do.

%{{{1 Section 5.1 - The algebra of partially labelled multigraphs
\subsection{The algebra of partially labelled multigraphs}\label{SqN}

In this section we recall how to equip the space of formal linear
combinations of partially labelled multigraphs with an algebra structure.
In addition, we define generalizations of homomorphism densities indexed
by such graphs with weights so that the functions are closed under
differentiation.  These functions will form the individual terms of the
infinite series we investigate in this paper.

\begin{definition}\label{Dgraph-map}
Given an integer $k \geq 0$, a \textit{$k$-labelled multigraph} is a
multigraph $H$ with an injective label map $l_H : \{1,\dots, k\} \to
V(H)$.   (If $k=0$ then $H$ is unlabelled.)
If $G,H$ are $k$-labelled multigraphs for $k>0$, then a map $f$ of such
multigraphs is defined by the data of a map
of vertices $V_f : V(H) \to V(G)$ and a map of edges $E_f : E(H) \to
E(G)$.  The maps $E_f$ and $V_f$ must be compatible in the sense that
\begin{equation*}
\{E_f(e)_s,E_f(e)_t\} = \{V_f(e_s),V_f(e_t)\} \ \forall e \in E(H),
\end{equation*}
and $V_f(l_H(a)) = l_G(a)$ for $a \in \{1, \dots, k\}$.

For all integers $k,n \geq 0$, define $\HH_{n,k}$ to be the finite set of
isomorphism classes of $k$-labelled multigraphs with $n$ edges and no
unlabelled isolated nodes. Finally, following \cite[Section
6.1]{Lovasz:2013}, define
\[ \Q_k := {\rm span}_\R \bigcup_{n \geq 0} \HH_{n,k} \]

\noindent to be the vector space with basis given by the disjoint union
of the sets $\{ \HH_{n,k} : n \geq 0 \}$.
\end{definition}

Note that $\Q_k$ can be given the structure of a commutative algebra by
defining the product of two labelled multigraphs $F_1$ and $F_2$ to be
the multigraph obtained by taking their disjoint union and identifying
equivalently labelled nodes.
In fact, we note that for all $n$, the set of graphs $\HH_{n,k}$ embeds
into $\HH_{n,k+1}$ by attaching an additional isolated node labelled
$k+1$ to any graph $H \in \HH_{n,k}$. This induces the obvious linear
injection $: \Q_k \hookrightarrow \Q_{k+1}$ for all $k$. Thus, define
(following \cite[Section 6.1]{Lovasz:2013}) their directed limit to be
the {\it algebra of partially labelled multigraphs}:
\[ \Q_\N := \bigcup_{k \geq 0} \Q_k = {\rm span}_\R \HH', \qquad \HH' :=
\bigcup_{k \geq 0} \coprod_{n \geq 0} \HH_{n,k}. \]

The space $\Q_0 = {\rm span}_\R \HH$ is called the space of {\it quantum
graphs} \cite[Section 6.1]{Lovasz:2013}. There is an algebra map $\alpha$
from $\Q_0$ to the space of class functions on $\W$, defined by
$\alpha(H) := t(H,-)$ and extending linearly.  This is a map of algebras
because the product on $\Q_0$ is just disjoint union and
$t(H_1,f)t(H_2,f) = t(H_1 \coprod H_2, f)$.

The map $\alpha$ has been generalized to define homomorphism densities
from partially labelled multigraphs $H \in \bigcup_{n \geq 0} \HH_{n,k}$
for any $k \geq 0$ -- see \cite[Section 7.2]{Lovasz:2013}. In the
language of this paper, fix $H \in \HH_{n,k}$ and ${\bf x}_1, \dots, {\bf
x}_k \in [0,1]$, and let $V_0 := V \setminus l_H( \{ 1, \dots, k \})$
denote the set of unlabelled vertices. We now define
\begin{equation}\label{Eproduct3}
f \mapsto t_{\bf x}(H,f) = t_{({\bf x}_1, \dots, {\bf x}_k)}(H,f) :=
\int_{[0,1]^{V_0}} \prod_{e \in E(H)} f(x_{e_s}, x_{e_t})\ \prod_{i \in
V_0} d x_i
\end{equation}
where $x_v = {\bf x}_{l_H^{-1}(v)}$ for labelled vertices $v \in V(H)
\setminus V_0$.  It is not difficult to see that this notion only depends
on the isomorphism class of $H$.

As in the $k=0$ case, we have
\begin{equation}\label{Eproduct2}
t_{\bf x}(H_1,f)t_{\bf x}(H_2,f) = t_{\bf x}(H_1 H_2, f)
\end{equation}

\noindent for all $k$-labelled multigraphs $H_1, H_2$. Hence $\alpha_{\bf
x} : \Q_k \to Func(\WW,\R)$ is an algebra map for any choice of ${\bf x}
\in [0,1]^k$.
More precisely, fix ${\bf x}_n \in [0,1]$ for all $n \in \N$, and define
${\bf x} := ({\bf x}_n) \in [0,1]^\N$. Then one can define the map
$\alpha_{\bf x} : \Q_\N \to Func(\WW,\R)$, given by
\begin{equation}
\alpha_{\bf x}(H) := t_{\bf x}(H,-), \qquad \forall {\bf x} \in
[0,1]^\N,\ H \in \bigcup_{k \geq 0} \bigcup_{n \geq 0} \HH_{n,k},
\end{equation}

\noindent and extending by linearity. Here if $H \in \HH_{n,k}$ then we
define $t_{\bf x}(H,-) := t_{({\bf x}_1, \dots, {\bf x}_k)}(H,-)$. The
following result is then immediate.

\begin{lemma}\label{Lalghom}
For all ${\bf x} \in [0,1]^\N$, %the map
$\alpha_{\bf x} : \Q_\N \to Func(\WW,\R)$ is an algebra homomorphism.
\end{lemma}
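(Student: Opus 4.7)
The plan is straightforward: since $\alpha_{\bf x}$ has been defined by first specifying its values on the basis $\HH' = \bigcup_{k \geq 0} \bigcup_{n \geq 0} \HH_{n,k}$ of $\Q_\N$ and then extending linearly, linearity is automatic by construction. All of the content of the lemma therefore lies in verifying that $\alpha_{\bf x}$ is multiplicative, and before even doing this I would first need to confirm that $\alpha_{\bf x}$ is well-defined on the directed limit $\Q_\N = \bigcup_k \Q_k$.

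For well-definedness, I would check compatibility with the embeddings $\Q_k \hookrightarrow \Q_{k+1}$ that attach an isolated vertex labelled $k+1$. Concretely, if $H \in \HH_{n,k}$ and $H^+ \in \HH_{n,k+1}$ is obtained from $H$ by adjoining an isolated node labelled $k+1$, then in the defining formula \eqref{Eproduct3} the new vertex lies in $V(H^+) \setminus V_0(H^+)$, so its coordinate is fixed at ${\bf x}_{k+1}$ rather than integrated, and it contributes no edge factor to the product $\prod_{e \in E(H)} f(x_{e_s}, x_{e_t})$. Hence $t_{\bf x}(H^+, f) = t_{\bf x}(H, f)$ as functions on $\WW$, and $\alpha_{\bf x}$ descends to a well-defined linear map on $\Q_\N$.

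For multiplicativity, it suffices to check $\alpha_{\bf x}(H_1 H_2) = \alpha_{\bf x}(H_1)\, \alpha_{\bf x}(H_2)$ on basis elements and extend bilinearly. Given $H_i \in \HH_{n_i, k_i}$ for $i = 1,2$, I would lift both into $\Q_k$ for $k = \max(k_1, k_2)$ via the above embeddings (which we have just shown $\alpha_{\bf x}$ respects) and then form the product $H_1 H_2$ as disjoint union with identification of equally labelled nodes. The desired identity is now precisely Equation \eqref{Eproduct2}, which itself follows directly from \eqref{Eproduct3}: the integrand of $t_{\bf x}(H_1 H_2, f)$ factors as a product of the integrands of $t_{\bf x}(H_1, f)$ and $t_{\bf x}(H_2, f)$ because the only shared coordinates are the labelled ones (which are fixed to ${\bf x}_i$, not integrated), so Fubini's theorem separates the two integrals over the respective unlabelled vertex sets.

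Honestly, there is no serious obstacle here: once the bookkeeping around the directed limit and the labelling convention is pinned down, the lemma reduces cleanly to the already-noted identity \eqref{Eproduct2}. The only mildly subtle point worth stating carefully is the interaction between the disjoint-union-with-identification product in $\Q_\N$ and Fubini on the integrals defining $t_{\bf x}$; this is where the fact that we identify only \emph{labelled} nodes (whose coordinates are frozen at ${\bf x}_i$) is essential, since it prevents the two integrals from sharing any integration variable.
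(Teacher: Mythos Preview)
Your proposal is correct and follows exactly the approach the paper intends: the paper states that the lemma is ``immediate'' from the preceding discussion, in particular from Equation \eqref{Eproduct2}, and your argument simply unpacks this by verifying well-definedness on the directed limit and then invoking \eqref{Eproduct2} together with Fubini for multiplicativity on basis elements.
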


\noindent {\bf Weighted homomorphism densities.}

The space of functions $\scrs_t$ can be extended in several different
directions. It is the aim of this section to extend it to contain
multigraph homomorphism densities, the image of $\alpha_{\bf x}$,
infinite convergent series, and weighted series that arise naturally
after taking derivatives. To that end, we make the following definition.

\begin{definition}
Given a multigraph $H \in \HH_{n,k}$ with unlabelled nodes given by $V_0
:= V(H) \setminus l_H(\{ 1, \dots, k \})$, together with numbers ${\bf
x}_1, \dots, {\bf x}_k \in [0,1]$, and a bounded kernel $g_H :
[0,1]^{V(H)} \to \R$, define the corresponding {\it weighted partially
labelled multigraph homomorphism density} to be:
\begin{equation}
t_{({\bf x}_1, \dots, {\bf x}_k)}(H,f; g_H) := \int_{[0,1]^{V_0}}
g_H((x_i)_{i \in V(H)}) \prod_{e \in E(H)} f(x_{e_s}, x_{e_t})\ \prod_{i
\in V_0} d x_i.
\end{equation}
where $x_v = {\bf x}_{l_H^{-1}(v)}$ for labelled vertices $v \in V(H)
\setminus V_0$.
\end{definition}

\noindent In particular, $t_{\bf x}(H,f,1) = t_{\bf x}(H,f)$ in the
notation of \eqref{Eproduct3}. We use this without further reference in
the remainder of the paper.

We now generalize Equations \eqref{Eproduct} and \eqref{Eproduct2} to
weighted, partially labelled multigraphs. The following result is not
hard to show.

\begin{prop}\label{prop:dens_multiplicative}
We have for $H \in \HH_{n,k}$ and $H' \in \HH_{m,k}$:
\begin{equation*}
t_{\bf x}(H,f,g_H)t_{\bf x}(H',f,g'_{H'}) = t_{\bf x}(HH',f,g_H g'_{H'}). 
\end{equation*}
\end{prop}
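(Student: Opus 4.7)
The plan is to directly unfold both sides of the asserted identity using the definition of $t_{\mathbf{x}}(\cdot,\cdot;\cdot)$ and then invoke Fubini's theorem. First I would fix $H \in \HH_{n,k}$ and $H' \in \HH_{m,k}$, write $V_0 := V(H) \setminus l_H(\{1,\ldots,k\})$ and $V_0' := V(H') \setminus l_{H'}(\{1,\ldots,k\})$ for their respective unlabelled vertex sets, and recall from the construction of the algebra product in $\Q_k$ that $HH'$ is obtained from the disjoint union $H \sqcup H'$ by identifying $l_H(i)$ with $l_{H'}(i)$ for $1 \le i \le k$. Consequently, the unlabelled vertex set of $HH'$ is the disjoint union $V_0 \sqcup V_0'$, the edge set is $E(H) \sqcup E(H')$, and the labelled vertex of $HH'$ carrying label $i$ is the image of both $l_H(i)$ and $l_{H'}(i)$.

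Next I would spell out the right-hand side. Under the identification above, the kernel $g_H \cdot g'_{H'}$ is interpreted as a function on $[0,1]^{V(HH')}$ by letting $g_H$ depend only on the coordinates indexed by $V(H) = V_0 \sqcup l_H(\{1,\ldots,k\})$ and $g'_{H'}$ only on those indexed by $V(H') = V_0' \sqcup l_{H'}(\{1,\ldots,k\})$, with the labelled coordinates common to both factors. Plugging into the definition gives
\begin{equation*}
t_{\mathbf{x}}(HH',f;g_H g'_{H'}) = \int_{[0,1]^{V_0 \sqcup V_0'}} g_H\, g'_{H'} \prod_{e \in E(H)} f(x_{e_s},x_{e_t}) \prod_{e' \in E(H')} f(x_{e'_s},x_{e'_t}) \prod_{i \in V_0 \sqcup V_0'} dx_i,
\end{equation*}
where the labelled coordinates $x_{l_H(j)} = x_{l_{H'}(j)} = \mathbf{x}_j$ are held fixed.

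The key step is then a single application of Fubini's theorem, which is justified because the integrand is bounded (the graphon $f$ and kernels $g_H,g'_{H'}$ are all bounded, and the domain is a finite-dimensional unit cube). The integrand factors as a product of a function depending only on $(x_v)_{v \in V_0}$ (together with the fixed $\mathbf{x}$) and one depending only on $(x_v)_{v \in V_0'}$ (together with the fixed $\mathbf{x}$), so the integral splits as the product of the two corresponding integrals, which are precisely $t_{\mathbf{x}}(H,f;g_H)$ and $t_{\mathbf{x}}(H',f;g'_{H'})$.

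There is no substantive mathematical obstacle here; the only thing that requires care is bookkeeping of the vertex and edge sets after the identification of labelled nodes in the product $HH'$, and ensuring the interpretation of $g_H g'_{H'}$ as a kernel on $[0,1]^{V(HH')}$ is consistent with the definitions. Once this bookkeeping is made explicit, the proposition follows immediately from Fubini.
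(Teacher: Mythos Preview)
Your proposal is correct and matches the paper's approach exactly: the paper states that the result ``is not hard to show'' and omits the proof, but the (commented-out) argument in the source unfolds both sides and applies Fubini precisely as you do, with the same bookkeeping of unlabelled vertex sets $V_0, V_0'$ and the factorization of the integrand.
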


\noindent Here, $g_H g'_{H'} : [0,1]^{V(HH')} \to \R$ denotes the
function $g_H((x_i)_{i \in V(H)}) g'_{H'}((x_i)_{i \in V(H')})$.

\comment{
\begin{proof}
Let $V_0$ and $V'_0$ be the unlabelled vertices of $H$ and $H'$
respectively. Then,
\begin{align*}
&\ t_{\bf x}(H,f,g_H)t_{\bf x}(H',f,g'_{H'})\\
= &\ \left( \int_{[0,1]^{V_0}} g_H((x_i)_{i \in V(H)}) \prod_{e \in E(H)}
f(x_{e_s}, x_{e_t})\ \prod_{i \in V_0} d x_i \right) \ \times\\
& \quad \left( \int_{[0,1]^{V'_0}} g'_{H'}((x_i)_{i \in V(H')}) \prod_{e
\in E(H')} f(x_{e_s}, x_{e_t})\ \prod_{i \in V'_0} d x_i \right)\\
= &\ \int_{[0,1]^{V_0} \times [0,1]^{V'_0}} g_H((x_i)_{i \in V(H)})
g'_{H'}((x_i)_{i \in V(H')}) \prod_{e \in E(HH')} f(x_{e_s}, x_{e_t})\
\prod_{i \in V_0} d x_i \prod_{i \in V'_0} d x_i \\
= &\ t_{\bf x}(HH',f,g_H g'_{H'}).
\end{align*}
\end{proof}
}

\begin{remark}
One can take (higher) G\^ateaux derivatives of $t_{\bf x}(H,f,g_H)$ along
arbitrary directions, just as for ordinary homomorphism densities
$t(H,f)$. More precisely,
\begin{align}\label{weighted-der}
&\ d (t_{\bf x}(H,f,g_H);g)\notag\\
= &\ \sum_{e_1 \in E(H)} \int_{[0,1]^{V_0}} g_H((x_i)_{i \in V(H)})
g(x_{(e_1)_s},x_{(e_1)_t})\prod_{e \in E(H) \setminus \{ e_1 \}}
f(x_{e_s}, x_{e_t})\ \prod_{i \in V_0} d x_i.\notag\\
= & \sum_{e_1 \in E(H)} t_{\bf x}(H_{e_1},f;g_{e_1}),
\end{align}

\noindent where:
\begin{itemize}
\item $H_{e_1} \in \HH_{n-1,k}$ is obtained from $H \in \HH_{n,k}$ by
removing the edge $e_1 \in E(H)$ and then further removing all unlabelled
isolated nodes. \item The new weights are:
\[ g_{e_1}((x_i)_{i \in V(H_{e_1})}) := \int_{[0,1]^{V(H) \setminus
V(H_{e_1})}} g_H((x_i)_{i \in V(H)}) g(x_{(e_1)_s}, x_{(e_1)_t})\
\prod_{i \in V(H) \setminus V(H_{e_1})} d x_i. \]
\end{itemize}

Equation \eqref{weighted-der} implies that the linear span of $t_{\bf
x}(H,-,g_H)$ is closed under differentiation, whereas the span of the
unweighted (partially labelled multigraph) homomorphism densities is not.
Further note that $t_{\bf x}(H,-,g_H)$ is $C^0$ in the sense of
Definition \ref{DCnatf}. Therefore the functions $t_{\bf x}(H,-,g_H)$ are
G\^ateaux smooth.
\end{remark}
%}}}

%{{{1 Section 5.2 - Series
\subsection{Series}\label{Subseries}

In the previous subsection, we defined the weighted homomorphism
densities, simultaneously generalizing simple graph homomorphism
densities, multigraph homomorphism densities, and partially labelled
graph homomorphism densities.  In addition we found that the span of such
functions is closed under addition, multiplication, and differentiation.
In this subsection we develop the basic convergence properties of
infinite series of such functions. We show that such ``absolutely
convergent" series are closed under addition, multiplication, and
differentiation.  In addition, we use this theory to examine whether
right homomorphism densities can be expanded as formal series of
homomorphism densities.

\begin{definition}\label{Dseries}
For $k \ge 0$ an integer and ${\bf x} = ({\bf x}_1, \dots, {\bf x}_k) \in
[0,1]^k$, define the {\it weighted $(k, {\bf x})$-labelled power series}
to be the set of formal series of the form
\[ \sum_{n=0}^\infty \sum_{H \in \HH_{n,k}} t_{\bf x}(H,-,g_H). \]

\noindent The subset of {\it $(k, {\bf x})$-labelled power series}
consists of those formal series for which $g_H$ is constant for all $H$.
Given ${\bf x} \in [0,1]^\N$, a {\it (weighted) ${\bf x}$-labelled power
series} is a (weighted) $(k, ({\bf x}_1, \dots, {\bf x}_k))$-labelled
power series for some $k \geq 0$.
\end{definition}

We first discuss the structure of the set of such series.
The following result is straightforward.

\begin{prop}\label{Palgebra}
Given ${\bf x} \in [0,1]^\N$, the set of weighted ${\bf x}$-labelled
power series is a (unital) commutative graded $\R$-algebra, with termwise
addition, and with multiplication given by:
\[ (FG)(-) := \ \sum_{n \geq 0} \sum_{H \in \HH_{n,\max(k,k')}}  t_{\bf
x} (H,-, \sum_{(H_1, H_2) : H_1 H_2 = H} f_{H_1} g_{H_2}), \]

\noindent where $F(-) = \sum_{n \geq 0} \sum_{H \in \HH_{n,k}} t_{\bf
x}(H,-,f_H)$ and $G(-) = \sum_{n \geq 0} \sum_{H \in \HH_{n,k'}} t_{\bf
x}(H,-,g_H)$. The weighted $(k, {\bf x})$-labelled power series form an
increasing family of subalgebras in $k \geq 0$ with the same properties.
\end{prop}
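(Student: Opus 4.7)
The plan is to verify the axioms of a unital commutative graded $\R$-algebra term by term, leveraging the multiplicativity of weighted densities established in Proposition \ref{prop:dens_multiplicative} together with the algebra structure on $\Q_\N$. First I would check that the product formula is well-defined: for each fixed $H \in \HH_{n,\max(k,k')}$, the coefficient $\sum_{(H_1,H_2): H_1 H_2 = H} f_{H_1} g_{H_2}$ is a finite sum, since a factorization $H = H_1 H_2$ is determined by a partition of the edge set $E(H)$ and a distribution of the unlabelled nodes, of which there are finitely many (at most $n$ edges and at most $2n$ unlabelled vertices). Moreover $|E(H_1 H_2)| = |E(H_1)| + |E(H_2)|$, so the product preserves the grading by edge count.

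Next I would formally expand the product: writing
\[ F \cdot G = \sum_{n_1 \geq 0} \sum_{H_1 \in \HH_{n_1,k}} \sum_{n_2 \geq 0} \sum_{H_2 \in \HH_{n_2,k'}} t_{\bf x}(H_1,-,f_{H_1}) \cdot t_{\bf x}(H_2,-,g_{H_2}), \]
applying Proposition \ref{prop:dens_multiplicative} to each pair (after identifying $\Q_k, \Q_{k'} \hookrightarrow \Q_{\max(k,k')}$ by appending labelled isolated nodes, as in the construction of $\Q_\N$), and then regrouping by the product graph $H = H_1 H_2 \in \HH_{n_1+n_2,\max(k,k')}$ yields exactly the asserted formula. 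Commutativity follows from the commutativity of multiplication in $\Q_\N$ combined with the pointwise identity $f_{H_1} g_{H_2} = g_{H_2} f_{H_1}$ for the weight kernels. Associativity, distributivity, and distributivity over termwise addition reduce similarly to the corresponding properties in $\Q_\N$ and in the commutative algebra of bounded kernels on $[0,1]^{V(\cdot)}$.

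The unit element is the series supported on the empty-edge graph $E_k \in \HH_{0,k}$ (consisting of $k$ labelled isolated nodes) with constant weight $1$: indeed, $t_{\bf x}(E_k,f,1) = 1$ since all vertices are labelled and the empty product over $E(E_k)$ equals one. For the subalgebra statement, one notes that the inclusion $\Q_k \hookrightarrow \Q_{k+1}$ is an algebra map respecting the product $H_1 H_2$ (the new label $k+1$ is a common labelled isolated vertex), and that the associated weights pull back consistently; hence $(k,{\bf x})$-labelled weighted power series are closed under the operations and embed into the $(k+1,{\bf x})$-labelled ones.

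The main obstacle I anticipate is bookkeeping rather than genuine content: one must track the identifications $\Q_k \hookrightarrow \Q_{\max(k,k')}$ carefully so that the factorizations $H = H_1 H_2$ are counted in the correct label class, and confirm that the weight kernel $f_{H_1} g_{H_2}$ is interpreted on the correct vertex set $V(H_1 H_2) = V(H_1) \cup_{\text{labels}} V(H_2)$. Since everything is formal at the level of series, no convergence questions arise, and once the label conventions are pinned down the proof reduces to repackaging Proposition \ref{prop:dens_multiplicative} together with the algebra structure on $\Q_\N$ recalled at the start of Section~\ref{SqN}.
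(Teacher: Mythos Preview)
Your proposal is correct and follows essentially the same route as the paper's (suppressed) proof: verify that the operations $(H,H') \mapsto HH'$ on graphs and $(g_H,g'_{H'}) \mapsto g_H g'_{H'}$ on kernels are each commutative and associative, deduce the algebra axioms for the formal series from these, and identify the unit as $t_{\bf x}(H,-,1)$ for the unique graph $H \in \HH_{0,k}$. The paper in fact omits the proof entirely as straightforward, and your additional care about well-definedness (finiteness of the factorization sum) and the label bookkeeping under $\Q_k \hookrightarrow \Q_{\max(k,k')}$ is exactly the sort of detail that justifies the word ``straightforward.''
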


\noindent Note that the subspaces of unweighted power series form
subalgebras of the above algebras. Also note that in defining the
product, with a slight abuse of notation, we continue to denote the
weightings for $\max(k,k')$ by $f_{H_1}$ and $g_{H_2}$, but it is clear
what this means.

\comment{
\begin{proof}
We first discuss the multiplication operation. It is not hard to show
that the binary operations on functions and on $\HH'$, given respectively
by
%\begin{equation}\label{Ebinary}
$(g_H, g'_{H'}) \mapsto g_H g'_{H'}$ and $(H,H') \mapsto HH'$,
%\end{equation}
are each associative as well as commutative. This easily shows the
commutativity of the product. For associativity, given formal power
series $E,F,G$ with the obvious formal expansions, we compute using the
associativity of the aforementioned binary operations
%in \eqref{Ebinary}
that $(EF)G$ and $E(FG)$ both equal
\[ \sum_{n \geq 0} \sum_{H \in \HH_{n,\max(k_E,k_F,k_G)}} t_{\bf x} (H,-,
\sum_{(H_1, H_2, H_3) : H_1 H_2 H_3 = H} e_{H_1} f_{H_2} g_{H_3}), \]

\noindent where $k_E, k_F, k_G$ are part of the data of $E,F,G$
respectively.
Next, the addition, distributivity, existence of a unit $U$, and
$\R$-algebra structure are also easy to show. The unit is given by
$O_{\bf x} := t_{\bf x}(H,-,1)$, where $H$ is the unique graph in
$\HH_{0,k} = \HH_{0,0}$. Finally, the $n$th graded piece is precisely the
span of all $t_{\bf x}(H,-,g_H)$ for $H \in \HH_{n,k}$ (for all $k \geq
0$) and all $g_H$. The corresponding results for $(k, {\bf x})$-labelled
series now follow easily.
\end{proof}
}

We now study the convergence properties of formal power series. Given
Proposition \ref{Palgebra}, it suffices to study weighted $(k, {\bf
x})$-labelled power series for any fixed $k \geq 0$. Using the analogy to
monomials discussed in Section \ref{Sprelim}, we arrange weighted
$(k,{\bf x})$-labelled power series according to their ``degree", and say
that such a series \textit{converges at $f \in \WW$} if
\begin{equation*}
\sum_{n=0}^\infty \sum_{H \in \HH_{n,k}} t_{\bf x}(H,f,g_H) := \lim_{N
\to \infty} \sum_{n=0}^N \sum_{H \in \HH_{n,k}} t_{\bf x}(H,f,g_H)
\end{equation*}

\noindent exists. Similarly, a weighted $(k,{\bf x})$-labelled power
series \textit{converges absolutely at $f \in \WW$} if 
\begin{equation}\label{Eabsconv}
\sum_{n=0}^\infty \sum_{H \in \HH_{n,k}}t_{\bf x}(H,|f|,|g_{H}|) <
\infty.
\end{equation}

\noindent As for power series of one variable (in real analysis), we will
interchangeably use $F(-) = \sum_{n=0}^\infty \sum_{H \in \HH_{n,k}}
t_{\bf x}(H,-,g_H)$ and $F(f) = \sum_{n=0}^\infty \sum_{H \in \HH_{n,k}}
t_{\bf x}(H,f,g_H)$, which denote (respectively) the formal weighted
power series and the function that it defines.

We now define a family of distinguished subsets of $\WW$.

\begin{definition}\label{DWsubsets}
Given $I \subset \R$, define $\WW_I$ to be the set of all $f \in \WW$
with image in $I$, and
\begin{equation}
\Wlimit{r} := \bigcup_{0 \leq s < r} \Wopen{s} = \bigcup_{0 \leq s < r}
\Wclosed{s}, \qquad 0 < r \leq \infty.
\end{equation}
\end{definition}

We have the useful observation that if $I = [a,b]$ with $|a| \leq b$,
then a weighted $(k,{\bf x}$)-labelled power series $F$ is absolutely
convergent on $\WW_I$ if and only if $F$ is absolutely convergent on
$\WW_{[-b,b]}$. Moreover, $F$ is absolutely convergent on $\Wlimit{r}$ if
and only if $F$ is absolutely convergent on the constant graphons
$(0,r)$.

\begin{remark}\label{Rconv}
Consider the special case where $k = 0$, $g_H \equiv a_H$ is constant for
all $H$, and $a_H = 0$ if $H \notin \G$. It is clear by the theory of
rearrangements of series that if $\sum_n s^n \sum_{H \in \G_n} |a_H|$
converges then so does $\sum_{H \in \G} a_H t(H,f)$ for all $f \in \WW$
with image in $[-s,s]$. On the other hand, convergence at all constant
graphons need not guarantee convergence on all of $\WW$. For instance,
suppose $a_{K_3^{\coprod n}} = 2^{-n} = - a_{K_{1,3n}}$ for the triangle
$K_3$ and all (bipartite) star graphs $K_{1,3n}$ with $n \geq 1$, and all
other $a_H$ are zero. Then the corresponding power series is given by:
\begin{equation}\label{Eserieseg}
\sum_{n \in \N} 2^{-n} (t(K_3,f)^n - t(K_{1,3n},f)).
\end{equation}

\noindent It is clear that the series \eqref{Eserieseg} converges at all
$f \equiv s \in \R$. However, note that $t(K_3,K_2) = 0 < 2^{-3n} =
t(K_{1,3n},K_2)$ since $K_{1,3n}$ and $K_2 = K_{1,1}$ are bipartite while
$K_3$ is not. Hence the series \eqref{Eserieseg} diverges to $-\infty$ at
$s f^{K_2}$, for all $s \geq 2^{4/3}$.
\end{remark}

In light of Remark \ref{Rconv}, we introduce the following notation.

\begin{definition}\hfill
\begin{enumerate}
\item Given ${\bf x} \in [0,1]^k$ and $g_H$ a bounded measurable function
on $[0,1]^{|V(H)|}$, define the semi-norm
\begin{equation}
\|g_H\|_{1,{\bf x}} := \int_{[0,1]^{V_0}} |g_H| \prod_{i \in V_0} dx_i =
t_{\bf x}(H,1,|g_H|).
\end{equation}

\item Given a weighted power series $F(-) := \sum_{n \geq 0} \sum_{H \in
\HH_{n,k}} t_{\bf x}(H,-,g_H)$, define its {\it radius of convergence}
via:
\begin{equation}\label{Eradius}
R_F^{-1} := \limsup_{n \to \infty} \left( \sum_{H \in \HH_{n,k}}
\|g_H\|_{1,{\bf x}} \right)^{1/n}.
\end{equation}
\end{enumerate}
\end{definition}

We now establish several fundamental properties of weighted $(k,{\bf
x})$-labelled power series. The proofs combine standard analysis
arguments from \cite[Chapter 7]{Rudin:1964} while keeping track of the
combinatorics of multigraphs that arises in the present setting.
%To simplify the exposition, we defer the proofs of the next two results
%to Appendix \ref{Aseries}.

Our first result justifies the use of the name ``radius of convergence".

\begin{prop}\label{Pradius}
Every weighted power series $F(-) = \sum_n \sum_{H \in \HH_{n,k}} t_{\bf
x}(H,-,g_H)$ converges absolutely on $\Wlimit{R_F}$. Moreover, this
convergence is uniform on $\Wclosed{s}$ for any $0 \leq s < R_F$. For all
$r \in (R_F, \infty)$, there exist functions $f \in \Wlimit{r}$ such that
$F(f)$ is not absolutely convergent.
\end{prop}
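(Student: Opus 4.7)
The plan is to reduce the problem to the classical root test for real power series in one variable, via the uniform pointwise bound that for any $H \in \HH_{n,k}$ and any $f \in \WW$,
\begin{equation*}
t_{\bf x}(H, |f|, |g_H|) \le \|f\|_\infty^n \cdot \|g_H\|_{1, {\bf x}}.
\end{equation*}
This holds because the integrand factors as $|g_H|\,\prod_{e \in E(H)} |f(x_{e_s}, x_{e_t})|$, and each of the $n = |E(H)|$ edge factors is bounded pointwise by $\|f\|_\infty$. Set $b_n := \sum_{H \in \HH_{n,k}} \|g_H\|_{1, {\bf x}}$, which is a finite sum because $\HH_{n,k}$ is finite. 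The definition \eqref{Eradius} then reads $\limsup_n b_n^{1/n} = 1/R_F$, so the problem is entirely controlled by the scalar series $\sum_n s^n b_n$.

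For the first two conclusions, fix $0 \le s < R_F$ and $f \in \Wclosed{s}$. Summing the displayed bound over all $H \in \HH_{n,k}$ and then over $n$ gives
\begin{equation*}
\sum_{n \ge 0} \sum_{H \in \HH_{n,k}} t_{\bf x}(H, |f|, |g_H|) \le \sum_{n \ge 0} s^n b_n,
\end{equation*}
and the right-hand scalar series converges by the root test since $\limsup_n (s^n b_n)^{1/n} = s/R_F < 1$. This gives the absolute convergence condition \eqref{Eabsconv} at $f$; moreover the bound $\sum_{n \ge N} s^n b_n$ depends only on $s$ and tends to $0$ as $N \to \infty$, so the Weierstrass M-test yields uniform convergence of the partial sums on all of $\Wclosed{s}$. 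Taking a union over $s \in [0, R_F)$ then covers $\Wlimit{R_F}$.

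For the third conclusion, given $r > R_F$, I would simply test the series at a constant graphon. Choose $s \in (R_F, r)$; the constant function $f \equiv s$ lies in $\WW_{[-s,s]} \subset \Wlimit{r}$ and satisfies $|f| \equiv s$, so
\begin{equation*}
\sum_{n \ge 0} \sum_{H \in \HH_{n,k}} t_{\bf x}(H, |f|, |g_H|) = \sum_{n \ge 0} s^n b_n.
\end{equation*}
Since $s/R_F > 1$, one may pick $\epsilon > 0$ with $s(1/R_F - \epsilon) > 1$; by the definition of $\limsup$ there are infinitely many $n$ with $b_n \ge (1/R_F - \epsilon)^n$, and for these $n$ the terms $s^n b_n \ge (s(1/R_F - \epsilon))^n$ tend to $\infty$. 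Hence the scalar series diverges, and $F$ fails to converge absolutely at $f$.

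There is no serious obstacle here: the combinatorial complexity of partially labelled multigraphs is absorbed entirely into the scalar sequence $(b_n)$, after which the proof is the one-variable root test. The only subtle point is that the definition of absolute convergence at $f$ in \eqref{Eabsconv} evaluates $|g_H|$ rather than $g_H$; this is exactly what makes the constant-graphon test work in the divergence half, without needing to arrange sign cancellations across different $H$.
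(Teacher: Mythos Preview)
Your proof is correct and follows exactly the approach the paper indicates: the paper does not write out a proof of this proposition, merely noting that ``the proofs combine standard analysis arguments from \cite[Chapter 7]{Rudin:1964} while keeping track of the combinatorics of multigraphs,'' and your reduction to the one-variable root test via the bound $t_{\bf x}(H,|f|,|g_H|) \le \|f\|_\infty^n\,\|g_H\|_{1,{\bf x}}$ and the constant-graphon test for divergence is precisely that standard argument.
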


Our second result shows that convergent weighted $(k, {\bf x})$-labelled
power series are closed under addition, multiplication, and
differentiation.

\begin{theorem}\label{thm:weight_series}
Suppose $c,d \in \R$, 
\[ F(-) = \sum_{n \geq 0} \sum_{H \in \HH_{n,k}}  t_{\bf x}(H,-,g_H),
\qquad G(-) = \sum_{n \geq 0} \sum_{H \in \HH_{n,k}}  t_{\bf x}(H,-,g'_H)
\]

\noindent are two weighted $(k, {\bf x})$-labelled power series, and $R
:= \min(R_F, R_G)$ is positive. Then the following three properties hold.
\begin{enumerate}[(i)]
\item For all $f \in \Wlimit{R}$,
\begin{equation}
(c F + d G)(f) = \ \sum_{n \geq 0} \sum_{H \in \HH_{n,k}} t_{\bf
x}(H,f,cg_H + dg'_H)
\end{equation}
where the right-hand side is a weighted $(k, {\bf x})$-labelled power
series with
$R_{cF + dG}  \ge R$.

\item For all $f \in \Wlimit{R}$,
\begin{equation}
(FG)(f) = \ \sum_{n \geq 0} \sum_{H \in \HH_{n,k}}  t_{\bf x}( H, f,
\sum_{H_1 H_2 = H} g_{H_1} g'_{H_2})
\end{equation}
where the right-hand side is a weighted $(k, {\bf x})$-labelled power
series with $R_{FG}  \ge R$.

\item Moreover, $F$ is G\^ateaux-smooth on $\Wlimit{R_F}$. More
precisely, for all $f_0 \in \Wlimit{R_F}$,
\begin{equation}
d^m F(f_0; f_1, \dots, f_m) = \sum_{n \geq m} \sum_{H \in \HH_{n,k}} d^m
t_{\bf x}(H,-,g_H)(f_0; f_1, \dots, f_m), \qquad \forall f_1, \dots, f_m
\in \WW,
\end{equation}

\noindent where every summand on the right-hand side is a linear
combination of terms of the form $t_{\bf x}(H',-,g_{H'})$, and together
they form a weighted $(k, {\bf x})$-labelled power series such that
$R_{d^m F(-; f_1, \dots, f_m)} \ge R_F$.
\end{enumerate}
\end{theorem}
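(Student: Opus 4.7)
The plan is to reduce all three parts to analytic estimates on the $\|\cdot\|_{1,{\bf x}}$-norms of coefficients, using Proposition \ref{Pradius} to transfer absolute convergence from formal power series to pointwise identities of functions. Part (i) is essentially immediate from the linearity of $t_{\bf x}(H,f,-)$ in the weight, combined with the triangle inequality $\|cg_H+dg'_H\|_{1,{\bf x}}\le |c|\,\|g_H\|_{1,{\bf x}}+|d|\,\|g'_H\|_{1,{\bf x}}$; summing over $H\in\HH_{n,k}$ and applying the root formula \eqref{Eradius} gives $R_{cF+dG}\ge R$, and Proposition \ref{Pradius} then justifies termwise addition on $\Wlimit{R}$.

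For part (ii), the key input is Proposition \ref{prop:dens_multiplicative}: $t_{\bf x}(H_1,f,g_{H_1})\,t_{\bf x}(H_2,f,g'_{H_2})=t_{\bf x}(H_1H_2,f,g_{H_1}g'_{H_2})$. Since forming the graph product only identifies labelled vertices, the unlabelled vertex sets of $H_1$ and $H_2$ remain disjoint inside $H_1H_2$, and Fubini gives $\|g_{H_1}g'_{H_2}\|_{1,{\bf x}}=\|g_{H_1}\|_{1,{\bf x}}\|g'_{H_2}\|_{1,{\bf x}}$. The resulting Cauchy convolution bound
\begin{equation*}
\sum_{H\in\HH_{n,k}}\Big\|\sum_{H_1H_2=H}g_{H_1}g'_{H_2}\Big\|_{1,{\bf x}}\le\sum_{n_1+n_2=n}\Big(\sum_{H_1\in\HH_{n_1,k}}\|g_{H_1}\|_{1,{\bf x}}\Big)\Big(\sum_{H_2\in\HH_{n_2,k}}\|g'_{H_2}\|_{1,{\bf x}}\Big),
\end{equation*}
together with the standard root-test argument for Cauchy products, yields $R_{FG}\ge R$. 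Absolute convergence on $\Wlimit{R}$ from Proposition \ref{Pradius} then legitimizes the rearrangement establishing $F(f)G(f)=(FG)(f)$ pointwise.

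For part (iii), apply \eqref{weighted-der} termwise: the formal first derivative reads $\sum_n\sum_{H\in\HH_{n,k}}\sum_{e_1\in E(H)}t_{\bf x}(H_{e_1},-,g_{e_1})$, and boundedness of $f_1\in\WW$ gives $\|g_{e_1}\|_{1,{\bf x}}\le\|f_1\|_\infty\|g_H\|_{1,{\bf x}}$. Summing contributes an overall factor of at most $n\|f_1\|_\infty$, which is killed in the root since $\limsup n^{1/n}=1$, so the formal derivative series has radius at least $R_F$ and converges absolutely on $\Wclosed{s}$ for any $s<R_F$. To identify this formal derivative with the Gâteaux derivative, fix $f_0\in\Wlimit{R_F}$ and $f_1,\dots,f_m\in\WW$, choose $\epsilon>0$ with $\|f_0\|_\infty+\epsilon\sum_i\|f_i\|_\infty<R_F$, and observe that each summand $t_{\bf x}(H,f_0+\sum_i\lambda_if_i,g_H)$ is a polynomial in $(\lambda_1,\dots,\lambda_m)$ of degree $|E(H)|$, hence $C^\infty$ on all of $\mathbb{R}^m$. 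The Weierstrass $M$-test, applied with the above bound to the series and each of its partial derivatives on $|\lambda_i|\le\epsilon$, delivers uniform convergence of both, and a standard result then allows differentiation under the summation. This produces the smooth extension demanded by Definition \ref{DCnatf} and yields the stated formula for $dF$. Iterating, with each further derivative contributing a polynomial factor $n(n-1)\cdots(n-m+1)$ that does not affect the root limit, handles all higher Gâteaux derivatives.

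The main technical obstacle lies in this last step: one must show that the termwise-differentiated series really represents $d^m F$ inside the $C^n$-at-$f_0$ framework of Definition \ref{DCnatf}, which requires smooth extensions to all of $\mathbb{R}^m$ rather than only to the admissible cone. This is workable precisely because each summand is already polynomial in the parameter variables, so the extension is automatic term-by-term and the analytic content reduces to uniform convergence of the series and its partial derivatives on a neighborhood of the origin, which the radius-of-convergence estimates above supply.
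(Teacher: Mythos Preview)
Your proposal is correct and matches what the paper has in mind: the paper does not actually give a proof of Theorem~\ref{thm:weight_series}, stating only that ``the proofs combine standard analysis arguments from \cite[Chapter 7]{Rudin:1964} while keeping track of the combinatorics of multigraphs that arises in the present setting.'' Your argument is precisely such a fleshing-out: parts (i) and (ii) reduce to the triangle inequality and Cauchy-product bound on the seminorms $\|\cdot\|_{1,{\bf x}}$ (using Proposition~\ref{prop:dens_multiplicative} and Fubini for the factorization $\|g_{H_1}g'_{H_2}\|_{1,{\bf x}}=\|g_{H_1}\|_{1,{\bf x}}\|g'_{H_2}\|_{1,{\bf x}}$), and part (iii) is the Weierstrass $M$-test justification for term-by-term differentiation, with the polynomial-in-$\lambda$ structure of each summand supplying the $C^\infty$ extension required by Definition~\ref{DCnatf}.
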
\smallskip

\noindent {\bf Application 3: Right homomorphism densities.}

As an application of this work on series, we address a question posed by
Lov\'asz in his list of open problems. Namely, Lov\'asz asks in
\cite[Problem 16]{Lovasz:Open} if there is a way to find a formula for
right homomorphism densities $t(-,G)$ in terms of left homomorphism
densities $t(H,-)$.
A problem is that $t(-,G)$ is not continuous in the cut-norm.\footnote{In
fact, $t(-,G)$ is not even well defined. For example $n^2t(H,c) =
t(H[n],c)$ where $H[n]$ is the $n$-fold blowup of $H$, but $f^H =
f^{H[n]}$ for all $n$.}
The theory of right-convergence proposes several closely related natural
remedies (see \cite[Chapter 12]{Lovasz:2013}). The only proposal that is
a continuous function on graphons is the {\it overlay functional}
\begin{equation*}
C(U,W) := \sup_{\phi \in S_{[0,1]}} \int_{[0,1]^2} U(x,y)
W(\phi(x),\phi(y)) dx dy, \qquad U,W \in \WW.
\end{equation*}

\noindent (See \cite[Lemma 12.7]{Lovasz:2013}.) We now show that $C(U,W)$
cannot be expanded as an absolutely convergent sequence in general.  

\begin{prop}
There exists $W \in \W$ for which there is no sequence of constants $a_H$
such that the series $\sum_{H \in \HH} a_H t(H,-)$ is absolutely
convergent and equals $C(-,W)$ on $\W$.
\end{prop}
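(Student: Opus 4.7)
The plan is to show that any non-constant $W \in \W$, for instance $W = f^{K_2}$, furnishes a counterexample, by exploiting the tension between two different homogeneities: the positive homogeneity $C(cU,W) = c\,C(U,W)$ for $c \ge 0$ of the overlay functional, and the multiplicative scaling $t(H,cf) = c^{|E(H)|} t(H,f)$ of homomorphism densities. Together these will force almost all coefficients $a_H$ to vanish, leaving only a linear function of $f$ that cannot possibly equal $C(f,W)$ for non-constant $W$.

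First I will assume for contradiction that $\sum_{H \in \HH} a_H\, t(H, f)$ converges absolutely on $\W$ and equals $C(f, W)$ for every $f \in \W$. Fix $g \in \W$ and let $c$ range over $[0,1]$, so that $cg \in \W$. Since absolute convergence allows arbitrary reordering, and each $\HH_n$ is finite, I can regroup by $|E(H)|=n$ to obtain
\begin{equation*}
\sum_{n \ge 0} c^n\, b_n(g) \;=\; C(cg, W) \;=\; c\,C(g, W), \qquad b_n(g) := \sum_{H \in \HH_n} a_H\, t(H, g),
\end{equation*}
where the left-hand side converges absolutely on $[0,1]$ (being bounded in absolute value by $\sum_H |a_H|\, t(H, g) < \infty$). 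This is an equality of two convergent power series in $c$ on a neighborhood of $0$, so by uniqueness of power-series coefficients I conclude $b_0(g) = 0$, $b_1(g) = C(g, W)$, and $b_n(g) = 0$ for every $n \ge 2$.

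Next I exploit that the only element of $\HH_1$ is the single edge $K_2$, so $b_1(g) = a_{K_2}\, t(K_2, g) = C(g, W)$ for every $g \in \W$. Plugging $g \equiv 1$ yields $a_{K_2} = C(1, W) = \int_{[0,1]^2} W$, and then plugging $g = W$ and bounding the supremum below by the value at $\phi = \mathrm{id}$ gives
\begin{equation*}
\Bigl(\int W\Bigr)^{\!2} \;=\; a_{K_2}\, t(K_2, W) \;=\; C(W, W) \;\ge\; \int W^2.
\end{equation*}
For any non-constant $W$ this contradicts the strict Cauchy--Schwarz inequality $\int W^2 > (\int W)^2$, completing the argument. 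The choice $W = f^{K_2}$ then witnesses the proposition.

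The substantive (non-routine) step is the regrouping and power-series identification in the second paragraph: the single assumed identity $\sum_H a_H\, t(H,-) = C(-, W)$ must be upgraded, via the scaling trick, into a separate identity for each homogeneity degree, and the rigidity that forces the contradiction comes from this family of degree-wise identities rather than from the original single one. Everything else is a direct calculation once that identification is in hand.
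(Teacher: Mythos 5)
Your proof is correct, and it takes a genuinely different route from the paper's. The paper observes that positive homogeneity of $C(\cdot,W)$ in its first argument means $dC(-,W)(0;U)=C(U,W)$, which is not linear in $U$ for a suitable $W$; hence $C(-,W)$ fails to be $C^1$ at $0$, while any absolutely convergent weighted power series of homomorphism densities is G\^ateaux smooth by Theorem~\ref{thm:weight_series}. You instead avoid the smoothness machinery entirely: the same positive homogeneity, combined with the degree-$|E(H)|$ homogeneity $t(H,cg)=c^{|E(H)|}t(H,g)$, lets you regroup the absolutely convergent series by number of edges and identify coefficients of the resulting one-variable power series in $c$, concluding that only the $\HH_1$ term can survive and hence $C(g,W)=a_{K_2}\,t(K_2,g)$ for all $g$. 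Plugging in $g\equiv 1$ and $g=W$, and bounding the supremum defining $C(W,W)$ below by the $\phi=\mathrm{id}$ term, yields $\left(\int W\right)^2\ge \int W^2$, which Cauchy--Schwarz rules out for any non-constant $W$. What your approach buys is self-containedness and explicitness: it makes no appeal to the power-series smoothness results, it shows that \emph{every} non-constant $W$ works, and it identifies exactly what the putative expansion would force $C(\cdot,W)$ to be (a multiple of the edge density). The paper's version is shorter and fits the narrative of the section, but leans on the heavier Theorem~\ref{thm:weight_series} and proves existence of such a $W$ less constructively. One small point worth spelling out in your write-up: the power-series identity is established only for $c\in[0,1]$, so the coefficient extraction should be justified via one-sided limits at $0^+$ (or by noting that the radius of convergence is at least $1$, so both sides extend analytically to $(-1,1)$ and agree on an interval).
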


\begin{proof}
From the definitions, note that $dC(-,W)(0;U) = C(U,W)$ for all $U,W \in
\W$. Since $C(U,W)$ is not bilinear, there exists $W \in \W$ such that
$dC(-,W)(0;U)$ is not linear in $U$.  
Therefore it cannot be $C^1$ at 0.  If $C(-,W)$ were expandable in terms
of an absolutely convergent series of multigraph homomorphism densities
$t(H,f)$, then it would be G\^ateaux smooth by Theorem
\ref{thm:weight_series}, so such an expansion is impossible.
\end{proof}
%}}}

%{{{1 Section 5.3 - Taylor series
\subsection{Taylor series}\label{Staylor}
The Stone-Weierstrass type Theorem \ref{Stone} implies that the linear
span $\scrs_t$ of homomorphism densities $t(H,-)$ is a dense subalgebra
of $C(\W / \sim, \R)$. In this (non-constructive) sense, homomorphism
densities can be used to approximate continuous class functions on $\W$.
The goal of this subsection is to show how to write Taylor expansions for
smooth functions on $\W / \sim$ around $0$ in terms of homomorphism
densities. In addition, we provide sufficient conditions for the
convergence of the Taylor series of a function $F$ to the function $F$.
The main idea is that the argument for Theorem \ref{thm:main} shows that
the multilinear derivatives of smooth class functions can be represented
by those of $t(H,f)$. We now apply the above ideas to define Taylor
polynomials for smooth class functions and prove a Taylor's theorem for
them.

\begin{theorem}\label{thm:taylor}
Suppose $F : \W \to \R$ is a $C^n$ class function. Then:
\begin{enumerate}[(i)]
\item $F$ has a Taylor polynomial $P_n(f) := \sum_{m=0}^n
\frac{1}{m!} d^m F(0; f, f, \dots, f)$. The remainder is:
\begin{equation*}
R_n(f) := F(f) - P_n(f) = \frac{d^{n+1}F(c_f f;f, \dots, f)}{(n+1)!}
\end{equation*}

\noindent for some $c_f \in [0,1]$.

\item From $F$ one can uniquely define scalars $a_H$ for all $H \in
\HH_{\leq n}$, such that for all $f \in \bigcup_{\sigma \in
S_{[0,1]}}\sigma(\WW_{\bf p})$,
\begin{equation}\label{Emthder}
\frac{1}{m!} d^m F(0; f, f, \dots, f) = \sum_{H \in \HH_m} a_H t(H,f)
\end{equation}

\noindent for all integers $0 \leq m \leq n$.
If in addition the higher derivatives $\frac{1}{m!} d^m F(0; f, f,
\dots, f)$ are continuous in the $L^1$ topology, then
Equation \eqref{Emthder} holds for all $f \in \W$ and all integers $0
\leq m \leq n$.
\comment{
\[\frac{1}{m!} d^m F(0; f, f, \dots, f) = \sum_{H \in \HH_m} a_H t(H,f)
\]

\noindent for all integers $0 \leq m \leq n$ and all $f \in \W$.
}
\end{enumerate}
\end{theorem}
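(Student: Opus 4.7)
For part (i), the plan is to reduce to classical one-variable Taylor's theorem. Given $f \in \W$, the function $\phi(\lambda) := F(\lambda f) = \mathcal{F}_{0,f}(\lambda)$ is the $C^n$ auxiliary function from Definition \ref{DCnatf} restricted to $[0,1]$, and from the definition of the (higher) G\^ateaux derivative one checks inductively that $\phi^{(m)}(\lambda) = d^m F(\lambda f; f, \ldots, f)$ for $0 \leq m \leq n+1$. Applying the standard Lagrange-remainder Taylor theorem to $\phi$ on $[0,1]$ yields $c_f \in [0,1]$ with $\phi(1) = \sum_{m=0}^n \phi^{(m)}(0)/m! + \phi^{(n+1)}(c_f)/(n+1)!$, which is exactly the claimed formula for $P_n$ and $R_n$.

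For part (ii), the core step is to invoke the structure theory of Section \ref{section:cc}. For each $0 \leq m \leq n$, Proposition \ref{prop:Cn_consistent} gives $\Lambda_m := d^m F(0;-,\ldots,-) \in \lie{X}_m$, hence the associated vector $T_m(F) = C_m(\Lambda_m)$ lies in the space $LC_m$ of linearly consistent vectors. By Theorem \ref{thm:hom_prop}(iv), the family $\{T_m(t(H,-)) : H \in \HH_m\}$ is a basis of $LC_m$, so there exist unique scalars $a_H$ for $H \in \HH_m$ such that
\begin{equation*}
T_m(F) \;=\; \sum_{H \in \HH_m} a_H\, T_m(t(H,-)).
\end{equation*}
Theorem \ref{thm:upper}(iv) then lets us promote this coincidence of $C_m$-data to an equality of multilinear functionals on $\sigma(\WW_{\bf p}^m)$ for all $\sigma \in S_{[0,1]}$. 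Collecting the $a_H$ over $0 \leq m \leq n$ defines the desired scalars on $\HH_{\leq n}$, with uniqueness supplied by Corollary \ref{cor:whitney}.

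To obtain the stated identity after evaluating at $(f,f,\ldots,f)$, I would use Proposition \ref{prop:highergateaux}: for $H \in \HH_m$ one has $|E(H)| = m$, so the only surjections $\{1,\ldots,m\} \twoheadrightarrow E(H)$ are the $m!$ bijections, each contributing $t(H,f)$ when all $g_l = f$. Thus $d^m t(H,-)(0;f,\ldots,f) = m!\, t(H,f)$, and dividing by $m!$ converts the multilinear identity into
\begin{equation*}
\tfrac{1}{m!} d^m F(0;f,\ldots,f) \;=\; \sum_{H \in \HH_m} a_H\, t(H,f)
\end{equation*}
on all $f \in \bigcup_{\sigma} \sigma(\WW_{\bf p})$.

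Finally, the extension to all of $\W$ under $L^1$-continuity of the $m$th derivative is by density: Proposition \ref{prop:cont_func_char}(i) ensures the right-hand side is $L^1$-continuous, and $\bigcup_\sigma \sigma(\WW_{\bf p})$ is $L^1$-dense in $\W$ (as used in the proof of Theorem \ref{thm:main}), so the two continuous functions, agreeing on a dense set, agree everywhere. The most delicate step in the above is verifying that the vector identity in $LC_m$ really controls $\Lambda_m$ pointwise on $\sigma(\WW_{\bf p}^m)$ for every $\sigma$ (not just on tuples of constant graphons), which is precisely the content of Theorem \ref{thm:upper}(iv); once this is in hand the rest is bookkeeping.
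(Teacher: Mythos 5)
Your proof is correct and takes essentially the same route as the paper: part (i) reduces to one-variable Taylor via $\mathcal{F}(t) = F(tf)$, and part (ii) invokes Proposition \ref{prop:Cn_consistent} to land in $\lie{X}_m$, Theorem \ref{thm:hom_prop}(iv) for the basis of consistent vectors, and then $L^1$-density to pass from $\WW_{\bf p}$ to $\W$. You spell out a few intermediate steps the paper leaves implicit (Theorem \ref{thm:upper}(iv), the $m!$ computation from Proposition \ref{prop:highergateaux}, uniqueness via Corollary \ref{cor:whitney}), but the overall argument is the same.
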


\begin{proof}\hfill
\begin{enumerate}[(i)]
\item Define the function $\mathcal{F}(t) := F(tf)$ for a fixed direction
$f \in \W$. Then $\mathcal{F}$ is $C^n$ and this result follows from the
one variable Taylor's theorem for $\mathcal{F}$.

\item Since $F$ is a $C^n$ class function, we know by Proposition
\ref{prop:Cn_consistent} that for $0 \leq m \leq n$, $\frac{1}{m!} d^m
F(0; f_1, f_2, \dots, f_m)$ is a symmetric $S_{[0,1]}$-invariant
multilinear function.
Now using Theorem \ref{thm:hom_prop}(iv), we can determine a unique set
of coefficients $a_H$ such that Equation \eqref{Emthder} holds for all $f
\in \bigcup_{\sigma \in S_{[0,1]}}\sigma(\WW_{\bf p})$.
Now suppose the higher derivatives are continuous in the $L^1$ topology.
Recall by Proposition \ref{prop:cont_func_char}(i) that $\sum_{H \in
\HH_m} a_H t(H,f)$ is also continuous in the $L^1$ topology on $\W$. The
result now follows by the density of $\WW_{\bf p}$ in $\W$.
\end{enumerate}
\end{proof}

Thus, given a smooth class function $F : \W \to \R$, we can use Theorem
\ref{thm:taylor} to define an infinite Taylor series
\begin{equation}\label{Etaylor}
P(F)(-) := \sum_{m=0}^\infty \sum_{H \in \HH_m} a_H t(H,-),
\end{equation}

\noindent where
\[ \sum_{H \in \HH_m} a_H t(H,f) = \frac{1}{m!} d^m F(0; f, f, \dots, f),
\qquad \forall m \geq 0,\ f \in \bigcup_{\sigma \in S_{[0,1]}}
\sigma(\WW_{\bf p}). \]

\noindent Note that $P(F)$ is in fact a weighted power series as in
Definition \ref{Dseries} (with $m=0$ and $g_H \equiv a_H$ constant for
all $H \in \HH$).

Given a smooth function $F : \W \to \R$, a natural question to ask is if
the Taylor series defined above converges to $F$. Recall from the
one-variable Taylor theory that there exist nonzero smooth functions $F$
on $\R$, all of whose derivatives vanish at the origin (and so $F$ has
trivial Taylor polynomials). We now show that a similar phenomenon occurs
for graphons.
Namely, consider the function $F(f) := e^{-1/t(H,f)}$ for a finite simple
graph $H$ with at least one edge (with $F(f) := 0$ if $t(H,f) = 0$).
Then all higher G\^ateaux derivatives of $F$ vanish at the origin.
Indeed, fix a direction $g \in \W$ and set $\mathcal{F}(c) :=
e^{-1/t(H,cg)} = e^{-1/c^{|E(H)|}t(H,g)}$.  Now the G\^ateaux derivatives
of $e^{-1/t(H,f)}$ at $0$ in the $g$ direction are just one-sided
derivatives of $\mathcal{F}(c)$. Moreover, $\mathcal{F}(c) = e^{-A/c^n}$,
where $A = 1/t(H,g)$ and $n = |E(H)|$.  Taking the derivative yields
$\mathcal{F}'(c) = e^{-A/c^n} \frac{n}{c^{n+1}}$. Higher derivatives are
all of the form $\mathcal{F}^{(m)}(c) = e^{-A/c^n} R_m(c)$ where $R_m$ is
a rational function.  Moreover, in all cases $\mathcal{F}^{(m)}(0) = 0$
for all $m$. However, $\mathcal{F}$ is not the zero function.\medskip

We now provide a sufficient condition under which the Taylor series of a
smooth function $F$ converges to $F$.

\begin{theorem}\label{thm:analytic}
Suppose $F : \WW \to \R$ satisfies the following assumptions:
\begin{enumerate}
\item $F$ is G\^ateaux smooth, continuous in the cut-norm, and a class
function.

\item For all $\{ 0, 1 \}$-valued graphons $f \in \W$, the Taylor
polynomials
\begin{equation*}
P_n(f) := \sum_{m=0}^n \frac{1}{m!} d^m F(0; f, f, \dots, f)
\end{equation*}
converge to $F(f)$ as $n \to \infty$.

\item The power series $P(F)$ given by \eqref{Etaylor} is absolutely
convergent on $\W$.
\end{enumerate}

\noindent Then $a_H = 0$ for all $H \in \HH \setminus \G$, and
\[ F(f) = P(F)(f) = \sum_{m \geq 0} \sum_{H \in \G_m} a_H t(H,f), \qquad
\forall f \in \W. \]
\end{theorem}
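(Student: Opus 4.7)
I plan to proceed in two main steps: (a) show $F$ and $P(F)$ agree on $\{0,1\}$-valued graphons, collapse the representation of $P(F)$ there to one involving only simple-graph densities, and extend this identity to all of $\W$ via cut-norm density and continuity; and then (b) match Taylor coefficients at $0$ to force $a_H = 0$ for all non-simple $H \in \HH \setminus \G$.

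For step (a), evaluating the absolute convergence condition \eqref{Eabsconv} at $f \equiv 1 \in \W$ (where $t(H,1) = 1$ for every $H$) gives $\sum_{H \in \HH} |a_H| < \infty$. Since $|t(H,f)| \le 1$ for all $f \in \W$ and $H \in \HH$, the Weierstrass $M$-test shows that $P(F) = \sum_H a_H t(H,-)$ converges uniformly on $\W$. On $\{0,1\}$-valued graphons $f$, assumption (2) gives $P_n(f) \to F(f)$ while uniform convergence gives $P_n(f) \to P(F)(f)$; hence $F = P(F)$ on such graphons. Using $t(H,f) = t(H^{\mathrm{simp}}, f)$ for $\{0,1\}$-valued $f$ (as in the proof of Proposition \ref{prop:cont_func_char}(ii)) and interchanging summation (justified by absolute summability), we rewrite
\[
F(f) \;=\; \sum_{G \in \G} b_G\, t(G,f), \qquad b_G \;:=\; \sum_{\substack{H \in \HH \\ H^{\mathrm{simp}} = G}} a_H,
\]
on $\{0,1\}$-valued graphons, with $\sum_G |b_G| \le \sum_H |a_H| < \infty$. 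Each $t(G,-)$ with $G \in \G$ is cut-norm continuous, so the right-hand side is cut-norm continuous on $\W$; together with the cut-norm continuity of $F$ from (1) and cut-norm density of $\{0,1\}$-valued graphons in $\W$, this identity extends to all of $\W$.

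For step (b), I match two expressions for $\tfrac{1}{m!} d^m F(0; f, \dots, f)$. On the one hand, Theorem \ref{thm:taylor}(ii) supplies
\[
\tfrac{1}{m!} d^m F(0; f, \dots, f) \;=\; \sum_{H \in \HH_m} a_H\, t(H,f), \qquad f \in \bigcup_{\sigma \in S_{[0,1]}} \sigma(\WW_{\bf p}).
\]
On the other hand, the radius of convergence of $\sum_G b_G\, t(G,-)$ satisfies $R \ge 1$ (from $\sum |b_G| < \infty$), so Theorem \ref{thm:weight_series}(iii) justifies termwise differentiation of $F = \sum_G b_G\, t(G,-)$ at $0$; Proposition \ref{prop:highergateaux} then yields $\tfrac{1}{m!} d^m t(G,-)(0; f,\dots,f) = t(G,f)$ when $|E(G)| = m$ and $0$ otherwise, so that
\[
\tfrac{1}{m!} d^m F(0; f, \dots, f) \;=\; \sum_{G \in \G_m} b_G\, t(G,f).
\]
Comparing these two expressions on $\WW_{\bf p}$ and invoking the linear independence of $\{t(H,-) : H \in \HH_m\}$ from Corollary \ref{cor:whitney} forces $a_H = 0$ for $H \in \HH_m \setminus \G_m$ and $a_G = b_G$ for $G \in \G_m$, for every $m \ge 0$. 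Consequently $P(F) = \sum_{G \in \G} a_G\, t(G,-) = F$ on $\W$, as required.

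The main obstacle I anticipate is justifying the termwise differentiation of the series at the origin: this requires that the radius of convergence be strictly positive so Theorem \ref{thm:weight_series}(iii) applies, which is delivered by $\sum|b_G| < \infty$. A secondary subtlety is cut-norm density of $\{0,1\}$-valued graphons in $\W$, which holds because finite simple graphs are dense in $\W/\sim$ and their canonical graphon representatives are $\{0,1\}$-valued.
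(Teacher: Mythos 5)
Your proof is essentially correct and, despite some cosmetic differences, follows the same strategy as the paper: pass from the multigraph series $P(F)$ to a simple-graph series that is cut-norm continuous, identify it with $F$ via density of finite simple graphs, and then compare coefficients at the origin. The paper packages the coefficient comparison by forming $\ptild(F) := \sum_H a_H t(H^{\mathrm{simp}},-)$ (which, once regrouped, is exactly your $\sum_G b_G t(G,-)$) and invoking the uniqueness theorem \ref{thm:gen_whitney} for convergent power series; you instead inline that comparison by termwise differentiating via Theorem \ref{thm:weight_series}(iii), Proposition \ref{prop:highergateaux}, and Corollary \ref{cor:whitney}. Since Theorem \ref{thm:gen_whitney} (in the $k=0$, unweighted case) is proved by exactly this combination of ingredients, the two routes are not materially different, though yours has the small advantage of avoiding the forward reference to Section \ref{Suniqueness}.

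One step needs a touch more care. You write that on $\{0,1\}$-valued graphons, uniform convergence ``gives $P_n(f) \to P(F)(f)$.'' But $P_n(f) = \sum_{m \le n} \frac{1}{m!}\,d^m F(0;f,\dots,f)$, whereas the partial sums of $P(F)$ are $S_n(f) := \sum_{m \le n} \sum_{H \in \HH_m} a_H t(H,f)$; these agree only for $f \in \bigcup_{\sigma} \sigma(\WW_{\bf p})$ by Theorem \ref{thm:taylor}(ii), not for arbitrary $\{0,1\}$-valued $f$. The fix is the one the paper uses implicitly: restrict to $f = f^G$ for finite simple $G$, which are simultaneously $\{0,1\}$-valued and lie in $\WW_{\bf p}$, so that both $P_n(f^G) = S_n(f^G) \to P(F)(f^G)$ and $P_n(f^G) \to F(f^G)$ hold, and then invoke cut-norm density of the $f^G$ (rather than of all $\{0,1\}$-valued graphons) to extend the identity $F = \sum_G b_G t(G,-)$ to all of $\W$. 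With that adjustment your argument is complete.
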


\noindent In other words, the Taylor series of $F$ converges to $F$ on
all of $\W$.

\begin{proof}
Define the weighted power series
$\ptild(F)(-) := \sum_{m \geq 0} \sum_{H \in \HH_m} a_H
t(H^{simp},-)$,
where $H^{simp}$ is the simple graph obtained from $H$ by replacing each
set of repeated edges between a pair of vertices by one edge. Then
$\ptild(F)$ is also absolutely convergent on $\W$. Indeed, this is
equivalent to
$\sum_{m \geq 0} \sum_{H \in \HH_m} |a_H| t(H^{simp},1)$
converging and that follows from rearranging, by absolute
convergence, the terms of the convergent sequence
$\sum_{m \geq 0} \sum_{H \in \G_m} |a_H| t(H,1)$.

\noindent In addition, $\ptild(F)$ is continuous in the cut-norm because
it is a uniform limit of continuous functions on $\W$. Moreover,
$\ptild(F)(f) = P(F)(f)$ for all $\{ 0, 1 \}$-valued graphons $f \in \W$
since $t(H^{simp},f) = t(H,f)$ for such $f$.  By assumption, $F(f) =
P(F)(f)$ for $f$ a $\{ 0, 1 \}$-valued graphon, so $F(f) = \ptild(F)(f)$
for such $f$.  Since both $F$ and $\ptild(F)$ are continuous on $\W$ and
equal on all finite simple graphs, we conclude that they are equal on all
$f \in \W$.

Now by Theorem \ref{thm:taylor}(ii), $P(F)$ and $\ptild(F)$ have the same
G\^ateaux derivatives at $0$ along the directions in $\WW_{\bf p}$. We
will show in the next subsection (Theorem \ref{thm:gen_whitney}) that a
power series with a positive radius of convergence is uniquely determined
by its G\^ateaux derivatives at $0$ along the directions in $\WW_{\bf
p}$. We conclude that $\ptild(F) = P(F)$ as weighted power series.
\end{proof}
%}}}

%{{{1 Section 5.4 - Uniqueness and linear independence
\subsection{Uniqueness of Taylor series and linear independence}\label{Suniqueness}

In the previous section, we showed how to represent the Taylor series of
a smooth class function $F$ around $0$, in terms of homomorphism
densities. We also provided sufficient conditions for when such a Taylor
series expansion is absolutely convergent, and converges to $F$ on all of
$\W$. We left open the question of whether or not this expansion is
unique.  The next theorem shows that such an expansion is indeed unique.
In fact, we prove this is true for arbitrary $(k,{\bf x})$-labelled power
series when the entries of ${\bf x}$ are distinct. The main ingredient is
a generalization of the crucial linear independence result of Theorem
\ref{thm:hom_prop} to functions $t_{\bf x}(H,-)$.

\begin{theorem}\label{thm:gen_whitney}
Fix an integer $k \geq 0$ and a vector ${\bf x} \in [0,1]^k$ with
distinct entries. Suppose the formal series $\sum_{n=0}^\infty \sum_{H
\in \HH_{n,k}} t_{\bf x}(H,-,a_H)$ has a positive radius of convergence.
Then the coefficients $a_H$ for $\bigcup_{n=0}^\infty \HH_{n,k}$ are
uniquely determined by the function $F(f) := \sum_{n=0}^\infty \sum_{H
\in \HH_{n,k}} t_{\bf x}(H,f,a_H)$.  More precisely, the coefficients
$a_H$ can be recovered from the derivatives $d^nF(0;f, \dots, f)$ for $f
\in \WW_{\bf p}$ and $n \ge 0$.
\end{theorem}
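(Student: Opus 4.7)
The plan is to reduce the uniqueness claim to linear independence of the family $\{t_{\bf x}(H, \cdot)\}_{H \in \HH_{n,k}}$ on $\WW_{\bf p}$ for each fixed $n$, and then to prove that linear independence by extending the triangularity argument of Theorem~\ref{thm:hom_prop}(iii) to the labelled setting, using the distinctness of the entries of ${\bf x}$ in an essential way.

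\textbf{Step 1: Termwise differentiation isolates each degree.} By Theorem~\ref{thm:weight_series}(iii), $F$ is G\^ateaux smooth on $\Wlimit{R_F}$, and its G\^ateaux derivatives can be computed term by term. For $H \in \HH_{m,k}$, the map $f \mapsto t_{\bf x}(H, f, a_H) = a_H\, t_{\bf x}(H, f)$ is the restriction to the diagonal of a multilinear functional of $m$ arguments, so by Lemma~\ref{lem:multilinear_deriv} its $n$-th G\^ateaux derivative at $0$ along $(f,\dots,f)$ vanishes whenever $n \ne m$ --- for $n < m$ every term of the symmetric sum contains a $0$ argument, while for $n > m$ the polynomial has too low a degree --- and equals $n!\, a_H\, t_{\bf x}(H, f)$ when $n = m$. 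Summing over $H$ yields, for every $f \in \WW_{\bf p}$,
\[
\frac{1}{n!}\, d^n F(0; f, \dots, f) \;=\; \sum_{H \in \HH_{n,k}} a_H\, t_{\bf x}(H, f),
\]
so it suffices to prove that $\{t_{\bf x}(H, \cdot)\}_{H \in \HH_{n,k}}$ is linearly independent on $\WW_{\bf p}$.

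\textbf{Step 2: A triangular test matrix.} Because the entries of ${\bf x}$ are distinct, choose an integer $p$ large enough that $p \geq 2n + k$ and that there exists an injection $\iota : \{1, \dots, k\} \to \{1, \dots, p\}$ with ${\bf x}_i \in \bigl( (\iota(i) - 1)/p,\, \iota(i)/p \bigr]$ for all $i$ (perturbing $p$ if any ${\bf x}_i$ sits on a grid boundary). For each class $[H_0] \in \HH_{n,k}$, fix a representative $H_0$ and an injective embedding $\phi_0 : V(H_0) \hookrightarrow \{1, \dots, p\}$ with $\phi_0 \circ l_{H_0} = \iota$, and set
\[
f_{[H_0]} \;:=\; \sum_{e \in E(H_0)} e^p_{(\phi_0(e_s),\, \phi_0(e_t))} \;\in\; \WW_p.
\]
Running the cell-decomposition computation from the proof of Theorem~\ref{thm:hom_prop}(ii), now with the labelled vertices $l_H(i)$ of every test graph $H$ automatically pinned into cell $\iota(i)$ by the constraint $x_{l_H(i)} = {\bf x}_i$, produces
\[
t_{\bf x}(H, f_{[H_0]}) \;=\; p^{-|V_0(H)|}\, |\Surj_k(H, H_0)|,
\]
where $V_0(H) := V(H) \setminus l_H(\{1, \dots, k\})$ and $\Surj_k(H, H_0)$ is the set of label-preserving surjective node-and-edge morphisms $H \twoheadrightarrow H_0$.

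\textbf{Step 3: Triangularity and the main obstacle.} Partially order $\HH_{n,k}$ by declaring $H_0 \preceq H$ exactly when there is a label-preserving surjection $H \twoheadrightarrow H_0$. In this order the matrix $M_{H,[H_0]} := t_{\bf x}(H, f_{[H_0]})$ is triangular with strictly positive diagonal, because the identity on $H$ is always such a surjection $H \twoheadrightarrow H$; hence $M$ is non-singular, giving the required linear independence. The main obstacle is the distinctness hypothesis on ${\bf x}$: it is precisely what permits the injection $\iota$ and thereby lets us place each labelled vertex of a test graph into its own grid cell, which is what produces the diagonal triangularity of $M$. If two entries of ${\bf x}$ coincided, the pinning would collapse two labels into a single cell, nontrivial linear relations among the $t_{\bf x}(H, \cdot)$ could arise, and the argument would fail at exactly this point; the remaining combinatorial bookkeeping is a direct labelled analogue of the proof of Theorem~\ref{thm:hom_prop}.
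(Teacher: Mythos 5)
Your Step 1 is correct and matches the paper's reduction. The gap is in Step 2. You claim
\[
t_{\bf x}(H, f_{[H_0]}) \;=\; p^{-|V_0(H)|}\,|\Surj_k(H,H_0)|,
\]
but evaluating $t_{\bf x}(H,\cdot)$ at the single graphon $f_{[H_0]} = \sum_{e' \in E(H_0)} e^p_{(\phi_0(e'_s),\phi_0(e'_t))}$ is not the same as taking an $n$-th G\^ateaux derivative along the tuple of edge indicators. Writing $t_{\bf x}(H,-) = \Lambda(-,\dots,-)$ for the associated $n$-linear form, you are computing $\Lambda\bigl(\sum_{e'} e^p_{e'},\dots,\sum_{e'} e^p_{e'}\bigr)$, which expands as a sum over \emph{all} maps $\sigma : E(H) \to E(H_0)$, not only the surjective ones. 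So the correct identity is $t_{\bf x}(H, f_{[H_0]}) = p^{-|V_0(H)|}\,\hom_k(H,H_0)$, where $\hom_k$ counts all label-preserving node-and-edge morphisms. This breaks Step 3: the homomorphism-count matrix is not triangular under the surjection order. For a concrete failure (already with $k=0$, $n=2$), take $P_2$ the double edge and $A_3$ the path on three vertices; then $\hom(P_2,A_3) > 0$ and $\hom(A_3,P_2) > 0$, yet neither surjects onto the other, so no ordering of $\HH_{2,0}$ makes the matrix $\bigl(t(H,f_{[H_0]})\bigr)$ triangular. (In this small case the matrix happens to still be nonsingular, but that requires a separate argument -- e.g.\ M\"obius inversion over quotients -- which your proposal does not supply.)

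The surjection count and the triangular structure come precisely from \emph{polarizing} the $n$-linear form, that is, from taking the $n$-th G\^ateaux derivative at $0$ along the $n$-\emph{tuple} of individual edge indicators ${\bf g}(G) := \bigl(e^p_{(\phi(e_s),\phi(e_t))}\bigr)_{e\in E(G)}$, so that by Lemma~\ref{lem:multilinear_deriv}
\[
d^n t_{\bf x}(H,0;{\bf g}(G)) \;=\; \sum_{\pi \in S_n} \Lambda\bigl({\bf g}(G)_{\pi(1)},\dots,{\bf g}(G)_{\pi(n)}\bigr),
\]
which restricts the sum to \emph{bijective} (hence surjective) edge maps and yields $|\Surj_k(H,G)|/p^{|V_0|}$. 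This is exactly the paper's computation; your Step 1 already shows that these derivatives are recoverable from $F$ restricted to $\WW_{\bf p}$, so replacing the evaluation $t_{\bf x}(H, f_{[H_0]})$ by the derivative $d^n t_{\bf x}(H,0;{\bf g}(H_0))$ both repairs the formula and restores the triangularity argument. The use of distinctness of ${\bf x}$ to pin labelled vertices into disjoint cells is correct and is the same idea as in the paper.
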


\begin{proof}[Proof of Theorem \ref{thm:gen_whitney}]
By part (iii) of Theorem \ref{thm:weight_series} and Lemma
\ref{lem:multilinear_deriv}, note that
\[ \frac{1}{n!} (d^n F)(0; f, f, \dots, f) = \sum_{H \in \HH_{n,k}}
t_{\bf x}(H,f, a_H), \qquad \forall n \geq 0. \]

\noindent Therefore, the theorem follows from the following
generalization of Corollary \ref{cor:whitney}:\smallskip

{\it Let $n,k \geq 0$ be fixed integers, and $\bf{x}$ be a vector of $k$
distinct constants ${\bf x}_1, \dots, {\bf x}_k \in [0,1]$. Then the
functions $t_{\bf x} (H,-,1)$ are linearly independent for $H \in
\HH_{n,k}$.}\smallskip

To prove this claim, note that if the functions $t_{\bf x}(H_i,-) =
t_{\bf x}(H_i, -, 1)$ are linearly dependent, then so are $d^m t_{\bf
x}(H_i,-;g_1, \dots, g_m)$ for all $m$ and all choices of tuples ${\bf g}
= (g_1, \dots, g_m)$. Thus we will set $m=n$ and produce tuples $\{ {\bf
g}(G) : G \in \HH_{n,k} \}$ such that the matrix
\[ M := (( d^n t_{\bf x}(H,0; {\bf g}(G) ))_{H,G \in \HH_{n,k}} \]

\noindent is nonsingular. Indeed, recall that in the $k =0$ case, we
showed linear independence, in Corollary \ref{cor:whitney}, by picking the
${\bf g}(G)$ such that $d^n t(H,0; {\bf g}(G)) = T(t(H,-))_n(G,p)$. In
that case, the matrix $M$ was triangular.

We now generalize this argument to $\HH_{n,k}$. To that end, fix an
integer
\[ p > 2n+\frac{2}{\min_{1 \leq i < j \leq k} |{\bf x}_i - {\bf x}_j|} \]

\noindent with $p$ relatively prime to the denominators of any ${\bf
x}_i$ that are rational. Then $p$ is finite because the ${\bf x}_i$ are
distinct. We will now study the G\^ateaux derivatives of $t_{\bf x}(H,-)$
along the directions
\[ f = e^p_{(a,b)} = {\bf 1}_{\left( \frac{a-1}{p}, \frac{a}{p} \right]
\times \left( \frac{b-1}{p}, \frac{b}{p} \right]} + {\bf 1}_{\left(
\frac{b-1}{p}, \frac{b}{p} \right] \times \left( \frac{a-1}{p},
\frac{a}{p} \right]}. \]

\noindent By choice of $p$, the ${\bf x}_i$ lie in the interiors of
distinct intervals of the form $[a_i/p, (a_i+1)/p]$.

Now for every fixed $G \in \HH_{n,k}$, fix an injective map $\phi :
V(G) \to \{1, \dots, p\}$ such that for each labelled vertex $v \in
V(G)$, $\phi(v) := \lceil p {\bf x}_{l_G^{-1}(v)} \rceil$. This is
possible by choice of $p$. We now define the tuples ${\bf g}(G)$ by:
${\bf g}(G)_e := e^p_{(\phi(e_s), \phi(e_t))}$ for $e \in E(G)$. Then for
all $H \in \HH_{n,k}$,
\begin{align}\label{EMHG}
M(H,G) & := d^n t_{\bf x}(H,0;({\bf g}(G)_e)_{e \in E(G)}) \notag\\
&= \sum_{\sigma: E(H) \twoheadrightarrow E(G)} \int_{[0,1]^{V_0}}
\prod_{e \in E(H)} {\bf g}(G)_{\sigma(e)}(x_{e_s}, x_{e_t}) \prod_{i \in
V_0} dx_i,
\end{align}

\noindent where the last equality follows by Lemma
\ref{lem:multilinear_deriv}. (Note that the order of G\^ateaux
differentiation does not matter since mixed partials are equal.)

We now claim that $M(H,G) = |\Surj(H,G)|/p^{|V_0|}$, where $\Surj(H,G)$
is the set of node-and-edge maps from $H$ to $G$ (in the sense of
Definition \ref{Dgraph-map}) that are surjective. To prove the claim, let
$\tau' : V(H) \setminus V_0 \to \{1, \dots, p\}$ be defined for labelled
vertices $v \in V(H)$ by: $\tau'(v) := \lceil p x_v \rceil$. Consider an
arbitrary term in \eqref{EMHG}.  Then ${\bf g}(G)_e$ is constant on each
``sub-rectangle" in $[0,1]^{V_0}$ of size $1/p^{|V_0|}$, hence
\begin{align}\label{EMHG2}
&\ \int_{[0,1]^{V_0}} \prod_{e \in E(H)} {\bf g}(G)_{\sigma(e)}(x_{e_s},
x_{e_t}) \prod_{i \in V_0} dx_i \notag\\
= &\ \frac{1}{p^{|V_0|}} \sum_{\substack{\tau: V(H) \to \{1, \dots, p\}
\\ \tau \text{ extends } \tau'}} \quad \prod_{e \in E(H)}
{\bf g}(G)_{\sigma(e)}\left(\frac{\tau(e_s)-0.5}{p}, \frac{\tau(e_t)-
0.5}{p}\right).
\end{align}

By our choice of ${\bf g}(G)_e$ we have
\begin{equation*}
{\bf g}(G)_{\sigma(e)}\left(\frac{\tau(e_s)- 0.5}{p}, \frac{\tau(e_t)-
0.5}{p}\right) =
\begin{cases}
1 & \text{ if } \{\tau(e_s), \tau(e_t)\} = \{ \phi(\sigma(e)_s),
\phi(\sigma(e)_t)\},\\
0 &\ \text{otherwise}.
\end{cases}
\end{equation*}

\noindent Note that the product of the above expression (over $e \in
E(H)$) is zero unless $\tau(V(H)) \subset \phi(V(G))$.
Therefore, the right side of Equation \eqref{EMHG2} can be written as
\begin{equation*}
\frac{1}{p^{|V_0|}}\sum_{\substack{\tau: V(H) \to \phi(V(G))\\ \tau
\text{ extends } \tau'}} \quad \prod_{e \in E(H)} {\bf
g}(G)_{\sigma(e)}\left(\frac{\tau(e_s)- 0.5}{p}, \frac{\tau(e_t)-
0.5}{p}\right).
\end{equation*}

\noindent Since $\phi$ is injective, we can define vertex maps
$\phi^{-1}\tau: V(H) \to V(G)$ for every such $\tau$. Note that
$\phi^{-1} \tau$ sends labelled vertices of $H$ to the corresponding
labelled vertices in $G$, because $\phi(v) := \lceil p {\bf
x}_{l_G^{-1}(v)} \rceil$ and $\tau$ extends $\tau'$.
We recognize the sum on the right hand side of Equation \eqref{EMHG2} to be
equal to $\frac{1}{p^{|V_0|}}$ times the number of vertex maps
$\phi^{-1}\tau: V(H) \to V(G)$ that form a map of multigraphs $H \to G$,
when combined with the edge map $\sigma: E(H) \twoheadrightarrow E(G)$.  
In addition, Equation \eqref{EMHG} sums over all surjective maps $\sigma:
E(H) \twoheadrightarrow E(G)$ so $M(H,G)$ $= |\Surj(H,G)|/p^{|V_0|}$ as
claimed.

Now note that $(M(H,G))_{H,G \in \HH_{n,k}}$ is triangular with nonzero
diagonal entries, when $\HH_{n,k}$ is partially ordered consistent with
the existence of surjections. Hence $(M(H,G))_{H,G \in \HH_{n,k}}$ is an
invertible matrix, which concludes the proof.
\end{proof}

\begin{remark}
We now explain more generally why several of the integral formulas we
have examined above, can be interpreted as combinatorial quantities.
Let $H$ be a $k$-labelled multigraph with unlabelled vertices $V_0$, and
say ${\bf x} \in [0,1]^k$ are fixed irrational numbers, such that $x_v =
{\bf x}_{l_H^{-1}(v)}$, as above. For fixed $p \in \N$, define $\tau'(v)
= \lceil p {\bf x}_{l_H^{-1}(v)} \rceil$ for labelled vertices $v$. Now
consider the expression
\begin{equation}\label{Ecomboformula}
\int_{[0,1]^{V_0}} \prod_{e \in E(H)} g_{e}(x_{e_s}, x_{e_t}) \prod_{i
\in V_0} dx_i
= \frac{1}{p^{|V_0|}}\sum_{\substack{\tau: V(H) \to \{1, \dots, p\},\\
\tau \text{ extends } \tau'}} \quad \prod_{e \in E(H)}
g_{e}\left(\frac{\tau(e_s)- 0.5}{p}, \frac{\tau(e_t)- 0.5}{p}\right)
\end{equation}

\noindent where $g_e = f^{G_{e}}$, and $G_e$ are simple graphs on the
vertex set $\{1, \dots, p\}$ for each edge $e \in E(H)$.

Just as in the proof of Theorem \ref{thm:gen_whitney}, 
\begin{equation*}
\prod_{e \in E(H)} g_{e}\left(\frac{\tau(e_s)-0.5}{p}, \frac{\tau(e_t)-
0.5}{p}\right) =
\begin{cases}
1 &\ \text{ if } \{\tau(e_s), \tau(e_t)\} \in E(G_e)\ \forall e \in
E(H),\\
0 &\ \text{ otherwise}.
\end{cases}
\end{equation*}

\noindent Therefore the quantity in Equation \eqref{Ecomboformula} is
equal to $\frac{1}{p^|V_0|}$ times the number of maps $\tau: V(H) \to
\{1, \dots, p\}$ with $\tau$ extending $\tau'$ such that $\{\tau(e_s),
\tau(e_t)\} \in E(G_e)$ for each $e \in E(H)$.

In the special case $G_e = G$ for a single graph $G$ (for all $e$), the
above analysis shows that $t(H,f^G) = \hom(H,G) / |V(G)|^{|V(H)|}$. A
similar formula can be obtained by extending this analysis to multigraphs
$G_e$, by weighting the edges of $G_e$ according to the multiplicity of
edges found.

Sometimes it is useful to consider multiple sets of graphs $\{ G_e : e
\in E(H) \}$ simultaneously. In particular, one can combinatorially
interpret the derivatives
\begin{align*}
& d^m t_{\bf x}(H,f^G;g_1,g_2, \dots, g_m) = \\
& \sum_{\substack{A \subset E(H)\\ |A| = m}} \ \sum_{\sigma : \{1, \dots,
m\}\twoheadrightarrow A} \int_{[0,1]^{V_0}} \prod_{l=1}^m
g_{l}(x_{\sigma(l)_s}, x_{\sigma(l)_t}) \prod_{e \in E(H) \setminus A}
f^G(x_{e_s},x_{e_t}) \prod_{i \in V_0} dx_i, \ \forall f^G, g_i \in
\WW_{\bf p}
\end{align*}

\noindent for $0 \leq m \leq n$, $0 \leq k$, $H \in \HH_{n,k}$. In this
paper we have specialized to the case $m = n$, for the proof of Theorem
\ref{thm:gen_whitney}. In that proof we picked the $G_e$ to each be a
single edge, and let the $G_e$ vary over all edges $e \in E(H)$ to form a
multigraph $G$. Since $G_e$ was a single edge, it forced $\{\tau(e_s),
\tau(e_t)\} = G_e$. The sum over all $\sigma$ in the above equation
allowed us to interpret this derivative in terms of the number of
surjective maps $: H \twoheadrightarrow G$. We were thus able to obtain
linear independence results about homomorphism densities in an analytic
way.
\end{remark}
%}}}

%{{{1 Section 5.5 - Application 4: Infinite quantum algebras
\subsection{Infinite quantum algebras}\label{Sinfq}

We now explore a question raised by Lov\'asz regarding the algebras
$\Q_k$. Lov\'asz asks in \cite[Problem 7]{Lovasz:Open} if it is possible
to extend the definition of $\Q_k$ to infinite sums of $k$-labelled
multigraphs. One answer to this question is to interpret $\Q_k$ as the
graded vector space $\bigoplus_{n \geq 0} \R^{\HH_{n,k}}$; then an
extension to infinite sums would simply be the larger space $\qhat_k :=
\prod_{n \geq 0} \R^{\HH_{n,k}}$.

Let $\qhat_{k,{\bf x}}$ denote the set of all weighted $(k,{\bf
x})$-labelled power series with constant coefficients $g_H \equiv a_H$
(for all $H$). Then note that $\qhat_{k,{\bf x}}$ is a unital
$\R$-subalgebra of the commutative algebra studied in Proposition
\ref{Palgebra}.
Moreover, the obvious map $: \qhat_k \to \qhat_{k,{\bf x}}$ sending the
tuple $\{ a_H : H \in \bigcup_n \HH_{n,k} \}$ to $\sum_{n \geq 0} \sum_{H
\in \HH_{n,k}} t_{\bf x}(H,-,a_H)$ is a vector space isomorphism.
Therefore $\qhat_k$ inherits an algebra structure from $\qhat_{k,{\bf
x}}$, which we note is independent of ${\bf x}$ and extends the algebra
structure on $\Q_k$.

\begin{remark}
Note that $\qhat_k$ can also be interpreted as the completion of the
topological graded vector space $\Q_k$ under the metrizable topology --
in fact, the translation-invariant metric -- defined by the grading. In
this topology, the subspaces $\bigoplus_{n \geq N} {\rm span}_\R
\HH_{n,k}$ -- which are in fact graded ideals -- form a fundamental
system of neighborhoods of $0$. Moreover, it is not hard to show that the
algebra operations on $\qhat_k$ (defined by Proposition \ref{Palgebra})
are continuous in this topology. In this sense the grading introduced in
this paper provides an algebraic candidate to Lov\'asz's question. This
candidate is essentially unique by the universality of completions.
\end{remark}

\begin{remark}\label{Rgraphalgebra}
One can show that the span $\Q_\N$ of $\HH'$ is a polynomial algebra
$\R[{\bf X}]$, whose set of generators ${\bf X}$ consists of all
(isomorphism classes of) connected multigraphs $H \in \HH'$ of one of two
kinds:
(1) $H$ contains exactly two vertices, both of which are labelled and
adjacent with a unique edge; or
(2) the subset of labelled vertices in $V(H)$ is independent, and
removing this subset (and all edges adjacent to it) from $H$ does not
disconnect the resulting induced sub-multigraph. Similarly for each $k
\geq 0$, $\Q_k = \R[{\bf X}_k]$ is also a polynomial algebra, with
generators ${\bf X}_k := {\bf X} \cap \Q_k$.
Note that the completion $\qhat_k$ can be identified with ``formal power
series" in ${\bf X}_k$.
\end{remark}

Having explored Lov\'asz's question algebraically, we now explore how to
apply analytical techniques to infinite series of algebras. Note that
this can be done for elements of $\Q_k$ by using the map $\alpha_{\bf
x}$. For instance if $k=0$, then by Corollary \ref{cor:whitney}, the map
$\alpha : \Q_0 \to \scrs_t$ is an algebra isomorphism that identifies
multigraphs with their homomorphism densities, which are functions on
$\W$ and hence amenable to analytical treatment.

Thus the immediate goal is to try extending the map $\alpha_{\bf x} :
\Q_k \to Func(\W,\R)$ to convergent $(k,{\bf x})$-labelled power series
in $\qhat_k$.

\begin{definition}
Given $0 < R \leq \infty$, define $\Q_{k,R}$ to be the set of formal
$(k,{\bf x})$-labelled power series of the form $\sum_{n=0}^\infty
\sum_{H \in \HH_{n,k}} t_{\bf x}(H,-,a_H)$ (for some ${\bf x} \in
[0,1]^k$) whose radius of convergence is greater than $R$. Now define
$\Q_{\N,R} := \bigcup_{k \geq 0} \Q_{k,R}$ for $0 < R \leq \infty$.
%, \qquad R \in (0,\infty]. \]
\end{definition}

\begin{remark}
Note that all values of ${\bf x} \in [0,1]^k$ yield the same set of power
series in $\Q_{k,R}$, since the radius of convergence defined in
\eqref{Eradius} does not depend on ${\bf x}$ if all $a_H$ are constant.
Also note that the $\Q_{k,R}$ constitute a two-parameter family of
commutative unital graded $\R$-algebras that is decreasing in $0 < R \leq
\infty$ and increasing in $k \geq 0$, by Theorem \ref{thm:weight_series}.
\end{remark}

We now have the following result which shows when infinite formal series
of graphs can be embedded into spaces of functions amenable to analytic
treatment.

\begin{theorem}\label{thm:alpha_injection}
Fix ${\bf x} \in [0,1]^\N$, and $0 <s < R \leq \infty$. The algebra map
$\alpha_{\bf x}: \Q_\N \to Func(\WW_{[-s,s]},\R)$ of $\R$-algebras can be
extended continuously to $\Q_{\N,R}$ (with respect to the topology of
$\bigcup_{k \geq 0} \qhat_k$, and uniform convergence in
$Func(\WW_{[-s,s]},\R)$).

Furthermore, $\alpha_{\bf x}$ is an embedding of $\R$-algebras if and
only if the $x_i$ are distinct.
\end{theorem}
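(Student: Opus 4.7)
The plan is to handle the two assertions separately: first construct the continuous extension of $\alpha_{\bf x}$ and verify it is an algebra homomorphism; second establish the injectivity characterization, which is the main content.

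For the extension, fix $F \in \Q_{k, R}$ with $F = \sum_{n=0}^\infty \sum_{H \in \HH_{n,k}} t_{\bf x}(H,-,a_H)$. The partial sums $F_N := \sum_{n=0}^N \sum_{H \in \HH_{n,k}} t_{\bf x}(H,-,a_H)$ lie in $\Q_k \subset \Q_\N$ and converge to $F$ in the grading topology, so any continuous extension must send $F$ to $\lim_N \alpha_{\bf x}(F_N)$. Since $R_F > R > s$, Proposition \ref{Pradius} guarantees that $\alpha_{\bf x}(F_N)$ converges uniformly on $\WW_{[-s,s]}$, and I take this uniform limit as $\alpha_{\bf x}(F)$. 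The algebra homomorphism property is immediate from parts (i) and (ii) of Theorem \ref{thm:weight_series}, which show that termwise addition and Cauchy-product multiplication of weighted power series correspond pointwise to addition and multiplication of the limit functions within the common radius of convergence. For continuity in the stated topologies, a net $F_\beta \to F$ in $\bigcup_k \qhat_k$ lying in $\Q_{\N, R}$ has matching initial degrees eventually, while the degree-$>N$ tails are uniformly small on $\WW_{[-s, s]}$ by Proposition \ref{Pradius}, yielding $\alpha_{\bf x}(F_\beta) \to \alpha_{\bf x}(F)$ uniformly.

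For the ``if'' direction of the second assertion, suppose the entries of ${\bf x}$ are distinct and $\alpha_{\bf x}(F) \equiv 0$ on $\WW_{[-s,s]}$ for some $F \in \Q_{\N, R}$. Choose $k$ so that $F \in \Q_{k, R}$. Since ${\bf x}_1, \dots, {\bf x}_k$ are distinct and $F$ has positive radius of convergence, Theorem \ref{thm:gen_whitney} applies and recovers each coefficient $a_H$ of $F$ from the G\^ateaux derivatives of $\alpha_{\bf x}(F)$ at $0$ along directions in $\WW_{\bf p}$. The vanishing of $\alpha_{\bf x}(F)$ forces every $a_H = 0$, so $F = 0$ and $\alpha_{\bf x}$ is injective.

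For the ``only if'' direction, assume ${\bf x}_i = {\bf x}_j$ for some $i \neq j$, and set $k := \max(i, j)$. I exhibit an explicit nonzero kernel element. Let $H_i, H_j \in \HH_{1, k}$ be the $k$-labelled multigraphs on vertex set $\{v_1, \ldots, v_k, u\}$ with label map $a \mapsto v_a$, where $H_i$ has the single edge $\{v_i, u\}$ and $H_j$ the single edge $\{v_j, u\}$. They are non-isomorphic as $k$-labelled multigraphs, since the unique labelled vertex of degree $1$ differs, so $H_i - H_j \neq 0$ in $\Q_k \subset \Q_{\N, R}$. Yet $\alpha_{\bf x}(H_i)(f) = \int_0^1 f({\bf x}_i, y)\, dy = \int_0^1 f({\bf x}_j, y)\, dy = \alpha_{\bf x}(H_j)(f)$ for every $f$, since ${\bf x}_i = {\bf x}_j$, so $H_i - H_j$ lies in the kernel. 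I anticipate the continuity clause in the extension to be the main bookkeeping obstacle, since the grading topology on $\bigcup_k \qhat_k$ controls only finite initial segments whereas uniform convergence requires tail control on $\WW_{[-s, s]}$; the restriction to $\Q_{\N, R}$ supplies exactly the uniform estimates needed, via Proposition \ref{Pradius}, to bridge the two topologies, while the injectivity reduces cleanly to Theorem \ref{thm:gen_whitney} and the above counterexample.
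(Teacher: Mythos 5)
Your proof follows the same route as the paper's: the extension is defined as the uniform limit of partial sums (via Proposition~\ref{Pradius}), the algebra-homomorphism property comes from parts~(i)--(ii) of Theorem~\ref{thm:weight_series}, injectivity when the entries of ${\bf x}$ are distinct comes from Theorem~\ref{thm:gen_whitney}, and when two labels coincide you exhibit an explicit nonzero element of the kernel. Your concrete kernel element $H_i - H_j$ (a single edge from a labelled vertex to an unlabelled one) is precisely the paper's ``swap the two coinciding labels'' idea made explicit, and it is correct, noting that $\HH_{1,k}$ forbids only \emph{unlabelled} isolated vertices, so your graphs do lie there.

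The one genuine gap is the claim of net-continuity. You assert that for a net $F_\beta \to F$ in $\Q_{\N,R}$ ``the degree-$>N$ tails are uniformly small on $\WW_{[-s,s]}$ by Proposition~\ref{Pradius}.'' Proposition~\ref{Pradius} controls the tail of \emph{one fixed} series; membership in $\Q_{\N,R}$ only says each $F_\beta$ individually has radius of convergence exceeding $R$, and gives no bound on coefficients that is uniform in $\beta$. For a concrete failure, take $F_n := n\, s^{-n}\, A_2^{\coprod n}$ (each a finite sum, so $R_{F_n} = \infty > R$). Then $F_n \to 0$ in the grading topology on $\bigcup_{k\ge 0} \qhat_k$, yet $\alpha_{\bf x}(F_n)(s) = n\, s^{-n}\cdot s^n = n \to \infty$, so $\alpha_{\bf x}(F_n)$ does not converge to $\alpha_{\bf x}(0) = 0$ uniformly (or even pointwise) on $\WW_{[-s,s]}$. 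Thus the extended map is not globally continuous between the stated topologies; the paper's terse remark ``the extension is continuous because Proposition~\ref{Pradius} guarantees uniform convergence of the series'' should be read as asserting only that the extension is well-defined by passing to uniform limits along the partial sums of each fixed $F$. Your first paragraph already establishes exactly that weaker, correct statement; the subsequent net-continuity paragraph should be dropped.
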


\noindent Note that continuously extending $\alpha_{\bf x}$ from $\Q_\N$
to $\Q_{\N,R}$ is equivalent to continuously extending $\alpha_{\bf x}$
from $\Q_k$ to $\Q_{k,R}$ for each $k \geq 0$.

\begin{proof}
The map $\alpha_{\bf x}$ can be extended by Proposition \ref{Pradius}
from $\Q_k$ to $\Q_{k,R}$.  The extension is continuous because
Proposition \ref{Pradius} guarantees uniform convergence of the series.
Now the first two parts of Theorem \ref{thm:weight_series} show that
$\alpha_{\bf x} : \Q_{k,R} \to Func(\WW_{[-s,s]},\R)$ is an algebra map
for each $k \geq 0$. The result for $\Q_{\N,R}$ follows by compatibility
across $k \geq 0$.

By Theorem \ref{thm:gen_whitney}, $\alpha_{\bf x}$ is injective when the
$x_i$ are distinct. Now if $H_0 \in \Q_{\N,R}$ is in $\ker \alpha_{\bf
x}$, then $H_0 \in \Q_{k,R}$ for some $k$, whence $H_0 = 0$.

Finally, assume that for ${\bf x} \in [0,1]^k$, there exist two $x_i$
that are equal.  Without loss of generality, we can assume that $x_1 =
x_2$. Consider now any $k$-labelled graph $H$ and the graph $H'$ that
swaps the vertex labelled 1 with the vertex labelled 2.  Then
$t_{\bf x}(H,f) = t_{\bf x}(H',f)$
for all $f \in \W$ so $\alpha_{\bf x}$ is not injective on $\Q_{k,R}$.
The result for $\Q_{\N,R}$ follows immediately.
\end{proof}

Note that the images of the maps $\alpha_{\bf x}$ are inter-related for
different ${\bf x}$ as follows. Given $k \in \N$ and ${\bf x} \in
[0,1]^k$, and $H \in \HH_{n,k}$,
\begin{equation*}
t_{\bf x}(H,f) =  t_{\sigma({\bf x})}(H,f^{\sigma^{-1}}) =:
\sigma(t_{\sigma({\bf x})}(H,-))(f), \qquad \forall \sigma \in S_{[0,1]},
f \in \WW.
\end{equation*}

\noindent Now if ${\bf x}$ and ${\bf y}$ both have pairwise distinct
elements, then for any $\sigma \in S_{[0,1]}$ such that $\sigma({\bf x})
= {\bf y}$, we get that $\sigma(\img \alpha_{\bf x}) = \img \alpha_{\bf
y}$. Thus, the image of the maps $\alpha_{\bf x}$ are the same up to the
action of $S_{[0,1]}$ on $Func(\WW_{[-s,s]}, \R)$, with $s$ as in Theorem
\ref{thm:alpha_injection}.\medskip
%}}}

\noindent {\bf Concluding remarks.}

It is now possible to concretely explain the analogy in Section
\ref{Sprelim} between homomorphism densities and monomials, with degree
the number of edges. Namely, using Remark \ref{Rgraphalgebra}, it is
clear that the set of (unlabelled) multigraphs $\HH$ spans the polynomial
algebra $\Q_0 = \R[{\bf X}_0]$ -- and hence serves as a family of
monomials in the generators ${\bf X}_0$, with degree given by the number
of edges. 
Now Theorem \ref{thm:alpha_injection} provides a canonical (up to the
$S_{[0,1]}$ action) way of embedding a subalgebra of infinite formal
series of $k$-labelled graphs into $Func(\WW_{[-s,s]}, \R)$, with $s$ as
in Theorem \ref{thm:alpha_injection}. The homomorphism densities $t(H,-)$
are simply the images of the monomials in ${\bf X}_0$, under the algebra
embedding $\alpha$.  

Additionally, we have found that our notion of degree in this polynomial
algebra interacts well with G\^ateaux differentiation.  Theorem
\ref{thm:main} shows us that degree $N$ homomorphism densities are
precisely the continuous class functions that vanish after taking $N + 1$
derivatives. All of this suggests that the $\alpha$ map is a good
starting point for further investigation into the analytic theory of
infinite quantum algebras.

\subsection*{Acknowledgements}
We would like to thank Professor Amir Dembo for valuable discussions.  We
would like to thank the anonymous referees for their useful comments and
suggestions that improved the paper.

%{{{1 Bibliography

%}}}

\vspace*{-2.15mm}
\end{document}